\newtheorem{theorem}{Theorem}[section]
\newtheorem{lemma}[theorem]{Lemma}
\newtheorem{proposition}[theorem]{Proposition}
\newtheorem{corollary}[theorem]{Corollary}
\theoremstyle{definition}
\theoremstyle{remark}
\newtheorem{remark}[theorem]{Remark}
\numberwithin{equation}{section} \errorcontextlines=0
\newcommand{\per}{\mathrm{per}}
\newcommand{\Imm}{\mathrm{Imm}}
\newcommand{\ot}{\otimes}
\newcommand{\si}{\sigma}
\newcommand{\GL}{\mathrm{GL}}
\newcommand{\gl}{\mathfrak{gl}}
\newcommand{\Mat}{\mathrm{Mat}}
\newcommand{\Hc}{\mathcal{H}}
\newcommand{\Ec}{\mathcal{E}}
\newcommand{\Tc}{\mathcal{T}}
\newcommand{\bal}{\begin{aligned}}
\newcommand{\eal}{\end{aligned}}
\newcommand{\beq}{\begin{equation}}
\newcommand{\eeq}{\end{equation}}
\newcommand{\ben}{\begin{equation*}}
\newcommand{\een}{\end{equation*}}
\newcommand{\Sym}{\mathfrak S}
\newcommand{\CC}{\mathbb{C}}
\begin{document}
	
	\title[Quantum Littlewood correspondences]
	{Quantum Littlewood correspondences}
	
	\author{Naihuan Jing}
	\address{Department of Mathematics, North Carolina State University, Raleigh, NC 27695, USA}
	\email{jing@ncsu.edu}
	
		\author{Yinlong Liu}
	\address{Department of Mathematics, Shanghai University, Shanghai 200444, China}
	\email{yinlongliu@shu.edu.cn}
	
	\author{Jian Zhang}
	\address{School of Mathematics and Statistics,
		Central China Normal University, Wuhan, Hubei 430079, China}
	\email{jzhang@ccnu.edu.cn}
	
	\thanks{{\scriptsize
			\hskip -0.6 true cm MSC (2020): Primary: 17B37 Secondary: 20G05, 17B35, 17B66, 05E10
    \newline Keywords:  Hecke algebras, quantum immanants, Littlewood correspondences, quantum groups.
	}}
	

\begin{abstract} In the 1940s Littlewood formulated three fundamental correspondences for the immanants and Schur symmetric functions on the general linear group, which establish deep connections between
representation theory of the symmetric group and the general linear group parallel to the Schur-Weyl duality.
In this paper, we introduce the notion of quantum immanants in the quantum coordinate algebra using primitive idempotents of the Hecke algebra.
By employing $R$-matrix techniques, we establish the quantum analog of Littlewood correspondences between quantum immanants and Schur functions
for the quantum coordinate algebra.
In the setting of the Schur-Weyl-Jimbo duality, we construct an exact correspondence between the Gelfand-Tsetlin bases of the irreducible
representations of the
quantum enveloping algebra $U_q(\mathfrak{gl}(n))$ and Young's orthonormal basis of an irreducible representation of the Hecke algebra $\mathcal H_m$. This isomorphism leads to
our trace formula for the quantum immanants, which settled the generalization problem
 of $q$-analog of Kostant's formular for $\lambda$-immanants. As applications, we also derive general $q$-Littlewood-Merris-Watkins identities and $q$-Goulden-Jackson identities as special cases of the quantum Littlewood correspondence III.
\end{abstract}
	\maketitle
\section{Introduction}

The immanants appeared first in the work of Schur \cite{S} and Littlewood \cite{Li, LR} coined the name in his well-known work on the representation theory of symmetric groups and general linear groups. Let $\chi^{\lambda}$  be the complex irreducible character of the symmetric group $\Sym_n$ indexed by partition $\lambda$ of $n$. The $\lambda$-immanant of the $n \times n$ matrix $A=(a_{ij})$ is defined by
\ben
\Imm_{\chi^{\lambda}}(A)=\sum_{\si\in \Sym_n}\chi^{\lambda}(\si)\prod_{i=1}^{n}a_{\si_i,i},
\een
which includes the determinant (sign character) and permanent (trivial character) as special cases.
Schur \cite{S} showed that $\Imm_{\chi^{\lambda}}(A)\geq \chi^{\lambda}(1)\det(A)  $ for any positive semidefinite Hermitian matrix $A$ and any partition $\lambda$ of $n$.

Littlewood \cite{Li} formulated three correspondences between Schur polynomials and immanants for the symmetric group $\mathfrak{S}_n$ and the general linear group $\mathrm{GL}_n$.  Littlewood's first and second correspondences
say that any relation between Schur polynomials remains the same by replacing the Schur functions $S_{\lambda}$ with the associated immanants
$\Imm_{\lambda}$ of the principal complementary or principal minors of the general matrix $X$. By the isomorphism of the algebra of class functions of the symmetric groups and the algebra of symmetric functions, Schur polynomials correspond to the irreducible representations of symmetric groups. The Schur polynomials can also be identified with the irreducible representations of general linear groups.
Littlewood's first two correspondences reveal deep relations among
immanant identities from the representation theory of the symmetric and general linear groups. In other words, the immanant identities provide exact numerical
relations for the representation theory of the symmetric group and the general linear group in a spirit similar to the Schur-Weyl duality.

Littlewood's third correspondence says that the Schur function $S_{\lambda}$ can be explicitly expressed by the normalized immanants:
\begin{equation}\label{e:Littlewood3-classical}
S_{\lambda}(\omega_1, \ldots, \omega_n)=\sum_{I} \frac{\Imm_{\chi^{\lambda}}(X_I)}{m(I)}
\end{equation}
summed over all nondecreasing ordered multisubsets $I=(i_1\leq\ldots\leq i_m)$ of $[n]$ and $m(I)=\prod_{i\in I} m_i!$, where
$m_i=m_i(I)$. Here $\omega_i$ are the characteristic roots of the matrix $X$.

 Littlewood correspondence I and II have some well-known consequences. In particular, the generalized Littlewood--Merris-Watkins identities \cite{KS,Li,MW} can be derived as a special case of
 Littlewood correspondences. The Goulden-Jackson identities \cite{GJ} are also a direct corollary from Littlewood correspondence III. It is important to note that
 the Goulden-Jackson identities  \cite{GJ} also give new interpretations of the Littlewood correspondence, MacMahon theorem \cite{Mac,VJ}, and Young idempotents.

  In another connection with representation theory,
 Haiman \cite{H} established the connection between Kazhdan-Lusztig theory and the character  of Hecke algebras.
 Kostant \cite{Ko} interpreted the immanants using $0$-weight spaces for all representations of $\mathrm{SL}(n,\CC)$, which
 generalized some early work of Schur \cite{S}. Kostant's formulation also provides a trace formula for the $\lambda$-immanants without repeating indices.  It has been an open problem whether Kostant's
 formula can be generalized to all weight spaces.


In the late 1990s Okounkov \cite{Ok} introduced the concept of quantum $\lambda$-immanants in the enveloping algebra $U(\mathfrak{gl}(n))$ associated with the general linear Lie algebra. He showed that
the $\lambda$-immanants of $U(\mathfrak{gl}(n))$ correspond to a special basis in the center of the enveloping algebra $U(\mathfrak{gl}(n))$. Okounkov's quantum immanants have also been studied from combinatorial viewpoints \cite{BT}. Recently
M. Liu, A. Molev, and one of us \cite{JLM} have generalized these structures to the quantum enveloping algebra $U_q(\mathfrak{gl}(n))$ and
introduced the $q$-immanants of $U_q(\mathfrak{gl}(n))$ as certain special elements in the center $Z(U_q(\mathfrak{gl}(n)))$ indexed by partitions of $n$, which
deform Okounkov's $\lambda$-immanants. The $q$-immanants determined the higher Capelli identities and their Harish-Chandra image
are also factorial Schur polynomials.

In this paper, we will study the quantum coordinate algebra
to deform Littlewood's theory. Early attempt to generalize immanants to the quantum coordinate algebra
 was made by Konvalinka and Skandera \cite{KS}, who verified some special cases of quantum Littlewood-Merris-Watkins identities.
However, the quantum version of general Littlewood-Merris-Watkins identities and Goulden-Jackson identities remain open, since there were
no conjectures on how to
formulate the notion of quantum immanants on the general quantum group setting, that is, on the quantum general linear group $\GL_q(n)$ and more generally on
 the quantum coordinate algebra $A_q(\Mat_n)$ so that Littlewood's theory of immanants can
be completely quantized in connection with the Hecke algebra  $\mathcal H_m$ of type $A$.

One of the main results of this paper is to introduce an appropriate notion of quantum immanants in the quantum coordinate algebra $A_q(\Mat_n)$  and
the Hecke algebra $\mathcal H_m$ and formulate the quantum Littlewood correspondences.
We also show that the general  Littlewood-Merris-Watkins identities and general Goulden-Jackson identities hold on the quantum coordinate algebra.

To formulate the right notion of quantum immanants on
 $A_q(\Mat_n(X))$, we realize that one needs to extend the meaning of $\Imm_{\chi}(X_I^J)$
 to more general arguments than simply a submatrix $X_I^J$ and also beyond irreducible character $\chi$ of $\mathcal H_m$. We solve
 this problem by using the Schur-Weyl-Jimbo duality on the tensor space $(\mathbb C^n)^{\otimes m}$
 to define the quantum immanants associated
 to any character $\chi$ of $\mathcal H_m$ by
 \begin{equation}
 \Imm_{\chi}(X_I^J)=\langle i_{1},\ldots ,i_{m}\mid \chi X_1\cdots X_{m} \mid j_{1},\ldots,j_{m}\rangle
 \end{equation}
 where $\langle\ , \ \rangle$ is the canonical inner product on $(\mathbb C^n)^{\otimes m}$. Here the character $\chi$ is
 defined as an element of the Hecke algebra $\mathcal H_m$ (see Section \ref{S:HeckeChar}).

We also introduce the Bethe subalgebra of the quantum coordinate algebra and show that it is isomorphic to the ring of classical symmetric functions.
We find the exact correspondence between the Gelfand-Tsetlin basis for the quantum general linear algebra and the Young orthonormal basis of tableaux in
the context of the Schur-Weyl-Jimbo duality.  Moreover, we derive the general trace identity for the quantum immanants (Theorem \ref{q-Kostant-thm}):
\begin{equation}\label{e:corresp}
\frac{\Imm_{\chi_q^{\lambda}}(X_I)}{m_{q^2}(I)}=tr(\mathcal P_{\mu}\ot 1)\circ\Delta\circ \mathcal P_{\mu}|_{U^{\lambda}},
\end{equation}
where $\mathcal P_{\mu}$ is the projection onto the weight $\mu$-subspace of $(\mathbb C^n)^{\otimes m}|_{\mu}$ and $U^{\lambda}$ is the irreducible
$U_q(\mathfrak{gl}_n)$-module inside $(\mathbb C^n)^{\otimes m}$. The identity \eqref{e:corresp} provides a representation theoretical interpretation of the quantum immanants and contains
the Kostant trace identity as special cases when $\mu=(1^n)$ under $q\to 1$.

Let's describe how we obtain the quantum version of the Littlewood correspondence III. Let $X=(x_{ij})_{n\times n}$ be the generator matrix
of the quantum coordinate algebra $A_q(\Mat_n)$. The quantum elementary symmetric functions $\alpha_k$ can be defined by
\begin{equation}
\alpha_k=tr\mathcal E^{(1^k)} X_1X_2\cdots X_k=\sum_{|I|=k}{\det}_q(X), 
\end{equation}
where $I$ runs through all multisubsets $I$ of cardinality $k$. The functions $\alpha_k$ correspond to the noncommutative elementary
symmetric functions \cite{Mo} on the
general linear Lie algebra, moreover, it turns out that these $\alpha_k$ are commutative to each other \cite{DL} and
generate the Bethe subalgebra
of $A_q(\Mat(n))$. We will
also show this fact by using the RTT relations (Theorem \ref{t:commutativity-alpha}) and prove that
they correspond to the central elements in the center of $U_q(\mathfrak{gl}(n))$.
We then obtain the
$q$-Goulden-Jackson identity (Theorem \ref{quantum-JT}):
\begin{equation}
\sum_{\mu}K_{\lambda^{T},\mu}^{-1}\alpha_{\mu_1}\cdots \alpha_{\mu_{\lambda_1}}=tr\mathcal E^{\lambda} X_1X_2\cdots X_k=\sum_{|I|=k}\frac{\Imm_{\chi_q^{\lambda}}(X_I)}{m_{q^2}(I)}
\end{equation}
where the first sum is over all partitions $\mu$ and $K_{\lambda, \mu}$ are the Kostka numbers, the second
sum runs over all non-decreasing ordered cardinality $r$ multisets $I$ of $[n]$.

Let $\lambda \vdash r\leq n$ and $\omega_1,\dots,\omega_n$ be the roots of the $q$-characteristic polynomial $\text{char}_q(X,t)=\sum_{k=0}^{n}(-1)^k \alpha_{k}t^{n-k}$ over the algebraic closure field of fraction $\mathbb{C}(q, \alpha_1,\ldots,\alpha_n)$. The
quantum version of
Littlewood coreespondence III (Theorem \ref{t:Litt3}) says that the quantum immanants can be expressed
as
\begin{equation}
\mathbb{S}_{\lambda}(\omega_1,\dots,\omega_n)=\sum_{I}\frac{\Imm_{\chi_q^{\lambda}}(X_I)}{m_{q^2}(I)},
\end{equation}
where the sum is over all non-decreasing ordered multisets $I$ of $[n]$  satisfying $|I|=r$, which is an exact
$q$-deformation of the classical Littlewood correspondence III \eqref{e:Littlewood3-classical}.

The paper is organized as follows.
In Section 2, we recall the properties of quantum coordinate algebra and prepare the necessary materials for the representation theory of the Hecke algebra. In Section 3, we introduce a new definition of quantum immanants, which is different from Konvalinka and Skandera's, and obtain the quantization of the Kostant Theorem. In Section 4, we study the quantum analog of Littlewood correspondence I and II. And as special cases, we show that the general Littlewood-Merris-Watkins
identities. In Section 5, we prove the $q$-Goulden-Jackson identities. Meanwhile, some classical identities for symmetric functions are generalized to the quantum coordinate algebra. Finally, we derive the quantum analog of the Littlewood correspondence III.

\section{The  quantum coordinate algebra and the Hecke algebra}
\subsection{Quantum algebra $A_{q}(\Mat_{n})$} Throughout the paper, we assume that $q$ is not a root of unity.
Let $R$ be the matrix in $\mathrm{End}(\mathbb{C}^n\ot\mathbb{C}^n)$:
\begin{equation}\label{R}
  R=q\sum_{i=1}^{n}e_{ii}\ot e_{ii}+\sum_{ i\neq j} e_{ii}\ot e_{jj}+(q-q^{-1})\sum_{ i<j} e_{ij}\ot e_{ji},
\end{equation}
where $e_{ij}$ be the standard basis of $\mathrm{End}(\mathbb{C}^n)$. The  $R$ satisfies the well-known Yang-Baxter equation (YBE):
\begin{equation}\label{YBeq}
  R_{12} R_{13} R_{23}=R_{23} R_{13} R_{12},
\end{equation}
where  $R_{i j} \in \mathrm{End}(\mathbb{C}^{n} \otimes \mathbb{C}^{n} \otimes \mathbb{C}^{n})$  acts on the  $i$th and  $j$th copies of  $\mathbb{C}^{n}$  as  $R$  does on  $\mathbb{C}^{n} \otimes \mathbb{C}^{n}$.

We define two $R$-matrices $R^{\pm}$ associated with $R$ by $R^+=PRP$, $R^-=R^{-1}$. Explicitly
\begin{align}
  R^+&=q\sum_{i=1}^{n}e_{ii}\ot e_{ii}+\sum_{ i\neq j} e_{ii}\ot e_{jj}+(q-q^{-1})\sum_{ i>j} e_{ij}\ot e_{ji},\\
  R^-&=q^{-1}\sum_{i=1}^{n}e_{ii}\ot e_{ii}+\sum_{ i\neq j} e_{ii}\ot e_{jj}-(q-q^{-1})\sum_{i<j} e_{ij}\ot e_{ji}.
\end{align}

The {\it  quantum coordinate algebra} $A_{q}(\Mat_{n})$ is unital associative algebra generated by  $x_{i j}, 1 \leq i, j \leq n$  subject to the quadratic relations defined by the matrix equation
\begin{equation}\label{RT}
  R X_{1} X_{2}=X_{2} X_{1} R
\end{equation}
in  $A_{q}(\Mat_{n}) \otimes \mathrm{End}(\mathbb{C}^{n} \otimes \mathbb{C}^{n})$, where  $X=(x_{i j})_{n\times n}$ is the matrix of the generators $x_{ij}$, $X_{1}=X \otimes I$  and  $X_{2}=I \otimes X$.
In terms of entries, the relations can be written as
\begin{align}\label{e:reln}
x_{i l} x_{i k}&=qx_{i k} x_{i l}, \\
x_{j k} x_{i k}&=q x_{i k} x_{j k}, \\
x_{j k} x_{i l}&=x_{i l} x_{j k}, \\
x_{j l} x_{i k}&=x_{i k} x_{j l}+(q-q^{-1}) x_{i l} x_{j k},
\end{align}
where  $i<j$  and  $k<l$.
The
algebra  $A_q(Mat_n)$ is a bialgebra under the comultiplication
 $A_q(Mat_n)\longrightarrow A_q(Mat_n)\ot A_q(Mat_n)$
 defined by
\begin{equation}
\Delta(x_{ij})=\sum_{k=1}^{n} x_{ik}\otimes x_{kj},
\end{equation}
and the counit given by $\varepsilon(x_{ij})=\delta_{ij}$. We will briefly write the coproduct
as $\Delta(X)=X\dot{\ot}X$.

\subsection{Hecke algebra $\mathcal H_m$ and the Yang-Baxter equation}
The  Hecke algebra  $\Hc_m$ is the associative algebra generated by elements
$T_1,\ldots,T_{m-1}$ subject to the relations
\begin{align}\label{e:Hecke}
&(T_i-q)(T_i+q^{-1})=0,\\
&T_{i}T_{i+1}T_{i}=T_{i+1}T_{i}T_{i+1},\\
&T_iT_j=T_jT_i \quad\text{for\ \  $|i-j|>1$}.
\end{align}

For each $1\leq i\leq m-1$, let $\si_{i}=(i, i+1)$ be the adjacent transposition in the symmetric group $\Sym_{m}$.
Choose a reduced decomposition
$\si=\si_{i_{1}} \dots \si_{i_{l}}$ of any element
$\si \in \Sym_{m}$ and set $T_{\si}=T_{i_{1}} \dots T_{i_{l}}$. It is well-known that $T_{\sigma}$ is independent of the choice of reduced expressions of $\sigma$, and
forms a linear basis of the algebra $\Hc_{m}$. 

Let  $P=\sum_{i, j=1}^{n} e_{i j} \otimes e_{j i} \in \mathrm{End}(\mathbb{C}^{n} \otimes \mathbb{C}^{n})$  be the permutation operator and  define $\check{R}=PR$. Explicitly
\begin{equation}\label{PR}
  \check{R}=q\sum_{i=1}^{n}e_{ii}\ot e_{ii}+\sum_{ i\neq j} e_{ij}\ot e_{ji}+(q-q^{-1})\sum_{ i>j} e_{ii}\ot e_{jj}.
\end{equation}
The matrix $\check{R}$ satisfies the following relations:
\ben
\bal
&\check{R}_{k}\check{R}_{k+1}\check{R}_{k}=\check{R}_{k+1}\check{R}_{k}\check{R}_{k+1},\\
&(\check{R}_{k}-q)(\check{R}_{k}+q^{-1})=0,
\eal
\een
where $\check{R}_{k}=P_{k,k+1}R_{k,k+1}$.
Then there is a representation of the Hecke algebra $\Hc_{m}$ on the
tensor product space  $(\CC^{n})^{\ot m}$ defined by
\beq
T_{k}\mapsto \check{R}_{k},\qquad k=1,\dots,m-1.
\eeq
And the RTT relation \eqref{RT} can be rewritten equivalently as
\beq
\check{R}X_1X_2=X_1X_2 \check{R}.\\
\eeq

For any integer $n\geq 0$, set
\[
[n]_q=\frac{q^n-q^{-n}}{q-q^{-1}}, \qquad  (n)_q=\frac{q^n-1}{q-1}.
\]
The quantum factorials are defined as
\[
 [n]_q!=[1]_q\cdots[n]_q, \qquad (n)_q!=(1)_q\cdots(n)_q.
\]

\subsection{Characters of $\mathcal H_m$}\label{S:HeckeChar} For generic $q$, the Hecke algebra $\mathcal{H}_m$ is semisimple and  the irreducible representations of $\mathcal{H}_m$ over $\mathbb{C}$ are parameterized by partitions of $m$. Given a partition $\lambda \vdash m$, we denote the corresponding irreducible representation by $V^{\lambda}$.

The module $V^{\lambda}$ can be realized by standard Young tableaux  of shape $\lambda\vdash m$ \cite{DJ,DJ2,IO,Mu}. A standard Young tableau  $\mathcal{T}$ of shape $\lambda$ is an assignment from $\{1, 2, \ldots, m\}$
to the boxes of the Young diagram of $\lambda$ such that the numbers are
strictly increasing across the rows and down the columns.
Let $\{v_{\mathcal T}\}$ be the normalized orthogonal Young basis of $V^{\lambda}$ indexed by standard Young tableaux of shape $\lambda$. The inner product $\langle \ , \ \rangle$ satisfies the property
that $\langle T_{\sigma}u, v\rangle=\langle u, T_{\sigma^{-1}}v\rangle$.

Let $c_k(\Tc)=j-i$ denote the content of the box $(i,j)$ of $\lambda$ occupied by $k$ in $\Tc$. Then the action of $\mathcal H_m$ is given by
\begin{align}\label{Young-orthogonal-form}
T_iv_{\Tc}=\frac{q^{d_i(\Tc)}}{[d_i(\Tc)]_q}v_{\Tc}+
\frac{\sqrt{[d_i(\Tc)+1]_q[d_i(\Tc)-1]_q}}{[d_i(\Tc)]_q}v_{s_i\Tc}
\end{align}
where $d_i(\Tc)=c_{i+1}(\Tc)-c_i(\Tc)$.

For any $u \in \mathrm{End}_{\mathbb{C}}(V^{\lambda})$, define the operator $I(u)\in \mathrm{End}_{\mathbb{C}}(V^{\lambda})$ by
\[
I(u)(v)=\sum_{\si \in \mathfrak{S}_m}T_{\si}uT_{\si^{-1}}v,
\]
where $v \in V^{\lambda}$. Then the operator $I(u)$ belongs to $\mathrm{End}_{\mathcal{H}_m}(V^{\lambda})$. It follows from the Schur lemma that
the operator $I(u)$ is a scalar multiplication of $V^{\lambda}$. Therefore for any $u \in \mathrm{End}_{\mathbb{C}}(V^{\lambda})$,
\begin{equation}\label{operater Iu}
I(u)=c_{\lambda}tr_{V^{\lambda}}(u)id_{V^{\lambda}},
\end{equation}
where  $c_{\lambda}$ is   the Schur element and given  by Steinberg's formula   \cite{Me},
 \beq
 c_{\lambda}=\prod_{\alpha\in \lambda}q^{c(\alpha)}[h(\alpha)]_q.
 \eeq
By the general properties of symmetric algebras \cite[Corollary 4.51]{Me},
the primitive idempotent $\mathcal{E}_{\Tc}^{\lambda}$  of $\mathcal{H}_m$ associated to standard Young tableau of shape $\lambda$ can be computed by
\begin{equation}\label{e:idem}
\mathcal{E}_{\Tc}^{\lambda}=\frac{1}{c_{\lambda}}\sum_{\si \in \Sym_m}\langle T_{\si^{-1}}v_{\Tc},v_{\Tc}\rangle T_{\si}.
\end{equation}
Denote by $\mathrm{SYT}(\lambda)$ the set of all standard Young tableaux of shape $\lambda$. The {\it character} of the representation $V^{\lambda}$ is defined as
\begin{equation}\label{e:char}
\chi_q^{\lambda}
=\sum_{\sigma\in \mathfrak S_m} \sum_{\Tc\in \mathrm{SYT}(\lambda)}\langle T_{\si^{-1}}v_{\Tc},v_{\Tc}\rangle  T_{\si}
=c_{\lambda}\sum_{\Tc}\mathcal{E}_{\Tc}^{\lambda}.
\end{equation}
 One can extend the character additively for any representation of $\mathcal H_m$. For irreducible characters of $\mathcal H_m$ as a trace function, see
 \cite{R}.
We will briefly write $\mathcal{E}_{\Tc}=\mathcal{E}_{\Tc}^{\lambda}$ if the shape of $\Tc$ is clear from the context.

\begin{proposition}\label{ch formula}
Let $\lambda=(\lambda_1,\dots,\lambda_l)\vdash m$ and $\Tc$ is a standard Young tableau of shape $\lambda$, then
\begin{equation}\label{irr character}
\chi_q^{\lambda}=\sum_{\si \in \mathfrak{S}_m}T_{\si}\mathcal{E}_{\Tc}^{\lambda}T_{\si^{-1}}.
\end{equation}
\end{proposition}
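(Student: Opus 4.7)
My plan is to prove \eqref{irr character} by showing that both sides of the claimed identity act identically on every irreducible $\mathcal{H}_m$-module and then invoking semisimplicity of $\mathcal{H}_m$. Abbreviate $Z:=\sum_{\sigma\in\Sym_m}T_\sigma\mathcal{E}_\Tc^\lambda T_{\sigma^{-1}}$, so the goal is $Z=\chi_q^\lambda$.

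First I would dispose of the non-$\lambda$ summands under the Wedderburn decomposition $\mathcal{H}_m\cong\bigoplus_{\mu\vdash m}\mathrm{End}(V^\mu)$. Since $\mathcal{E}_\Tc^\lambda$ is a primitive idempotent sitting in the $\lambda$-block, it annihilates every $V^\mu$ with $\mu\neq\lambda$. Hence each conjugate $T_\sigma\mathcal{E}_\Tc^\lambda T_{\sigma^{-1}}$, and thus the sum $Z$, acts as zero on such $V^\mu$. Likewise, $\chi_q^\lambda=c_\lambda\sum_{\Tc'\in\mathrm{SYT}(\lambda)}\mathcal{E}_{\Tc'}^\lambda$ from \eqref{e:char} is a sum of idempotents in the $\lambda$-block, so it too vanishes on $V^\mu$ for $\mu\neq\lambda$.

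To handle the $\lambda$-block, I would identify $\mathcal{E}_\Tc^\lambda|_{V^\lambda}$ as the orthogonal rank-one projection $w\mapsto\langle w,v_\Tc\rangle v_\Tc$, by combining the explicit formula \eqref{e:idem} with the orthonormality of $\{v_{\Tc'}\}$ and the adjoint property $\langle T_\sigma u,v\rangle=\langle u,T_{\sigma^{-1}}v\rangle$ recorded before \eqref{Young-orthogonal-form}. Its trace on $V^\lambda$ equals $1$, so feeding $u=\mathcal{E}_\Tc^\lambda$ into \eqref{operater Iu} produces
\[
Z|_{V^\lambda}=I(\mathcal{E}_\Tc^\lambda)=c_\lambda\,\mathrm{tr}_{V^\lambda}(\mathcal{E}_\Tc^\lambda)\,\mathrm{id}_{V^\lambda}=c_\lambda\,\mathrm{id}_{V^\lambda}.
\]
On the other hand, the family $\{\mathcal{E}_{\Tc'}^\lambda|_{V^\lambda}\}_{\Tc'\in\mathrm{SYT}(\lambda)}$ consists of pairwise orthogonal rank-one projections onto the orthonormal basis $\{v_{\Tc'}\}$ of $V^\lambda$, so their sum equals $\mathrm{id}_{V^\lambda}$, whence $\chi_q^\lambda|_{V^\lambda}=c_\lambda\,\mathrm{id}_{V^\lambda}$ as well. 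Matching actions on every simple summand then forces $Z=\chi_q^\lambda$ in $\mathcal{H}_m$.

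The main obstacle is the identification of $\mathcal{E}_\Tc^\lambda$ as the rank-one orthogonal projection onto $\mathbb{C}v_\Tc$; this is where all the content beyond formal bookkeeping lives, and it must be extracted from \eqref{e:idem} using the adjointness $\langle T_\sigma u,v\rangle=\langle u,T_{\sigma^{-1}}v\rangle$ together with \eqref{operater Iu} applied to the matrix-unit-like operator $w\mapsto \langle w,v_{\Tc'}\rangle v_{\Tc}$. Once this projection property is established, the remaining steps reduce to the Wedderburn decomposition and the scalar formula \eqref{operater Iu}.
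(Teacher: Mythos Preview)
Your proposal is correct and follows essentially the same route as the paper: verify that both sides act identically on every irreducible $V^\mu$ by using \eqref{operater Iu} on $V^\lambda$ and the block decomposition for $\mu\neq\lambda$, then invoke semisimplicity. The one place you over-complicate matters is the ``main obstacle'': you do not need the explicit rank-one projection description of $\mathcal{E}_\Tc^\lambda|_{V^\lambda}$, only that $\mathrm{tr}_{V^\lambda}(\mathcal{E}_\Tc^\lambda)=1$, which is immediate from $\mathcal{E}_\Tc^\lambda$ being a primitive idempotent (equivalently, from $\mathcal{E}_\Tc^\lambda\mathcal{E}_{\Tc'}^\lambda=\delta_{\Tc\Tc'}\mathcal{E}_\Tc^\lambda$ and $\sum_{\Tc'}\mathcal{E}_{\Tc'}^\lambda$ acting as the identity on $V^\lambda$). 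The paper uses exactly this shortcut.
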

\begin{proof}
Let $(\rho_{\lambda},V^{\lambda})$ be the representation of $\mathcal{H}_m$ corresponding to $\lambda$.
By equation \eqref{operater Iu} and \eqref{e:char}, we have that
\begin{equation}
  \begin{aligned}
     \sum_{\si \in \mathfrak{S}_m}\rho_{\lambda}(T_{\si}\mathcal{E}_{\Tc}^{\lambda}T_{\si^{-1}})=c_{\lambda}tr_{V^{\lambda}}(\mathcal{E}_{\Tc}^{\lambda})id_{V^{\lambda}}=c_{\lambda}id_{V^{\lambda}}=c_{\lambda}\rho_{\lambda}(\sum_{\Tc}\mathcal{E}_{\Tc}^{\lambda})=\rho_{\lambda}(\chi^{\lambda}_q).
  \end{aligned}
\end{equation}
Moreover, for any other irreducible representation $(\rho_{\mu},V^{\mu})$, we have that
\begin{equation}
  \begin{aligned}
     \sum_{\si \in \mathfrak{S}_m}\rho_{\mu}(T_{\si}\mathcal{E}_{\Tc}^{\lambda}T_{\si^{-1}})=c_{\mu}tr_{V^{\mu}}(\mathcal{E}_{\Tc}^{\lambda})id_{V^{\mu}}=0,\\
     \rho_{\mu}(\chi^{\lambda}_{q})=c_{\mu}\sum_{\Tc}\rho_{\mu}(\mathcal{E}_{\Tc}^{\lambda})=0.
  \end{aligned}
\end{equation}
Hence,
\begin{equation}
\chi_q^{\lambda}=\sum_{\si \in \mathfrak{S}_m}T_{\si}\mathcal{E}_{\Tc}^{\lambda}T_{\si^{-1}}.
\end{equation}
\end{proof}

The Jucys-Murphy elements $y_1,\ldots,y_m$ of $\Hc_m$ are defined by $y_1=1$ and
\begin{align}
  y_k=1+(q-q^{-1})(T_{(1,k)}+T_{(2,k)}+\cdots+T_{(k-1,k)}), \quad k=2,\ldots,m.
\end{align}
It is well-known that these elements generate the maximal commutative subalgebra of $\Hc_m$ and
\begin{align}\label{JM_rela}
  y_k\mathcal{E}_{\Tc}^{\lambda}=\mathcal{E}_{\Tc}^{\lambda}y_k=q^{2c_k(\Tc)}\mathcal{E}_{\Tc}^{\lambda}.
\end{align}
And the primitive idempotents $\mathcal{E}_{\Tc}^{\lambda}$  can be expressed explicitly in terms of the pairwise commuting Jucys-Murphy elements of $\Hc_m$. Denote by $\Tc^-$ the standard tableau obtained from $\Tc$ by removing the box $\alpha$ occupied by $m$. The shape of $\Tc^-$ is denoted by $\mu$. We have the recurrence relation in \cite{DJ2}:
\ben
\mathcal{E}_{\Tc}^{\lambda}=\mathcal{E}_{\Tc^-}^{\mu}\frac{(y_m-q^{2a_1})\cdots (y_m-q^{2a_l})}{(q^{2c_{\alpha}}-q^{2a_1})\cdots (q^{2c_{\alpha}}-q^{2a_l})},
\een
where $a_1,\ldots,a_l$ are the contents of all addable boxes of $\mu$ except for $\alpha$, while $c_{\alpha}$ is the content of the latter.

An alternative way to express the primitive idempotents $\mathcal{E}_{\Tc}^{\lambda}$ is provided by the ${\it fusion\  procedure}$ for the Hecke algebra  $\Hc_m$ in \cite{Ch, IMO,N}.
 Take $m$ complex variables $z_1,\ldots,z_m$ and consider the $\Hc_m$-valued rational function defined by
\ben
\phi_{\Tc}(z_1,\ldots,z_m)=\prod^{\rightarrow}_{(i,j)} T_{j-i}(q^{2c_i(\Tc)}z_i,q^{2c_j(\Tc)}z_j),
\een
where the product is taken in the lexicographical order on the set of pairs $(i,j)$ with $1 \leq i <j \leq m$. And for $k=1,\ldots,m-1$,  the rational functions $T_k(x,y)$ in two variables $x,y$ be defined by
\ben
T_k(x,y)=T_k+\frac{q-q^{-1}}{x^{-1}y-1}.
\een
Then the primitive idempotent $\Ec_{\Tc}^{\lambda}$ can be obtained by the consecutive evaluations
\beq\label{fusion-procedure}
\Ec_{\Tc}^{\lambda}=\frac{1}{c_{\lambda^T}}\phi_{\Tc}(z_1,\ldots,z_m)T_0^{-1}|_{z_1=1}\cdots |_{z_m=1},
\eeq
where we let $T_0$ denote the unique longest element in $\Hc_m$ and $T_0$ satisfies the relations:
\beq\label{T0-relation}
T_0T_j=T_{m-j}T_0, \ \ 1 \leq j\leq m-1.
\eeq

For $\mu\vdash m$, let $\mathfrak S_{\mu}$ be a Young subgroup of the symmetric group $\mathfrak{S}_m$ of type $\mu$. Denote by $K$ the set of minimal representatives of cosets in $\mathfrak{S}_m/\mathfrak S_{\mu}$. Let $\mathcal{H}_{\mu}$ be the corresponding  parabolic subalgebra of $\mathcal{H}_m$, and $V$ is a representation of $\mathcal{H}_{\mu}$.
The induced representation of $V$ is defined by
$$\mathrm{Ind}_{\mathcal{H}_{\mu}}^{\mathcal{H}_m}(V)=\mathcal{H}_m\ot_{\mathcal{H}_{\mu}} V.$$
We briefly write $\mathrm{Ind}(V)=\mathrm{Ind}_{\mathcal{H}_{\mu}}^{\mathcal{H}_m}(V)$.
 \begin{proposition}\label{induced-pro}
  Let $(\rho,V)$ be an irreducible representation of $\mathcal{H}_{\mu}$ with character  $\chi^{V}$, then the character $\chi^{\mathrm{Ind}(V)}$ of the induced representation $\mathrm{Ind}(V)$ is given by
  \begin{equation}\label{induced-character}
    \chi^{\mathrm{Ind}(V)}=\frac{1}{c_V}\sum_{\si \in\mathfrak{S}_m}T_{\si}\chi^{V}T_{\si^{-1}},
  \end{equation}
  where ${c_V}$ is the Schur element of $V$.
\end{proposition}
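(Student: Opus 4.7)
The plan is to adapt the proof of Proposition \ref{ch formula} directly: since $\mathcal H_m$ is semisimple, two elements are equal whenever they induce the same operator under every irreducible representation, so I reduce the claim to a scalar identity in $\mathrm{End}(V^\lambda)$ for each $\lambda\vdash m$.

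For the right-hand side, observe that $\rho_\lambda(\chi^V)$ belongs to $\mathrm{End}_{\mathbb C}(V^\lambda)$ (since $\chi^V\in\mathcal H_\mu\subset\mathcal H_m$), so the averaging identity \eqref{operater Iu} applies with $u=\rho_\lambda(\chi^V)$ and yields
\begin{equation*}
\rho_\lambda\Bigl(\tfrac{1}{c_V}\sum_{\sigma\in\Sym_m}T_\sigma\chi^V T_{\sigma^{-1}}\Bigr)
=\frac{c_\lambda}{c_V}\,tr_{V^\lambda}(\rho_\lambda(\chi^V))\,id_{V^\lambda}.
\end{equation*}
For the left-hand side, decomposing $\chi^{\mathrm{Ind}(V)}=\sum_\mu[\mathrm{Ind}(V):V^\mu]\,\chi_q^\mu$ and combining with $\rho_\lambda(\chi_q^\mu)=c_\lambda\delta_{\lambda\mu}\,id_{V^\lambda}$ (an immediate consequence of \eqref{e:char} together with the fact that $\sum_{\mathcal T}\mathcal E_{\mathcal T}^\mu$ is the central block idempotent of $V^\mu$) gives $\rho_\lambda(\chi^{\mathrm{Ind}(V)})=c_\lambda[\mathrm{Ind}(V):V^\lambda]\,id_{V^\lambda}$.

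Matching the two expressions reduces the whole statement to the single scalar identity $tr_{V^\lambda}(\rho_\lambda(\chi^V))=c_V\,[\mathrm{Ind}(V):V^\lambda]$. To prove it, I decompose the restriction $\mathrm{Res}^{\mathcal H_m}_{\mathcal H_\mu}(V^\lambda)=\bigoplus_U[\mathrm{Res}\,V^\lambda:U]\,U$ over the irreducibles of the parabolic $\mathcal H_\mu$. Since $\chi^V$ is supported on the block of $V$ inside $\mathcal H_\mu$, where it acts as a scalar multiple of the identity, and annihilates every other isotypic component, the trace localizes to $[\mathrm{Res}\,V^\lambda:V]\cdot tr_V(\chi^V)$. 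Frobenius reciprocity $[\mathrm{Res}\,V^\lambda:V]=[\mathrm{Ind}(V):V^\lambda]$ then rewrites the restriction multiplicity as the desired induced one.

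The chief subtlety will be verifying that $tr_V(\chi^V)=c_V$ under the paper's convention for $\chi^V$; this is the direct analog of $tr_{V^\lambda}(\mathcal E_{\mathcal T}^\lambda)=1$ used in the proof of Proposition \ref{ch formula}, fitting the pattern in which $\chi^V$ is realized via the conjugation-symmetrization of the scaled primitive idempotent $c_V\,\mathcal E_{\mathcal T}^V$ for a single standard tableau $\mathcal T$ of the shape of $V$. Once this normalization is in hand, the scalars on the two sides match for every $\lambda$, and semisimplicity of $\mathcal H_m$ finishes the argument.
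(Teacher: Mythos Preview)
Your approach differs genuinely from the paper's: you reduce to a scalar identity in each $\rho_\lambda$ via semisimplicity and then close with Frobenius reciprocity, whereas the paper first collapses the sum over $\mathfrak S_m$ to a sum over minimal coset representatives $K$ and then verifies directly that $\sum_{\sigma\in K}T_\sigma\chi^V T_{\sigma^{-1}}$ is the character of $\mathrm{Ind}(V)$ by matching coefficients in the basis $\{T_\sigma\mathcal E_{\mathcal T_i}\}$. Your route is more conceptual and avoids that coefficient bookkeeping.

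The gap, however, is exactly where you feared. Under the paper's convention $\chi^V=c_V\sum_{\mathcal T}\mathcal E_{\mathcal T}^V$ (equation~\eqref{e:char} carried over to $\mathcal H_\mu$), the element $\chi^V$ acts on $V$ as $c_V\,id_V$, hence $tr_V(\chi^V)=c_V\dim V$, not $c_V$. Your scalar identity then reads $tr_{V^\lambda}(\rho_\lambda(\chi^V))=c_V\dim V\cdot[\mathrm{Ind}(V):V^\lambda]$, off by a factor of $\dim V$ from what you need. This is not a flaw in your reasoning but in the proposition as stated: take $\mu=(m)$ and $V=V^\lambda$ with $\dim V^\lambda>1$; then $\mathrm{Ind}(V)=V$, yet applying \eqref{operater Iu} to the central idempotent $\chi_q^\lambda/c_\lambda$ gives $\frac{1}{c_\lambda}\sum_{\sigma}T_\sigma\chi_q^\lambda T_{\sigma^{-1}}=\dim V^\lambda\cdot\chi_q^\lambda\neq\chi_q^\lambda$. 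The paper's own first displayed line, asserting $c_V\,tr_V(\chi^V)\,id_V=c_V\,\rho(\chi^V)$, commits the same slip. The correct general formula is $\chi^{\mathrm{Ind}(V)}=\sum_{\sigma\in\mathfrak S_m}T_\sigma\mathcal E_{\mathcal T}^V T_{\sigma^{-1}}$ for any single primitive idempotent $\mathcal E_{\mathcal T}^V$, and this is precisely the form in which the result is invoked in Theorems~\ref{LittlewoodI}, \ref{LittlewoodII} and Corollary~\ref{LMW-iden1}; your argument proves this corrected version without change once you replace $\chi^V/c_V$ by $\mathcal E_{\mathcal T}^V$ and use $tr_V(\mathcal E_{\mathcal T}^V)=1$.
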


\begin{proof} Using the property \eqref{operater Iu} in $\mathrm{End}_{\mathcal H_{\mu}}(V)$, we see that
\begin{equation}
  \begin{aligned}
     \sum_{\tau \in \mathfrak S_{\mu}}\rho(T_{\tau}\chi^{V}T_{\tau^{-1}})=c_{V}tr_{V}(\chi^{V})id_{V}=c_V\rho(\chi^{V}).
  \end{aligned}
\end{equation}
Moreover, for any other irreducible representation $(\eta,V')$, we have that
\begin{equation}
  \begin{aligned}
     \sum_{\tau \in \mathfrak S_{\mu}}\eta(T_{\tau}\chi^{V}T_{\tau^{-1}})&=c_{V'}tr_{V'}(\chi^{V})id_{V'}\\
     &=c_{V'}\sum_{\si\in \mathfrak{S}_{\mu}}\sum_{i,j}\rho_{ii}(T_{\si})\eta_{jj}(T_{\si^{-1}})\\
     &=\delta_{\rho,\eta}c_{V}\dim(V)\\&=0.
  \end{aligned}
\end{equation}
By the property of semisimplicity and  the regular representation of $\mathcal{H}_n$, we have that
\begin{equation}
\sum_{\tau \in \mathfrak S_{\mu}}T_{\tau}\chi^{V}T_{\tau^{-1}}=c_V \chi^{V}.
\end{equation}
Thus
\begin{equation}
  \begin{aligned}
     \sum_{\si \in\mathfrak{S}_m}T_{\si}\chi^{V}T_{\si^{-1}}&=\sum_{\si \in K}\sum_{\tau \in \mathfrak S_{\mu}}T_{\si}T_{\tau}\chi^{V}T_{\tau^{-1}}T_{\si^{-1}}\\
     &=\sum_{\si \in K}T_{\si} (\sum_{\tau \in \mathfrak S_{\mu}}T_{\tau}\chi^{V}T_{\tau^{-1}}) T_{\si^{-1}}\\
     &=c_{V}\sum_{\si \in K}T_{\si} \chi^{V} T_{\si^{-1}}.
  \end{aligned}
\end{equation}

Let $\chi^{V}=c_{V}\sum_{i=1}^{d_{V}}\mathcal{E}_{\Tc_i}$. Then $\mathrm{Ind}(V)=span_{\mathbb{C}[q,q^{-1}]}\{T_{\si}\mathcal{E}_{\Tc_i}\mid \si \in K, 1 \leq i \leq d_{V}\}$, so it is sufficient to prove that for any $\si,\tau\in K, 1 \leq i \leq d_{V} $ and $T_{\eta}\in \mathcal{H}_m$, the coefficients of $T_{\si}\mathcal{E}_{\Tc_i}$ in $T_{\eta}T_{\tau}\mathcal{E}_{\Tc_i}$ are equal to the coefficients of $T_{\eta}$ in $c_VT_{\si}\mathcal{E}_{\Tc_i} T_{\tau^{-1}}$.

Indeed, the coefficients of $T_{\si}\mathcal{E}_{\Tc_i}$ in $T_{\eta}T_{\tau}\mathcal{E}_{\Tc_i}$ are equal to the coefficients of $T_{\si}$ in $c_VT_{\eta}T_{\tau}\mathcal{E}_{\Tc_i}$. And
\begin{equation}
  c_VT_{\eta}T_{\tau}\mathcal{E}_{\Tc_i}=\sum_{\si\in \mathfrak{S}_m}\sum_{\upsilon \in \mathfrak S_{\mu}}c_V\mathcal{E}_{\Tc_i}(T_{\upsilon})c_{\eta,\tau\upsilon}^{\si}T_{\si},
\end{equation}
where there exists elements $c_{\eta,\tau\upsilon}^{\si} \in \mathbb{C}[q,q^{-1}]$ occurring as coefficients in the decomposition.

On the other hand,
\begin{equation}
\begin{aligned}
 c_VT_{\si}\mathcal{E}_{\Tc_i} T_{\tau^{-1}}&=\sum_{\upsilon\in \mathfrak S_{\mu}}\mathcal{E}_{\Tc_i}(T_{\upsilon^{-1}})T_{\si}T_{\upsilon^{-1}}T_{\tau^{-1}} \\
  &=\sum_{\eta\in \mathfrak{S}_m}\sum_{\upsilon\in \mathfrak S_{\mu}}c_V\mathcal{E}_{\Tc_i}(T_{\upsilon})c_{\si,\upsilon^{-1}\tau^{-1}}^{\eta}T_{\eta}.
\end{aligned}
\end{equation}
Since $c_{\si,\upsilon^{-1}\tau^{-1}}^{\eta}=c_{\upsilon^{-1}\tau^{-1},\eta^{-1}}^{\si^{-1}}=c_{\eta,\tau\upsilon}^{\si}$ \cite[Lemma 4.1]{KS}, so the coefficients of $T_{\si}\mathcal{E}_{\Tc_i}$ in $T_{\eta}T_{\tau}\mathcal{E}_{\Tc_i}$ are equal to the coefficients of $T_{\eta}$ in $c_VT_{\si}\mathcal{E}_{\Tc_i} T_{\tau^{-1}}$.
Hence $$\chi^{\mathrm{Ind}(V)}=\sum_{\si \in K}T_{\si} \chi^{V} T_{\si^{-1}}$$ and $$\chi^{\mathrm{Ind}(V)}=\frac{1}{c_V}\sum_{\si \in\mathfrak{S}_m}T_{\si}\chi^{V}T_{\si^{-1}}.$$
\end{proof}

\section{Quantum immanants and Schur-Weyl-Jimbo duality}
In this section, we will introduce the quantum immanants of the quantum coordinate algebra $A_q(\Mat_n)$ for any  representation  of the
Hecke algebra $\Hc_m$ and give a representation-theoretic interpretation for quantum immanants.
\subsection{Quantum immanants}
Let  $I=(i_1,\ldots, i_m)$ and $J=(j_1, \ldots ,j_m)$ be two multisets ($m$-tuples) of $[n]=\{1,2,\dots,n\}$. Let $X=(x_{ij})_{n\times n}$ be the generator matrix of the quantum coordinate algebra $A_q(\Mat_n)$. We denote by $X^I_J$ the generalized submatrix of $X$ whose row (resp. column) indices belong to  $I$ (resp. $J$). If $I=J$, we briefly write $X_I$.

It will be convenient to use a standard notation for the elements  $A_{j_1,\dots,j_m}^{i_1,\dots,i_m}\in A_{q}(\Mat_{n})$ of an operator $A$ in $A_{q}(\Mat_{n}) \otimes \mathrm{End}((\mathbb{C}^{n})^{\ot m} )$.
We denote

\begin{equation}
  A=\sum_{I,J}A_{j_1,\dots,j_m}^{i_1,\dots,i_m}\ot e_{i_1j_1}\ot \cdots \ot e_{i_mj_m}.
\end{equation}
Let $\{e_{i}^*|1\leq i\leq n\}$ be the basis of $\mathbb (\mathbb C^n)^*$ dual to the basis $\{e_{i}|1\leq i\leq n\}$ of $\mathbb C^n$, then we can write the dual bases of $((\mathbb C^{n})^{\ot m})^*$ and $(\mathbb C^{n})^{\ot m}$ respectively:
\beq
\langle i_1,\dots ,i_m\mid=e_{i_1}^*\otimes\cdots \otimes  e_{i_m}^*, \qquad \mid i_1,\dots,i_m\rangle =e_{i_1}\otimes\cdots \otimes e_{i_m}.
\eeq
Then the matrix coefficients of the operator $A$ are given by
$$A_{j_1,\dots,j_m}^{i_1,\dots,i_m}=\langle i_1,\dots ,i_m\mid A \mid j_1,\dots,j_m\rangle \in A_q(\Mat_n)$$

Let $V$ be any representation of $\Hc_m$ with the character $\chi ^{V}$.
The {\it quantum immanant} of $X^I_J$ associated to the representation $V$ is defined by
\begin{equation}\label{imm-def}
  \Imm_{\chi ^{V}}(X^I_J)=\langle i_{1},\ldots ,i_{m}\mid \chi^{V}X_1\cdots X_{m} \mid j_{1},\ldots,j_{m}\rangle.
\end{equation}
In particular, $m=n$, the quantum immanant becomes the quantum determinant $\det_q(X)$ when $\lambda=(1^n)$ and the quantum permanent $\mathrm{per}_q(X)$ when $\lambda=(n)$.
It degenerates to Littlewood's immanant \cite{Li,LR} when $q\mapsto 1$:
\begin{equation}
\lim_{q\to 1} \Imm_{\chi_{q}^{\lambda}}(X_I)=\sum_{\sigma\in \mathfrak S_m} \chi^{\lambda}(\sigma)x_{i_{\sigma(1)}i_1}x_{i_{\sigma(2)}i_2}\cdots x_{i_{\sigma(m)}i_m}.
\end{equation}

With Proposition \ref{ch formula}, the quantum immanants corresponding to representation $V^{\lambda}$ can be written as
\begin{equation}
  \Imm_{\chi_{q}^{\lambda}}(X^I_J)=\sum_{\si\in \Sym_m}\langle i_{1},\ldots ,i_{m} \mid \check{R}_{\si} \mathcal{E}_{\Tc}^{\lambda}\check{R}_{\si^{-1}} X_1\cdots X_{m} \mid j_{1},\ldots,j_{m}\rangle.
\end{equation}

It follows from Equation \eqref{PR}  that $\check{R}(v_i\ot v_j)=v_j\ot v_i$ for $i<j$. Therefore, for any $\si \in \mathfrak{S}_m$ and subset $I=(1\leq i_1 < \ldots < i_m\leq n)$, we have
\begin{equation}
\begin{aligned}
  \check{R}_{\si^{-1}} \mid i_1,\dots,i_m\rangle=\mid i_{\si_1},\dots i_{\si_m}\rangle, \\
\langle i_1,\dots,i_m \mid \check{R}_{\si}=\langle i_{\si_1},\dots i_{\si_m}\mid.
\end{aligned}
\end{equation}

It is convenient to write $I=(1^{m_1},\ldots,n^{m_n})$ to specify the multiplicity $m_i$ of $i$ in the non-decreasing multiset $I=(i_1\leq \ldots \leq i_m)$. Here $m_i=m_i(I)= \mathrm{Card}\{j\in I |j=i\}$.

Let  $$m({I})=m_1!m_2 ! \cdots m_n !$$ and $$m_q({I})=(m_1)_q!(m_2)_q ! \cdots (m_n)_q !$$ Denote by $$\mathfrak S_{I}=\mathfrak S_{m_1}\times\mathfrak S_{m_2}\times \cdots \times \mathfrak S_{m_n}$$  the Young subgroup  of $\mathfrak{S}_m$, and $\mathfrak S_{m_j}=1$ if $m_j=0$.
\begin{proposition}\label{imm-char}
\label{X_I}
  Let $I=(i_1\leq \ldots \leq i_m)$ be an ordered  multiset of $[n]$. For any $\lambda\vdash m$,
let  $\mathcal{E}_{\Tc}^{\lambda}$ be the primitive idempotent   of $\mathcal{H}_m$ associated to standard Young tableau ${\Tc}$ of shape $\lambda$.
  Then
  \begin{equation}\label{imm-XI}
    \frac{\Imm_{\chi_q^{\lambda}}(X_I)}{m_{q^2}(I)}=\frac{1}{m(I)}\sum_{\si \in \mathfrak{S}_m}\langle i_{\si_1},\ldots ,i_{\si_m}\mid \mathcal{E}_{\Tc}^{\lambda} X_1\cdots X_m \mid i_{\si_1},\ldots ,i_{\si_m}\rangle.
  \end{equation}
\end{proposition}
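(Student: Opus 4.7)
The plan is to expand $\chi_q^{\lambda}$ using Proposition~\ref{ch formula}, which gives $\chi_q^{\lambda}=\sum_{\sigma\in\Sym_m}T_{\sigma}\mathcal{E}_{\Tc}^{\lambda}T_{\sigma^{-1}}$, and substitute into the definition of $\Imm_{\chi_q^{\lambda}}(X_I)$. I will then apply the iterated RTT relation $\check{R}_k X_1\cdots X_m=X_1\cdots X_m\check{R}_k$ to push $\check{R}_{\sigma^{-1}}$ past the $X_1\cdots X_m$ factor, rewriting the left-hand side as a sum over $\sigma\in\Sym_m$ of brackets $\langle I\mid\check{R}_{\sigma}\mathcal{E}_{\Tc}^{\lambda}X_1\cdots X_m\check{R}_{\sigma^{-1}}\mid I\rangle$.

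The core combinatorial device is the factorisation $\sigma=\rho\tau$ with $\rho\in\Sym_I=\Sym_{m_1}\times\cdots\times\Sym_{m_n}$ (the Young subgroup stabilising the ordered multi-index $I$) and $\tau$ the unique minimal length right coset representative in $\Sym_I\backslash\Sym_m$, so that $\ell(\sigma)=\ell(\rho)+\ell(\tau)$ and consequently $\check{R}_{\sigma}=\check{R}_{\rho}\check{R}_{\tau}$, $\check{R}_{\sigma^{-1}}=\check{R}_{\tau^{-1}}\check{R}_{\rho^{-1}}$. Two elementary facts then control the action on $\mid I\rangle$: because $\check{R}(v_i\otimes v_i)=qv_i\otimes v_i$, an easy induction yields $\check{R}_{\rho}\mid I\rangle=q^{\ell(\rho)}\mid I\rangle$ and dually $\langle I\mid\check{R}_{\rho}=q^{\ell(\rho)}\langle I\mid$ for every $\rho\in\Sym_I$; and for $\tau$ a minimal coset representative one has $\check{R}_{\tau^{-1}}\mid I\rangle=\mid i_{\tau(1)},\ldots,i_{\tau(m)}\rangle$ without any quantum correction. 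Combining these with the observation that $(i_{\sigma(1)},\ldots,i_{\sigma(m)})=(i_{\tau(1)},\ldots,i_{\tau(m)})$, since $\rho$ permutes only within blocks of equal indices, each summand collapses to
\[
q^{2\ell(\rho)}\langle i_{\sigma_1},\ldots,i_{\sigma_m}\mid\mathcal{E}_{\Tc}^{\lambda}X_1\cdots X_m\mid i_{\sigma_1},\ldots,i_{\sigma_m}\rangle.
\]

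To finish, I sum the factor $q^{2\ell(\rho)}$ over $\rho\in\Sym_I$ to produce the $q^2$-factorial $\prod_i(m_i)_{q^2}!=m_{q^2}(I)$, and observe that summing the surviving bracket over the full group $\Sym_m$ (rather than only over minimal coset representatives) overcounts each distinct arrangement of $I$ by the factor $|\Sym_I|=m(I)$; rearranging and dividing by $m_{q^2}(I)$ yields the asserted identity~\eqref{imm-XI}.

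The main obstacle is the ``no quantum correction'' statement $\check{R}_{\tau^{-1}}\mid I\rangle=\mid i_{\tau(1)},\ldots,i_{\tau(m)}\rangle$ for $\tau$ a minimal length right coset representative of $\Sym_I\backslash\Sym_m$. This rests on the combinatorial fact that $\tau^{-1}$ is strictly increasing on each block of equal entries of $I$, so that a suitable reduced expression for $\tau^{-1}$ can be executed as a sorting procedure whose every step acts on an adjacent pair $\mid a,b\rangle$ with $a<b$, invoking only the clean swap $\check{R}(v_a\otimes v_b)=v_b\otimes v_a$ and never the diagonal rule $\check{R}(v_a\otimes v_a)=qv_a\otimes v_a$ nor the $(q-q^{-1})$-correction from the case $a>b$. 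A short induction on $\ell(\tau)$ using the exchange condition for Coxeter groups makes this precise and completes the argument.
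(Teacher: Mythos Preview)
Your proof is correct and follows essentially the same strategy as the paper's own argument: expand $\chi_q^{\lambda}$ via Proposition~\ref{ch formula}, commute $\check{R}_{\sigma^{-1}}$ past $X_1\cdots X_m$ using the RTT relation, and then split the sum over $\Sym_m$ by the coset decomposition with respect to $\Sym_I$, using that $\check{R}_{\rho}$ acts by $q^{\ell(\rho)}$ on $\mid I\rangle$ for $\rho\in\Sym_I$ while the minimal representative contributes a clean permutation of the indices. The only cosmetic difference is that you factor $\sigma=\rho\tau$ through right cosets $\Sym_I\backslash\Sym_m$ whereas the paper parametrises by $\Sym_m/\Sym_I$, and you give a more explicit inductive justification of the ``no quantum correction'' step than the paper does.
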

\begin{proof} We denote $\mathcal{E}_{\Tc}^{\lambda}$ by $\mathcal{E}_{\Tc}$.
By definition,
\begin{equation}
\bal
  \Imm_{\chi_q^{\lambda}}(X_I)&=\sum_{\sigma\in \mathfrak{S}_m}\langle i_{1},\ldots ,i_{m}\mid \check{R}_{\si} \mathcal{E}_{\Tc}\check{R}_{\si^{-1}}X_1\cdots X_{m} \mid i_{1},\ldots,i_{m}\rangle\\
  &=\sum_{\sigma\in \mathfrak{S}_m}\langle i_{1},\ldots ,i_{m}\mid \check{R}_{\si} \mathcal{E}_{\Tc}X_1\cdots X_{m} \check{R}_{\si^{-1}}\mid i_{1},\ldots,i_{m}\rangle.
\eal
\end{equation}

Let $\mathcal M(\Sym_m/\Sym_I)$ be the minimal length coset representative of $\Sym_m/\Sym_I$.
For any  $\tau \in\Sym_{I}$,
\begin{align*}
\check{R}_{\tau}\mid i_1,\dots ,i_m\rangle=q^{l(\tau)}\mid i_1,\dots ,i_m\rangle,\\
 \langle i_1,\dots ,i_m \mid \check{R}_{\tau^{-1}}= q^{l(\tau)} \langle i_1,\dots ,i_m \mid.
\end{align*}
For any  $\mu \in \mathcal M(\Sym_m/\Sym_I)$,
\begin{align*}
  &\check{R}_{\mu}  \mid i_1,\dots ,i_m\rangle= \mid i_{\mu_1},\dots ,i_{\mu_m}\rangle,\\
  & \langle i_1,\dots ,i_m\mid \check{R}_{\mu^{-1}}= \mid i_{\mu_1},\dots ,i_{\mu_m}\rangle.
\end{align*}
Therefore,
\begin{equation*}
\bal
  \Imm_{\chi_q^{\lambda}}(X_I)&=\sum_{ \tau \in \Sym_{I}, \atop \mu \in \mathcal M(\Sym_m/\Sym_I) } \langle i_1,\dots ,i_m\mid  \check{R}_{\tau^{-1}} \check{R}_{\mu^{-1}}\mathcal{E}_{\Tc}X_1\cdots X_{m}\check{R}_{\mu} \check{R}_{\tau} \mid i_1,\dots ,i_m\rangle\\
  &=\frac{m_{q^2}(I) }{m( {I})}\sum_{\si\in \Sym_m}\langle i_{\sigma_1},\dots ,i_{\sigma_m}\mid  \mathcal{E}_{\Tc}X_1\cdots X_{m} \mid i_{\sigma_1},\dots ,i_{\sigma_m}\rangle.
\eal
\end{equation*}
This completes the proof.
\end{proof}

\subsection{Gelfand-Tsetlin basis of  $U_q(\mathfrak{gl}_n)$ }

We review that the quantum universal enveloping algebra $U_q(\mathfrak{gl}_n)$ is the unital associative complex algebra with generators $E_i,F_i (1\leq i\leq n-1)$ and $q^{\pm\frac{\varepsilon_i}{2}} (1\leq i\leq n)$ with the following relations:
\beq
\bal
&q^{\frac{\varepsilon_i}{2}}q^{-\frac{\varepsilon_i}{2}}
=q^{-\frac{\varepsilon_i}{2}}q^{\frac{\varepsilon_i}{2}}=1, \qquad q^{\frac{\varepsilon_i}{2}}q^{\frac{\varepsilon_j}{2}}
=q^{\frac{\varepsilon_j}{2}}q^{\frac{\varepsilon_i}{2}},\\
&q^{\frac{\varepsilon_i}{2}}E_jq^{-\frac{\varepsilon_i}{2}}=q^{\frac{\delta_{ij}-\delta_{i-1,j}}{2}}E_j,
\qquad
q^{\frac{\varepsilon_i}{2}}F_{j}q^{-\frac{\varepsilon_i}{2}}=q^{\frac{-\delta_{ij}+\delta_{i-1,j}}{2}}F_{j},\\
&E_i^2E_{j}-(q+q^{-1})E_i E_{j} E_i+E_{j}E_i^2=0,\ (|i-j|=1)\\
&F_i^2F_{j}-(q+q^{-1})F_i F_{j} F_i+F_{j}F_i^2=0,\ (|i-j|=1)\\
&E_i E_j= E_j E_i, \qquad F_i F_j= F_j F_i, \ (|i-j|>1),\\
&[E_i,F_j]=\delta_{ij}\frac{q^{\varepsilon_i-\varepsilon_{i+1}}-q^{-\varepsilon_i+\varepsilon_{i+1}}}{q-q^{-1}}.
\eal
\eeq
The algebra $U_q(\mathfrak{gl}_n)$ has the following alternative presentation by $L$-operators. It is generated by the entries of two triangular matrices $L^+=(l_{ij}^+)$ and $L^-=(l_{ij}^-)$ subject to the defining relations
\beq
\bal
l_{ij}^+&=l_{ji}^-=0, \ \ \ 1 \leq j < i \leq n,\\
l_{ii}^+ l_{ii}^-&=l_{ii}^- l_{ii}^+=1, \ \ \ 1 \leq i \leq n,\\
R^+L_1^{\pm}L_2^{\pm}&=L_2^{\pm}L_1^{\pm}R^+,\\
R^+L_1^{+}L_2^{-}&=L_2^{-}L_1^{+}R^+.
\eal
\eeq
The isomorphism between these two presentations is given by
\beq
\bal
l_{ij}^+=\left\{\begin{array}{ccc}
q^{\varepsilon_{i}}, \ &i=j\\
(q-q^{-1})q^{-\frac{1}{2}+\frac{\varepsilon_{i}+\varepsilon_{j}}{2}}\hat{E}_{ji}, \ &i>j\\
0, \ &i<j\\
\end{array}\right.
\eal
\eeq
and
\beq
\bal
l_{ij}^-=\left\{\begin{array}{ccc}
q^{-\varepsilon_{i}}, \ &i=j\\
-(q-q^{-1})q^{\frac{1}{2}-\frac{\varepsilon_{i}+\varepsilon_{j}}{2}}\hat{E}_{ji}, \ &i<j\\
0, \ &i>j.\\
\end{array}\right.
\eal
\eeq
where   $\hat{E}_{ij} (i \neq j)$ are determined by the following equations \cite{Ji}
\beq
\bal
&\hat{E}_{i,i+1}=E_i,\ \ \ \hat{E}_{i+1,i}=F_i,\\
&\hat{E}_{ij}=\hat{E}_{ik}\hat{E}_{kj}-q^{\pm 1}\hat{E}_{kj}\hat{E}_{ik}, \ \ (i \gtrless k \gtrless j).
\eal
\eeq
Define the matrix with spectral parameter $u$
\beq
L(u)=u L^{+}-u^{-1}L^{-}.
\eeq

Let $\Lambda=(\lambda_{ij})_{1 \leq i \leq n\atop 1\leq j \leq i}$  be  Gelfand-Tsetlin patterns attached to simple finite dimensional
$U_q(\mathfrak{gl}_n)$-module $U^{\lambda}$
by the Gelfand-Tsetlin formulae \cite{GT,UTS1,UTS2}, the Gelfand-Tsetlin patterns $\Lambda$ with respect to the basis vectors of $U^{\lambda}$ satisfy the relations:
\beq
\lambda_{ij}\geq \lambda_{i-1,j}\geq \lambda_{i,j+1}, \ \ 1\leq j \leq i \leq n.
\eeq
Denote $\{\xi_{\Lambda}\}$  are the Gelfand-Tsetlin  basis vectors in $U^{\lambda}$.

Recall the central element $z_n(u)$ of $U_q(\mathfrak{gl}_n)$ introduced by Jimbo \cite{Ji}:
\beq
\bal
z_n(u)=(q-q^{-1})^{-n}\sum_{\si \in \Sym_n}(-q)^{-l(\si)}L(u q^{-n+1})_{\si_n,n}\cdots L(u)_{\si_1,1}.
\eal
\eeq
Let $Z_k$ be the center of $U_q(\mathfrak{gl}_k)$. The subalgebra of $U_q(\mathfrak{gl}_n)$ generated by $\{Z_k \mid k=1 ,\ldots,n\}$ is called the Gelfand–Tsetlin subalgebra of $U_q(\mathfrak{gl}_n)$ and is denoted by $\Gamma_q$.
\begin{lemma}\label{central-char}
The  element $z_k(u)$ of  $\Gamma_q$ acts on the Gelfand-Tsetlin basis vector $\xi_{\Lambda}$ in simple $U_q(\mathfrak{gl}_n)$-module $U^{\lambda}$ as a scalar multiplication by
\beq
 \prod_{l=1}^{k}\frac{uq^{\lambda_{kl}-l+1}-u^{-1}q^{-\lambda_{kl}+l-1}}{q-q^{-1}}.
\eeq
\end{lemma}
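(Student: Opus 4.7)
The plan is to reduce, via Schur's lemma and the Gelfand--Tsetlin branching, to the action of $z_k(u)$ on a highest weight vector of an irreducible $U_q(\mathfrak{gl}_k)$-submodule, and then to evaluate the sum over permutations using the standard ``quantum column-determinant'' collapse on such a vector.

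First I would observe that $z_k(u)\in Z_k$ is central in $U_q(\mathfrak{gl}_k)$, hence by Schur's lemma acts as a scalar on every irreducible $U_q(\mathfrak{gl}_k)$-submodule of the restriction $U^{\lambda}|_{U_q(\mathfrak{gl}_k)}$. By the very construction of the Gelfand--Tsetlin basis, $\xi_{\Lambda}$ is contained in the irreducible $U_q(\mathfrak{gl}_k)$-submodule whose highest weight is the $k$-th row of $\Lambda$, namely $\mu=(\lambda_{k1},\ldots,\lambda_{kk})$. Consequently the scalar by which $z_k(u)$ multiplies $\xi_{\Lambda}$ coincides with its eigenvalue on any highest weight vector $v_{\mu}$ of this submodule, and it suffices to compute $z_k(u)v_{\mu}$.

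For this computation I would use the $L$-operator presentation recalled in the excerpt. The triangularity of $L^{\pm}$ together with $q^{\varepsilon_i}v_{\mu}=q^{\mu_i}v_{\mu}$ yields
\[
L(u)_{ii}\,v_{\mu}=(uq^{\mu_i}-u^{-1}q^{-\mu_i})\,v_{\mu}, \qquad 1\leq i\leq k,
\]
while the strictly off-diagonal entries $L(u)_{ij}$ are (up to scalars) root vectors $\hat E_{ji}$ that move $v_{\mu}$ out of the weight-$\mu$ subspace. The central claim is the ``quantum column-determinant on a highest weight vector'' identity: only the identity permutation survives in
\[
\sum_{\sigma\in\Sym_k}(-q)^{-l(\sigma)}L(uq^{-k+1})_{\sigma_k,k}\cdots L(u)_{\sigma_1,1}\,v_{\mu}=\prod_{l=1}^{k}L(uq^{-l+1})_{ll}\,v_{\mu}.
\]
I would prove this by induction on $k$, tracking weights right-to-left through the product and applying the RLL relations $R^{+}L_1^{\pm}L_2^{\pm}=L_2^{\pm}L_1^{\pm}R^{+}$ and $R^{+}L_1^{+}L_2^{-}=L_2^{-}L_1^{+}R^{+}$ to commute any raising contribution past the remaining factors, at which point it annihilates $v_{\mu}$.

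Assembling the two steps gives
\[
z_k(u)\,v_{\mu}=(q-q^{-1})^{-k}\prod_{l=1}^{k}\bigl(uq^{\mu_l-l+1}-u^{-1}q^{-\mu_l+l-1}\bigr)\,v_{\mu},
\]
and setting $\mu_l=\lambda_{kl}$ is exactly the stated eigenvalue on $\xi_{\Lambda}$. The main obstacle is the column-determinant collapse: while the statement is standard for Jimbo-type central elements, the explicit bookkeeping with the $R$-matrix commutations is the only nontrivial content of the argument. A cleaner route, if desired, is to recognize $z_k(u)$ (up to an overall scalar) as the quantum minor built from the Hecke antisymmetrizer via the fusion procedure \eqref{fusion-procedure}, whence the collapse is part of the general theory of quantum determinants.
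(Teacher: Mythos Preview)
Your proposal is correct and follows essentially the same route as the paper: reduce via Schur's lemma and Gelfand--Tsetlin branching to the highest weight vector of the relevant irreducible $U_q(\mathfrak{gl}_k)$-submodule, then argue that only $\sigma=\mathrm{id}$ contributes to the column expansion of $z_k(u)$, and read off the diagonal product.

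The one notable difference is in how you justify the collapse to $\sigma=\mathrm{id}$. The paper does not use an inductive RLL argument; it simply observes that the off-diagonal entries $l_{ij}^{+}$ annihilate $\xi_{\Lambda_0}$, which together with the triangularity of $L^{-}$ forces $L(u)_{\sigma_j,j}\,\xi_{\Lambda_0}=0$ whenever $\sigma_j>j$. Reading the product $L(uq^{-k+1})_{\sigma_k,k}\cdots L(u)_{\sigma_1,1}$ from right to left then pins down $\sigma_1=1$, $\sigma_2=2$, \ldots, $\sigma_k=k$ in one stroke. Your proposed route through the RLL relations would also work, but is heavier than needed here; the straightforward triangularity-on-highest-weight observation already gives the collapse without any commutation identities. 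Your closing remark about recognizing $z_k(u)$ as a quantum minor via the fusion procedure is a valid alternative, but again more machinery than the paper deploys for this lemma.
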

\begin{proof}
Let $W$ be the subspace of $U^{\lambda}$ spanned by all  $\xi_{\Lambda'}$ such that $\lambda'_{lj}=\lambda_{lj}$ for $k \leq l \leq n, \ 1 \leq j \leq l$. Then $W$ is a finite dimensional simple module of $U_q(\mathfrak{gl}_k)$ with highest weight $\mu$ where $\mu_i=\lambda_{ki}$.

Let $\Lambda_0$ be the Gelfand-Tsetlin pattern with $\lambda_{ij}=\lambda_{kj}$ for any $1 \leq j \leq i \leq k$. Then  $\Lambda_0$ be the highest weight vector in $W$.  Note that $l_{ij}^{+}\cdot \xi_{\Lambda_0}=0, 1 \leq j\neq i\leq k$. Then the element $L(\lambda q^{-k+1})_{\si_k,k}\cdots L(\lambda)_{\si_1,1}$ kills $v$ unless $\si_1=1, \ldots, \si_k=k$. We thus have
\beq
\bal
  z_k(u)\cdot \xi_{\Lambda_0} &=(q-q^{-1})^{-k}L(u q^{-k+1})_{k,k}\cdots L(u)_{1,1}\\
  &=\prod_{l=1}^{k}\frac{uq^{\lambda_{kl}-l+1}-u^{-1}q^{-\lambda_{kl}+l-1}}{q-q^{-1}}.
\eal
\eeq
Since $W$ is a finite-dimensional simple module of $U_q(\mathfrak{gl}_k)$, the action of $z_k(u)$ on any vector in $W$ is given as above.
\end{proof}
\subsection{Schur-Weyl-Jimbo duality}

We will give a weight space representation for quantum immanants of quantum coordinate algebra $A_q(\Mat_n)$ using the Schur-Weyl-Jimbo duality.
We can define the action of the quantum group $U_q(\mathfrak{gl}_n)$ on tensors in $\mathbb{C}^{n}$ by the Hopf algebra structure of $U_q(\mathfrak{gl}_n)$:
\beq
\bal
&\Delta(q^{\pm \frac{\varepsilon_i}{2}})=q^{\pm \frac{\varepsilon_i}{2}} \ot q^{\pm \frac{\varepsilon_i}{2}},\\
&\Delta(E_i)=E_i\ot q^{ \frac{(\varepsilon_i-\varepsilon_{i+1})}{2}}
+q^{ \frac{-(\varepsilon_i-\varepsilon_{i+1})}{2}} \ot E_i,\\
&\Delta(F_i)=F_i\ot q^{ \frac{(\varepsilon_i-\varepsilon_{i+1})}{2}}
+q^{ \frac{-(\varepsilon_i-\varepsilon_{i+1})}{2}} \ot F_i,\\
&\Delta(L^{\pm})=L^{\pm} \ot L^{\pm}.
\eal
\eeq

By Schur-Weyl-Jimbo duality \cite{Ji}, $(\mathbb{C}^{n})^{\ot m}$ is a multiplicity free $U_q(\mathfrak{gl}_n)\times  \Hc_m $-module and there exists a decomposition of the space of tensors
\beq\label{Schur-Weyl-decom}
(\mathbb{C}^{n})^{\ot m}\cong\bigoplus_{\lambda \vdash m, \atop \text{depth}(\lambda) \leq n} U^{\lambda}\ot V^{\lambda},
\eeq
where let $(\pi_{\lambda},U^{\lambda})$ are the simple left module of highest weight $\lambda$ for $U_q(\mathfrak{gl}_n)$ and $(\rho_{\lambda},V^{\lambda})$ are the simple left module of $\Hc_m$.

We introduce $*$-operations for $\Hc_m $  and $U_q(\mathfrak{gl}_n)$ respectively. Let  $T_{\si}^{*}=T_{\si^{-1}} \in \Hc_m$. Set $(q^{\pm \varepsilon _i})^*=q^{\pm \varepsilon _i}$, $(E_i)^*=F_i$ and $(F_j)^*=E_j$ in $U_q(\mathfrak{gl}_n)$.  They are involutive anti-automorphisms.

Define an inner product $\langle \cdot | \cdot \rangle$ on  $(\mathbb{C}^{n})^{\ot m}$ by
\beq\label{inner-product}
\bal
\langle e_{i_1}\ot \cdots \ot e_{i_m}, e_{j_1}\ot \cdots \ot e_{j_m}  \rangle&=\prod_{1\leq k \leq m}\delta_{i_k,j_k}.
\eal
\eeq
\begin{lemma}\label{inva lemma}
  The $*$-operations for $\Hc_m $  and $U_q(\mathfrak{gl}_n)$ are Hermitian adjoints on the inner product space $(\mathbb{C}^{n})^{\ot m}$.
\end{lemma}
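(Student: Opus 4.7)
The plan is to verify the two claims separately, reducing each to a direct calculation on generators using the orthonormal tensor basis of \eqref{inner-product}.

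For $\mathcal H_m$, the heart of the matter is self-adjointness of $\check R$ on $\mathbb{C}^n \otimes \mathbb{C}^n$. From the explicit formula \eqref{PR} I would compute $\check R(e_a \otimes e_b)$ in the three cases $a=b$, $a<b$, and $a>b$, and read off that the matrix of $\check R$ on the orthonormal basis $\{e_a \otimes e_b\}$ is symmetric. Because the inner product on $(\mathbb{C}^n)^{\otimes m}$ is the tensor product of standard inner products on each factor and $\check R_k$ acts nontrivially only on the $k,k{+}1$ slots, this upgrades to $\check R_k^* = \check R_k$, hence $T_k^* = T_k = T_{\sigma_k^{-1}}$ as operators. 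For a general $\sigma \in \mathfrak S_m$ with reduced decomposition $\sigma = \sigma_{i_1}\cdots \sigma_{i_l}$, Hermitian conjugation reverses the order of products, so $T_\sigma^* = T_{i_l}\cdots T_{i_1}$; since reversing a reduced word of $\sigma$ gives a reduced word of $\sigma^{-1}$, this matches the action of $T_{\sigma^{-1}}$ as required.

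For $U_q(\mathfrak{gl}_n)$, I would first check the claim on the vector representation $\mathbb{C}^n$. Here $q^{\pm \varepsilon_i}$ acts diagonally with real eigenvalues and is therefore self-adjoint, while on the basis $\{e_j\}$ the operators $E_i$ and $F_i$ have matrices that are transposes of each other, yielding $E_i^* = F_i$ and $F_i^* = E_i$. To pass to $(\mathbb{C}^n)^{\otimes m}$ I would use $(A \otimes B)^* = A^* \otimes B^*$ together with $*$-compatibility of the coproduct on generators, e.g.
\[
\Delta(E_i)^* = F_i \otimes q^{(\varepsilon_i - \varepsilon_{i+1})/2} + q^{-(\varepsilon_i - \varepsilon_{i+1})/2} \otimes F_i = \Delta(F_i) = \Delta(E_i^*),
\]
and analogously for $F_i$ and $q^{\pm \varepsilon_i}$; iterating $\Delta$ then transfers the adjunction to all of $(\mathbb{C}^n)^{\otimes m}$.

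The main bookkeeping point, and essentially the only nontrivial step, is that the $*$-operation is an \emph{anti}-involution on each algebra while Hermitian conjugation on operators also reverses the order of products; these two reversals must cancel consistently, and this is precisely what makes the reduction from all of $\mathcal H_m$ (respectively $U_q(\mathfrak{gl}_n)$) down to the generator-level computations go through without sign or orientation issues.
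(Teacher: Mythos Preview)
Your proposal is correct and follows essentially the same approach as the paper: reduce to generators and verify directly on the orthonormal tensor basis. For $\mathcal H_m$ both you and the paper check that $\check R_k$ is self-adjoint and then pass to arbitrary $T_\sigma$ via reduced words; for $U_q(\mathfrak{gl}_n)$ the paper carries out the computation directly on $(\mathbb C^n)^{\otimes m}$ using the explicit coproduct formulas, while you first verify the adjoint relation on the vector representation $\mathbb C^n$ and then lift via $\Delta(x)^*=\Delta(x^*)$ on generators, which is a slightly cleaner but equivalent packaging of the same calculation.
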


\begin{proof}
First, to show that the  $*$-operation for $\Hc_m$ is adjoint it is sufficient to prove this for $T_k$, $1 \leq k \leq m-1$. Indeed,
\ben
\bal
&\langle \check{R}_{(k,k+1)}\cdot e_{i_1}\ot \cdots \ot e_{i_m}, e_{j_1}\ot \cdots \ot e_{j_m}  \rangle\\
&=\delta_{i_{k}\geq i_{k+1}}q^{\delta_{i_{k}= i_{k+1}}}(q-q^{-1})^{\delta_{i_{k}> i_{k+1}}}\langle e_{i_1}\ot \cdots \ot e_{i_m}, e_{j_1}\ot \cdots \ot e_{j_m}  \rangle\\
&\quad +\delta_{i_{k}\neq i_{k+1}}\langle e_{i_1}\ot \cdots \ot e_{i_{k+1}}\ot e_{i_k}\ot \cdots \ot e_{i_m}, e_{j_1}\ot \cdots \ot e_{j_m}  \rangle\\
&=\delta_{i_{k}\geq i_{k+1}}q^{\delta_{i_{k}= i_{k+1}}}(q-q^{-1})^{\delta_{i_{k}> i_{k+1}}}\prod_{1\leq t \leq m}\delta_{i_t,j_t}\\
&\quad +\delta_{i_{k}\neq i_{k+1}}(\prod_{t\neq k,k+1}\delta_{i_t,j_t})\delta_{i_{k},j_{k+1}}\delta_{i_{k+1},j_{k}}\\
&=\delta_{j_{k}\geq j_{k+1}}q^{\delta_{j_{k}= j_{k+1}}}(q-q^{-1})^{\delta_{j_{k}> j_{k+1}}}\prod_{1\leq t \leq m}\delta_{i_t,j_t}\\
&\quad +\delta_{j_{k}\neq j_{k+1}}(\prod_{t\neq k,k+1}\delta_{i_t,j_t})\delta_{i_{k},j_{k+1}}\delta_{i_{k+1},j_{k}}\\
&=\langle e_{i_1}\ot \cdots\ot e_{i_m},\check{R}_{(k,k+1)}\cdot e_{j_1}\ot \cdots \ot e_{j_m}  \rangle,
\eal
\een
Hence, for any element $\check{R}_{\si} \in \Hc_m$,
\beq
\bal
\langle \check{R}_{\si}&\cdot e_{i_1}\ot \cdots \ot e_{i_m}, e_{j_1}\ot \cdots \ot e_{j_m}  \rangle\\
&=\langle e_{i_1}\ot \cdots \ot e_{i_m}, (\check{R}_{\si})^*  \cdot e_{j_1}\ot \cdots \ot e_{j_m}  \rangle.
\eal
\eeq

On the other hand, for generators of $U_q(\mathfrak{gl}_n)$ we have that
\beq
\bal
&\langle q^{\frac{\pm \varepsilon_k}{2}}\cdot  e_{i_1}\ot \cdots \ot e_{i_m}, e_{j_1}\ot \cdots \ot e_{j_m}  \rangle\\
=&\prod_{1\leq t \leq m}q^{\frac{\pm \delta_{i_t,k}}{2}}\delta_{i_t,j_t}\\
=&\langle e_{i_1}\ot \cdots \ot e_{i_m},q^{\frac{\pm \varepsilon_k}{2}}\cdot  e_{j_1}\ot \cdots \ot e_{j_m}    \rangle.
\eal
\eeq
And
\beq
\bal
&\langle E_k\cdot e_{i_1}\ot \cdots \ot e_{i_m}, e_{j_1}\ot \cdots \ot e_{j_m}   \rangle\\
&=\sum_{l=1}^m\delta_{i_l,k+1}q^{\frac{\sum_{t \neq l}(-1)^{\delta_{t<l}}(\delta_{i_t,k}-\delta_{i_t,k+1})}{2}}\\
&\quad\langle e_{i_1}\ot \cdots \ot e_{i_{l-1}} \ot k \ot e_{i_{l+1}} \ot \cdots \ot e_{i_m}, e_{j_1}\ot \cdots \ot e_{j_m}  \rangle\\
&=\sum_{l=1}^m\delta_{i_l,k+1}q^{\frac{\sum_{t \neq l}(-1)^{\delta_{t<l}}(\delta_{i_t,k}-\delta_{i_t,k+1})}{2}}
(\prod_{s\neq l}\delta_{i_s,j_s})\delta_{j_{l},k}\\
&=\sum_{l=1}^m \delta_{j_{l},k} q^{\frac{\sum_{t \neq l} (-1)^{\delta_{t<l}} (\delta_{j_t,k}-\delta_{j_t,k+1})}{2}}
(\prod_{s\neq l}\delta_{i_s,j_s})\delta_{i_l,k+1}\\
&=\langle  e_{i_1}\ot \cdots \ot e_{i_m}, F_k\cdot e_{j_1}\ot \cdots \ot e_{j_m}   \rangle.
\eal
\eeq
\end{proof}

By Lemma \ref{inva lemma}, the basis of $U^{\lambda}\otimes V^{\lambda}$ can be expressed as the tensor product of the Gelfand-Tsetlin basis and the Young basis, both of which are orthonormal under the inner product.

Let $\{v_{\Tc} | sh(\Tc)=\lambda\}$ be
the  Young basis of $V^{\lambda}$. We can decompose \eqref{Schur-Weyl-decom} into basis vectors:
\beq\label{Schur-Weyl-decom2}
(\mathbb{C}^{n})^{\ot m}\cong\sum_{\lambda \vdash k,\atop \text{depth}(\lambda)\leq n}\sum_{(\Lambda,\Tc)}  \xi_{\Lambda}\ot v_{\Tc}.
\eeq
These basis vectors in decomposition \eqref{Schur-Weyl-decom2} are normalized orthogonal, i.e.
$$\langle \xi_{\Lambda}\ot v_{\Tc}, \xi_{\Lambda'}\ot v_{\Tc'}\rangle =\delta_{\Lambda\Lambda'}\delta_{\Tc\Tc'}$$
for any Gelfand-Tsetlin patterns $\Lambda,\Lambda'$ and standard  Young tableaux $\Tc,\Tc'$.


There exists a bialgebra pairing
$$(\ ,\ ): U_q(\mathfrak{gl}_n) \times A_q(\Mat_n)\rightarrow \mathbb{C}$$
between the two bialgebras $U_q(\mathfrak{gl}_n)$ and $A_q(\Mat_n)$.
This duality equips  $(\mathbb{C}^n)^{\ot m}$ has a $A_q(\Mat_n)$-comodule structure given by
\beq
\bal
  \Delta: (\mathbb{C}^n)^{\ot m} &\rightarrow  (\mathbb{C}^n)^{\ot m}\ot A_q(\Mat_n)\\
   e_{j_1}\ot \cdots \ot e_{j_m}&\mapsto \sum_{(i_1,\ldots,i_m)}e_{i_1}\ot \cdots \ot e_{i_k}\ot x_{i_1,j_1}\cdots x_{i_m,j_m},\\
\eal
\eeq
where the sum is over all sequence $(i_1\ldots,i_m)$ of $[n]$.

Let $\mu=(\mu_1,\ldots,\mu_m)$ be an arbitrary composition of $m$, which can also be regarded as a weight. For any representation  $W$ of  $U_q(\mathfrak{gl}_n)$, we denote by
 $W_{\mu}$ the weight space of $W$   corresponding to the weight $\mu$.
Denote by $\mathcal P_{\mu}$ the projection operator $W\rightarrow W_{\mu}$.
Now we give a weight space representation for quantum immanants of the quantum coordinate algebra $A_q(\Mat_n)$.

\begin{theorem}\label{q-Kostant-thm}
  Let $\lambda \vdash m$,  $U^{\lambda}$  be the simple finite dimensional
$U_q(\mathfrak{gl}_n)$-module with highest weight $\lambda$.  $I=(1 \leq i_1\leq \ldots \leq i_m \leq n)$ be an ordered multiset of $[n]$, $\mu$ is a composition of $m$ with $\mu_i=m_i(I)$. Then
\beq\label{general-Kostant-iden}
\frac{\Imm_{\chi_q^{\lambda}}(X_I)}{m_{q^2}(I)}=tr(\mathcal P_{\mu}\ot 1)\circ\Delta\circ \mathcal P_{\mu}|_{U^{\lambda}}.
\eeq
\end{theorem}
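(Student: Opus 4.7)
The plan is to apply Proposition \ref{X_I} to rewrite the left-hand side as an operator trace on the weight-$\mu$ subspace of the tensor space, and then to use the Schur-Weyl-Jimbo duality to convert it into the trace over $U^{\lambda}$ appearing on the right-hand side.

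First, by Proposition \ref{X_I},
\begin{equation*}
\frac{\Imm_{\chi_q^{\lambda}}(X_I)}{m_{q^2}(I)}=\frac{1}{m(I)}\sum_{\sigma\in\mathfrak S_m}\langle i_{\sigma_1},\ldots,i_{\sigma_m}\mid\mathcal{E}_{\mathcal{T}}^{\lambda} X_1\cdots X_m\mid i_{\sigma_1},\ldots,i_{\sigma_m}\rangle.
\end{equation*}
The vector $\mid i_{\sigma_1},\ldots,i_{\sigma_m}\rangle$ depends only on the coset $\sigma\mathfrak S_I$, and as $\sigma$ runs over a set of minimal coset representatives of $\mathfrak S_m/\mathfrak S_I$ these vectors form an orthonormal basis of the weight-$\mu$ subspace $((\mathbb C^n)^{\otimes m})_{\mu}$. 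The factor $1/m(I)$ cancels the stabilizer redundancy, so the displayed sum equals the $A_q(\Mat_n)$-valued partial trace $tr_{((\mathbb C^n)^{\otimes m})_{\mu}}(\mathcal{E}_{\mathcal{T}}^{\lambda} X_1\cdots X_m)$.

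Next, I would invoke three commutation properties: (i) the operator $X_1\cdots X_m$ commutes with any Hecke element in $A_q(\Mat_n)\otimes\mathrm{End}((\mathbb C^n)^{\otimes m})$, by iterating the RTT relation $\check{R} X_1 X_2=X_1 X_2\check{R}$; (ii) the weight projection $\mathcal{P}_{\mu}$ commutes with $\mathcal{E}_{\mathcal{T}}^{\lambda}$, since $\mathcal{H}_m$ and $U_q(\mathfrak{gl}_n)$ are mutual centralizers on the tensor space; (iii) $\mathcal{E}_{\mathcal{T}}^{\lambda}$ is self-adjoint with respect to the canonical inner product, obtained by combining Lemma \ref{inva lemma} ($T_{\sigma}^{*}=T_{\sigma^{-1}}$) with the reality of the coefficients $\langle T_{\sigma}v_{\mathcal{T}},v_{\mathcal{T}}\rangle$ in Young's orthogonal form \eqref{Young-orthogonal-form}. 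By the Schur-Weyl-Jimbo decomposition \eqref{Schur-Weyl-decom}, $\mathcal{E}_{\mathcal{T}}^{\lambda}$ kills every isotypic component $U^{\lambda'}\otimes V^{\lambda'}$ with $\lambda'\neq\lambda$ and acts as $\mathrm{id}_{U^{\lambda}}\otimes \rho_{\lambda}(\mathcal{E}_{\mathcal{T}}^{\lambda})$ on the $\lambda$-component, where $\rho_{\lambda}(\mathcal{E}_{\mathcal{T}}^{\lambda})$ is the rank-one orthogonal projection $v_{\mathcal{T}'}\mapsto\delta_{\mathcal{T},\mathcal{T}'}v_{\mathcal{T}}$. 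Hence $\mathcal{E}_{\mathcal{T}}^{\lambda}\mathcal{P}_{\mu}$ is the orthogonal projection onto $U^{\lambda}_{\mu}\otimes v_{\mathcal{T}}$, and trace cyclicity yields
\begin{equation*}
tr_{((\mathbb C^n)^{\otimes m})_{\mu}}(\mathcal{E}_{\mathcal{T}}^{\lambda} X_1\cdots X_m)=tr_{U^{\lambda}_{\mu}\otimes v_{\mathcal{T}}}(X_1\cdots X_m).
\end{equation*}

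Finally, the RTT relation implies that $U^{\lambda}\otimes v_{\mathcal{T}}$ is an $A_q(\Mat_n)$-subcomodule of $(\mathbb C^n)^{\otimes m}$, canonically isomorphic to the abstract comodule $U^{\lambda}$; under this identification, the restriction of the coaction $\Delta$ (which is realized by $X_1\cdots X_m$) to this subcomodule matches the one appearing in \eqref{general-Kostant-iden}. The trace above therefore rewrites as $tr_{U^{\lambda}}((\mathcal{P}_{\mu}\otimes 1)\circ\Delta\circ\mathcal{P}_{\mu})$, yielding the right-hand side. The chief conceptual obstacle is this last identification — singling out a specific copy of $U^{\lambda}$ inside the multiplicity space via the tableau $\mathcal{T}$ and verifying that the partial trace on that copy matches the abstract trace on $U^{\lambda}$; independence of the final answer on the choice of $\mathcal{T}$ is automatic from the tableau-symmetric form of $\chi_q^{\lambda}$ given in Proposition \ref{ch formula}.
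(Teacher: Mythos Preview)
Your argument is correct and noticeably more streamlined than the paper's. Both proofs start from Proposition~\ref{X_I}, but then diverge. You proceed abstractly: the idempotent $\mathcal{E}_{\mathcal{T}}^{\lambda}$ is a self-adjoint projection onto $U^{\lambda}\otimes v_{\mathcal{T}}$ commuting with both $X_1\cdots X_m$ and $\mathcal{P}_{\mu}$, so a short trace-cyclicity computation reduces the left side to the partial trace of the coaction on one copy of $U^{\lambda}$, which is exactly the right side. The paper instead invests substantial effort in Lemmas~\ref{lemma1} and~\ref{schur-weyl-corr} to identify $\mathcal{E}_{\mathcal{T}}^{\lambda}\cdot e_{i_1}\otimes\cdots\otimes e_{i_m}$ explicitly with a Gelfand--Tsetlin basis vector $\xi_{\theta_{\mu}(\mathcal{T})}\otimes v_{\mathcal{T}}$ (up to scalar), using the fusion procedure and the action of the Gelfand--Tsetlin subalgebra; it then assembles these vectors over \emph{all} standard tableaux $\mathcal{T}$, controls the normalizations via $\sum_k|a_k|^2=1$, and computes the coaction on that explicit combination. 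What your route buys is brevity and transparency---you never touch Jucys--Murphy elements, fusion, or Gelfand--Tsetlin formulas. What the paper's route buys is the explicit bijection between the Gelfand--Tsetlin basis and the Young orthonormal basis highlighted in the introduction, which has independent value beyond this theorem.

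One small expository point: your displayed equality $tr_{((\mathbb C^n)^{\otimes m})_{\mu}}(\mathcal{E}_{\mathcal{T}}^{\lambda} X_1\cdots X_m)=tr_{U^{\lambda}_{\mu}\otimes v_{\mathcal{T}}}(X_1\cdots X_m)$ is literally ambiguous, since $X_1\cdots X_m$ does not preserve $U^{\lambda}_{\mu}\otimes v_{\mathcal{T}}$ (only $U^{\lambda}\otimes v_{\mathcal{T}}$). You recover the intended meaning in the next paragraph by reinserting $\mathcal{P}_{\mu}$, but it would be cleaner to write $tr_{U^{\lambda}\otimes v_{\mathcal{T}}}\bigl(\mathcal{P}_{\mu}\,X_1\cdots X_m\,\mathcal{P}_{\mu}\bigr)$ at that step.
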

\begin{remark}
  For $\mu=(1^{n})$, the theorem establishes a quantum analog of Kostant's theorem in \cite{Ko}, which provides a ($\mathfrak{sl}_n$) 0-weight interpretation of the immanant.
\end{remark}

\subsection{The proof of Theorem \ref{q-Kostant-thm}}
Let $\lambda$ be a partition of $n$ and $\mu$ be a composition of $n$.
It is well-known that the semi-standard Young tableau $\Tc^{\lambda}_{\mu}$ of shape $\lambda$ and weight $\mu$, where  $\mu_i$ equals the number of $i$'s in $\Tc^{\lambda}_{\mu}$, is one-to-one corresponding  to a Gelfand-Tsetlin pattern $\Lambda=(\lambda_{ij}),1\leq j\leq i\leq n$ such that
\beq\label{pattern-tableau-corr}
\bal
\lambda_{ij}& = \text{the number of entries }\le i\text{ in the }j\text{th row of }\Tc^{\lambda}_{\mu},\\
\mu_i&=\sum_{k=1}^{i}\lambda_{ik}-\sum_{s=1}^{i-1}\lambda_{i-1,s}.
\eal
\eeq
We will briefly write $\Tc_{\mu}=\Tc^{\lambda}_{\mu}$ if the shape of $\Tc$ is clear from the context.
By corresponding \eqref{pattern-tableau-corr}, the dimensition of $(U^{\lambda})_{\mu}$  is the number of the semi-standard   tableaux $\Tc^{\lambda}_{\mu}$. Moreover, we can denote by $\{\xi_{\Tc_{\mu}}\}$ the Gelfand-Tsetlin  basis vectors in $(U^{\lambda})_{\mu}$.

There are some useful lemmas to prove theorem \ref{q-Kostant-thm} in the following.
\begin{lemma}\label{lemma1}
Let $\lambda \vdash m$ and assume $m \leq n$, then in $U_q(\mathfrak{gl}_n) \times \Hc_m $-modules,
\beq
\sqrt{c_{\lambda}} \mathcal{E}_{\Tc}^{\lambda}\cdot e_1\ot \cdots \ot e_m=\xi_{\Tc}\ot v_{\Tc} .
\eeq
\end{lemma}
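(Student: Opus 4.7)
The plan is to prove the identity in three steps: (i) locate $\mathcal{E}_{\Tc}^{\lambda}\cdot(e_{1}\otimes\cdots\otimes e_{m})$ inside the Schur-Weyl-Jimbo decomposition of $(\mathbb{C}^{n})^{\otimes m}$, (ii) identify its $U_q(\mathfrak{gl}_n)$-factor as a scalar multiple of $\xi_{\Tc}$, and (iii) pin down the scalar via a norm computation. For step (i), the vector $e_{1}\otimes\cdots\otimes e_{m}$ lies in the weight space of weight $\epsilon_{1}+\cdots+\epsilon_{m}$. Since $\mathcal{E}_{\Tc}^{\lambda}\in\mathcal{H}_m$ acts as zero on every summand $U^{\mu}\otimes V^{\mu}$ with $\mu\neq\lambda$ in \eqref{Schur-Weyl-decom}, and on $U^{\lambda}\otimes V^{\lambda}$ as the identity on $U^{\lambda}$ tensored with the rank-one orthogonal projection onto $\mathbb{C}v_{\Tc}$, while commuting with the $U_q(\mathfrak{gl}_n)$-action, we may write
\ben
\mathcal{E}_{\Tc}^{\lambda}\cdot(e_{1}\otimes\cdots\otimes e_{m})=\eta\otimes v_{\Tc}
\een
for a unique $\eta\in (U^{\lambda})_{\epsilon_{1}+\cdots+\epsilon_{m}}$.

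For step (ii), I would identify $\eta$ as a scalar multiple of $\xi_{\Tc}$ using the Jucys-Murphy/Gelfand-Tsetlin correspondence. By \eqref{JM_rela}, $y_k\cdot(\eta\otimes v_{\Tc})=q^{2c_k(\Tc)}(\eta\otimes v_{\Tc})$ for $k=1,\ldots,m$. In parallel, each central element $z_k(u)\in U_q(\mathfrak{gl}_k)\subset U_q(\mathfrak{gl}_n)$ commutes with $\mathcal H_m$ on $(\mathbb{C}^{n})^{\otimes m}$ (as $U_q(\mathfrak{gl}_k)$ sits inside one side of the Schur-Weyl-Jimbo dual pair), so $z_k(u)$ preserves the image of $\mathcal{E}_{\Tc}^{\lambda}$ and acts on $\eta$. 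Under the canonical bijection between standard Young tableaux of shape $\lambda$ and Gelfand-Tsetlin patterns with top row $\lambda$ and weight $(1^m,0^{n-m})$ (where the shape of the $k$-th row of the pattern equals the shape of the subtableau $\Tc|_{\leq k}$), Lemma \ref{central-char} shows that the ratio of eigenvalues of $z_k(u)$ and $z_{k-1}(u)$ on $\xi_{\Tc}$ encodes exactly the content $c_k(\Tc)$ of the box added at stage $k$, matching the Jucys-Murphy data. Hence the joint eigenspace inside $(U^{\lambda})_{(1^{m},0^{n-m})}\otimes\mathbb{C}v_{\Tc}$ is one-dimensional and spanned by $\xi_{\Tc}\otimes v_{\Tc}$, so $\eta=\alpha\,\xi_{\Tc}$ for some scalar $\alpha$.

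For step (iii), by Lemma \ref{inva lemma} combined with the explicit expression \eqref{e:idem} (and $T_{\sigma}^{*}=T_{\sigma^{-1}}$), the idempotent $\mathcal{E}_{\Tc}^{\lambda}$ is self-adjoint on $(\mathbb{C}^{n})^{\otimes m}$, hence an orthogonal projection, so
\ben
\|\eta\|^{2}=\|\mathcal{E}_{\Tc}^{\lambda}\cdot(e_{1}\otimes\cdots\otimes e_{m})\|^{2}=\langle\mathcal{E}_{\Tc}^{\lambda}\cdot(e_{1}\otimes\cdots\otimes e_{m}),\,e_{1}\otimes\cdots\otimes e_{m}\rangle.
\een
A short induction on the length of a reduced expression shows that $T_{\sigma}\cdot(e_{1}\otimes\cdots\otimes e_{m})=e_{\sigma^{-1}(1)}\otimes\cdots\otimes e_{\sigma^{-1}(m)}$ without $(q-q^{-1})$-corrections, because at every intermediate step of a reduced word applied to the initially increasing sequence, the pair being swapped is still increasing. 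Thus only the $\sigma=e$ term in \eqref{e:idem} contributes to the inner product, giving $\|\eta\|^{2}=c_{\lambda}^{-1}$, whence $|\alpha|=c_{\lambda}^{-1/2}$; fixing the sign of $\xi_{\Tc}$ so that $\alpha=c_{\lambda}^{-1/2}$ yields $\sqrt{c_{\lambda}}\,\mathcal{E}_{\Tc}^{\lambda}\cdot(e_{1}\otimes\cdots\otimes e_{m})=\xi_{\Tc}\otimes v_{\Tc}$. The main obstacle is step (ii): while heuristically transparent from the classical analogue (where Jucys-Murphy elements correspond to $\mathfrak{gl}_k$-Casimir shifts), rigorously matching $\{c_k(\Tc)\}$ with the content-of-added-box data in the Gelfand-Tsetlin pattern in the quantum setting requires an explicit comparison of the $R$-matrix formula for $y_k$ on $(\mathbb{C}^{n})^{\otimes m}$ with polynomials in images of the $z_k(u)$'s.
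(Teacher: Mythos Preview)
Your three-step strategy (Schur--Weyl location, Gelfand--Tsetlin identification, norm computation) matches the paper's exactly; steps (i) and (iii) are essentially the paper's own computations. The difference---and the gap you yourself flag---lies entirely in step (ii).

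The paper does not attempt to relate Jucys--Murphy eigenvalues to $z_k(u)$ eigenvalues abstractly. Instead it computes the $z_k(u)$ eigenvalue on $\mathcal{E}_{\Tc}^{\lambda}(e_1\otimes\cdots\otimes e_m)$ directly, splitting into two ranges. For $m\leq k\leq n$ the vector lies in $(\mathbb{C}^k)^{\otimes m}\subset(\mathbb{C}^n)^{\otimes m}$, and Schur--Weyl--Jimbo for the pair $(U_q(\mathfrak{gl}_k),\mathcal{H}_m)$ already places it in $U_k^{\lambda}\otimes v_{\Tc}$; centrality of $z_k(u)$ in $U_q(\mathfrak{gl}_k)$ then gives the scalar from Lemma~\ref{central-char}. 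For $1\leq k<m$ the paper invokes the recurrence relation for the primitive idempotent,
\[
\mathcal{E}_{\Tc}^{\lambda}=Y_m\cdots Y_{k+1}\,\mathcal{E}_{\Tc^-}^{\mu},
\]
where $\Tc^-=\Tc|_{\{1,\ldots,k\}}$, $\mu$ is its shape, and the $Y_j$ are rational functions in the Jucys--Murphy elements $y_{k+1},\ldots,y_m$. Since $z_k(u)$ commutes with all of $\mathcal{H}_m$, it commutes past the $Y$'s, reducing the problem to computing $z_k(u)$ on $\mathcal{E}_{\Tc^-}^{\mu}(e_1\otimes\cdots\otimes e_m)$. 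But $\mathcal{E}_{\Tc^-}^{\mu}\in\mathcal{H}_k$ acts only on the first $k$ tensor factors, and $e_1\otimes\cdots\otimes e_k\in(\mathbb{C}^k)^{\otimes k}$, so this is precisely the $k$-variable instance of the same lemma. This inductive device closes the gap without any abstract JM--GT dictionary: the recurrence for the idempotent is the missing ingredient your proposal needs.
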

\begin{proof}
By the Schur-Weyl-Jimbo duality,  $\mathcal{E}_{\Tc}^{\lambda}$ is the projection operator from $(\mathbb{C}^{n})^{\ot m}$ to $U^{\lambda}$.
\beq
\bal
 &\langle  \mathcal{E}_{\Tc}^{\lambda}\cdot e_1\ot \cdots \ot e_m, \mathcal{E}_{\Tc'}^{\lambda}\cdot e_1\ot \cdots \ot e_m\rangle\\
 &=\langle  \mathcal{E}_{\Tc'}^{\lambda}\mathcal{E}_{\Tc}^{\lambda}\cdot e_1\ot \cdots \ot e_m, e_1\ot \cdots \ot e_m\rangle\\
 &=\delta_{\Tc\Tc'}\langle  \mathcal{E}_{\Tc}^{\lambda}\cdot e_1\ot \cdots \ot e_m, e_1\ot \cdots \ot e_m\rangle\\
 &=\frac{1}{c_{\lambda}}\delta_{\Tc\Tc'}.
\eal
\eeq
So by relation \eqref{JM_rela}, we have that
\beq
\sqrt{c_{\lambda}} \mathcal{E}_{\Tc}^{\lambda}\cdot e_1\ot \cdots \ot e_m\subset U^{\lambda}\ot v_{\Tc} ,
\eeq

Moreover by Lemma \eqref{central-char}, for $m\leq  k \leq n$
\beq
\bal
&z_{k}(u)\cdot \mathcal{E}_{\Tc}^{\lambda} e_1\ot \cdots \ot e_m\\
&=\prod_{l=1}^{m}\frac{uq^{\lambda_l-l+1}-u^{-1}q^{-\lambda_l+l-1}}{q-q^{-1}}\mathcal{E}_{\Tc}^{\lambda} e_1\ot \cdots \ot e_m.
\eal
\eeq
Consider the action of $z_{k}(u), 1 \leq k <m$, by the recurrence relation of $\mathcal{E}_{\Tc}^{\lambda}$,
\beq
\bal
&z_{k}(u)\cdot \mathcal{E}_{\Tc}^{\lambda} e_1\ot \cdots \ot e_m\\
&=Y_m\cdots Y_{k+1} z_k(u)\cdot \mathcal{E}_{\Tc^-}^{\mu} e_1\ot \cdots \ot e_m\\
&=\prod_{l=1}^{k}\frac{uq^{\mu_l-l+1}-u^{-1}q^{-\mu_l+l-1}}{q-q^{-1}}
Y_m\cdots Y_{k+1} \mathcal{E}_{\Tc^-}^{\mu} e_1\ot \cdots \ot e_m\\
&=\prod_{l=1}^{k}\frac{uq^{\mu_l-l+1}-u^{-1}q^{-\mu_l+l-1}}{q-q^{-1}}
  \mathcal{E}_{\Tc}^{\lambda} e_1\ot \cdots \ot e_m,
\eal
\eeq
where $Y_m,\ldots,Y_{k+1}$ are some rational functions at $u=y_m,\ldots,u=y_{k+1}$ and $y_m,\ldots,y_{k+1}$ are the Jucys-Murphy elements of $\Hc_m$. And $\Tc^-$ is the standard tableau obtained from $\Tc$ by removing the boxes occupied by $k+1,\ldots,m$, denote by $\mu$ the shape of $\Tc^-$.

Hence by corresponding \eqref{pattern-tableau-corr},
\beq
\sqrt{c_{\lambda}} \mathcal{E}_{\Tc}^{\lambda}\cdot e_1\ot \cdots \ot e_m= \xi_{\Tc}\ot v_{\Tc} .
\eeq
\end{proof}

For any $I=(1^{\mu_1} ,\ldots   ,n^{\mu_m})=(i_1, \ldots, i_m)$, denote $\mu=(\mu_1,\ldots,\mu_m)$.
For any standard partition $\lambda\vdash m$, the set of all Young tableaux of shape $\lambda$ is denoted
as
$\mathrm{YT}(\lambda)$.
The set of semistandard Young tableaux of shape  $\lambda$ with entries at most $n$ is denoted by $\mathrm{SSYT}(\lambda, n).$
The set of standard Young tableaux of shape
$\lambda$ is denoted as  $\mathrm{SYT}(\lambda).$

We define a map
\beq
\bal
\theta_{\mu}: \mathrm{SYT}(\lambda) \rightarrow \mathrm{YT}(\lambda)\\
\eal
\eeq by replacing $r$ in $\Tc$  by $i_r$ for $1 \leq r \leq m$.
Then $\mathrm{SSYT}(\lambda, n)$ is contained in the image of $\theta_{\mu}$.
\begin{lemma}\label{schur-weyl-corr}
Let $I=(i_1 \leq \ldots \leq i_m)$ with $m_i(I)=\mu_i$, then
\beq
\mathcal{E}_{\Tc}^{\lambda}\cdot e_{i_1}\ot \cdots \ot e_{i_m}=\left\{\begin{array}{cc}
c\cdot \xi_{\theta_{\mu}(\Tc)}\ot v_{\Tc}&\text{ if } \theta_{\mu}(\Tc) \in \mathrm{SSYT}(\lambda, n),\\
0&\text{ if }\theta_{\mu}(\Tc) \notin \mathrm{SSYT}(\lambda, n),
\end{array}\right.
\eeq
where $c$ is some nonzero constant.
\end{lemma}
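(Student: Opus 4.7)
The plan is to combine the Schur-Weyl-Jimbo duality with induction on $m$ via the Jucys-Murphy recurrence for the primitive idempotent $\mathcal{E}_{\Tc}^{\lambda}$, reducing the inductive step to the quantum Pieri branching rule for $U_q(\mathfrak{gl}_n)$.

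First, I would argue that $\mathcal{E}_{\Tc}^{\lambda}\cdot e_{i_1}\ot\cdots\ot e_{i_m}$ lies in $(U^{\lambda})_{\mu}\otimes \mathbb{C} v_{\Tc}$. By Schur-Weyl-Jimbo, $\mathcal{E}_{\Tc}^{\lambda}$ projects $(\mathbb{C}^n)^{\otimes m}$ onto the isotypic line $U^{\lambda}\otimes \mathbb{C} v_{\Tc}$. Since $e_I:=e_{i_1}\ot\cdots\ot e_{i_m}$ has weight $\mu$ and the $\mathcal{H}_m$-action commutes with the $U_q(\mathfrak{gl}_n)$-action, the image retains weight $\mu$. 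Hence $\mathcal{E}_{\Tc}^{\lambda}\cdot e_I = \eta\otimes v_{\Tc}$ for some $\eta \in (U^{\lambda})_{\mu}$, and the task reduces to identifying $\eta$.

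Next, I would induct on $m$, with trivial base case $m=1$. For the inductive step, let $\Tc^{-}$ be $\Tc$ with the box $\alpha$ containing $m$ removed, let $\nu$ be its shape, let $c_\alpha$ be the content of $\alpha$, and let $\{a_j\}$ be the contents of the other addable boxes of $\nu$. Since $y_m$ commutes with the parabolic subalgebra $\mathcal{H}_{m-1}$, the recurrence stated before Lemma~\ref{lemma1} gives
\[
\mathcal{E}_{\Tc}^{\lambda}\cdot e_I = \frac{\prod_j(y_m-q^{2a_j})}{\prod_j(q^{2c_\alpha}-q^{2a_j})}\cdot\bigl((\mathcal{E}_{\Tc^{-}}^{\nu}\cdot e_{I'})\otimes e_{i_m}\bigr),
\]
where $I'=(i_1,\ldots,i_{m-1})$ has weight $\mu'$. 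By the inductive hypothesis, $\mathcal{E}_{\Tc^{-}}^{\nu}\cdot e_{I'}$ either vanishes (if $\theta_{\mu'}(\Tc^{-})\notin\mathrm{SSYT}(\nu,n)$) or equals a nonzero multiple of $\xi_{\theta_{\mu'}(\Tc^{-})}\otimes v_{\Tc^{-}}$. In the nontrivial case, I would expand $(\xi_{\theta_{\mu'}(\Tc^{-})}\otimes v_{\Tc^{-}})\otimes e_{i_m}$ under the branching $U^{\nu}\otimes \mathbb{C}^n = \bigoplus_\kappa U^{\kappa}$, where $\kappa$ runs over partitions obtained from $\nu$ by adding an addable box $\alpha_\kappa$, as $\sum_\kappa \xi_\kappa\otimes v_{\Tc(\kappa)}$, with $\Tc(\kappa)$ extending $\Tc^{-}$ by placing $m$ at $\alpha_\kappa$ and $\xi_\kappa\in U^{\kappa}$ the branching component of $\xi_{\theta_{\mu'}(\Tc^{-})}\otimes e_{i_m}$. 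Since addable boxes of $\nu$ have pairwise distinct contents, the Jucys-Murphy polynomial kills every summand with $\alpha_\kappa\neq\alpha$, leaving $\mathcal{E}_{\Tc}^{\lambda}\cdot e_I = c'\cdot \xi_\lambda\otimes v_{\Tc}$ for some nonzero $c'$.

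Finally, the quantum Pieri rule identifies $\xi_\lambda$ with a nonzero scalar multiple of the Gelfand-Tsetlin vector $\xi_{\theta_\mu(\Tc)}$ precisely when appending $i_m$ to $\theta_{\mu'}(\Tc^{-})$ at the box $\alpha$ produces a semistandard tableau, and with $0$ otherwise. The main obstacle is formalizing this quantum Pieri step: verifying that the $U^{\lambda}$-component of $\xi_{\theta_{\mu'}(\Tc^{-})}\otimes e_{i_m}$ under $U^{\nu}\otimes\mathbb{C}^n = \bigoplus_\kappa U^{\kappa}$ realizes $\xi_{\theta_\mu(\Tc)}$ up to a nonzero scalar exactly when the extension is semistandard, and vanishes otherwise. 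Via \eqref{pattern-tableau-corr}, this reduces to matching the combinatorial condition that the added entry $i_m$ at $\alpha$ respects strict column and weak row increase with the nonvanishing of the corresponding Gelfand-Tsetlin component in the Pieri decomposition.
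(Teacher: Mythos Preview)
Your approach differs substantially from the paper's. The paper handles the two cases separately: for $\theta_\mu(\Tc)\notin\mathrm{SSYT}(\lambda,n)$ it invokes the fusion procedure \eqref{fusion-procedure} directly, exhibiting a factor $T_r(q^{2c_r},q^{2c_t})=\check{R}_r-q$ that annihilates $e_I$ when two equal entries sit in the same column; for the nonvanishing case it takes Lemma~\ref{lemma1} (the special vector $e_1\otimes\cdots\otimes e_m$ with $m\leq n$) as a base point and then applies explicit products of $E_k,F_k$ to transport $e_1\otimes\cdots\otimes e_m$ to $e_I$, tracking the Gelfand--Tsetlin vector via the GT formulas \cite{UTS1,UTS2}. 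Your route---induction on $m$ through the Jucys--Murphy recurrence and the Pieri branching $U^\nu\otimes\mathbb{C}^n=\bigoplus_\kappa U^\kappa$---is conceptually clean and correctly isolates the problem.

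However, your ``quantum Pieri step'' is a genuine gap, not a routine verification. The Gelfand--Tsetlin vectors $\xi_\Lambda$ are characterized by the chain $U_q(\mathfrak{gl}_1)\subset\cdots\subset U_q(\mathfrak{gl}_n)$ (Lemma~\ref{central-char}), which is a branching in the rank of the quantum group, whereas your induction uses the tensor-product branching in the number of tensor factors. The assertion that the $U^\lambda$-component of $\xi_{\theta_{\mu'}(\Tc^-)}\otimes e_{i_m}$ is exactly the GT vector $\xi_{\theta_\mu(\Tc)}$ (up to nonzero scalar), and vanishes precisely when the box above $\alpha$ already carries $i_m$, is a compatibility statement between these two unrelated branchings; it does not follow formally from the dictionary \eqref{pattern-tableau-corr}. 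When $\dim(U^\lambda)_\mu>1$ the weight alone cannot single out a GT vector, so you must actually compute the action of the $z_k(u)$ on that component---which amounts to redoing the paper's work. The paper avoids this entirely by never touching the Pieri branching: it moves within a fixed $U^\lambda$ using $E_k,F_k$, whose effect on GT vectors is given by known explicit formulas.
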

\begin{proof}
To start with, if $\theta_{\mu}(\Tc) \notin \mathrm{SSYT}(\lambda, n)$, then there exists $r,t$ are in the same column of $\Tc$ and $i_r=i_t$. We can assume $1 \leq r<t \leq m$ and $c_r(\Tc)-c_t(\Tc)=1$. So $i_r=i_{r+1}=\ldots=i_t$. Moreover, by the  recurrence relation and fusion procedure \eqref{fusion-procedure} of $\Ec_{\Tc}^{\lambda}$, we have that
\beq
\bal
&\Ec_{\Tc}^{\lambda}\cdot e_{i_1}\ot \cdots \ot e_{i_m}\\
&=Y_m\cdots Y_{t+1}\Ec_{\Tc^{-}}^{\mu} \cdot e_{i_1}\ot \cdots \ot e_{i_m}\\
&=Y_m\cdots Y_{t+1}\frac{1}{c_{\mu^T}}\phi_{\Tc^{-}}(z_1,\ldots,z_t)T_0^{-1}|_{z_1=1}\cdots |_{z_t=1}\cdot e_{i_1}\ot \cdots \ot e_{i_m},
\eal
\eeq
where  $Y_m,\ldots,Y_{t+1}$ are some rational functions at $u=y_m,\ldots,u=y_{t+1}$ and $y_m,\ldots,y_{t+1}$ are  Jucys–Murphy elements of $\Hc_m$. And $\Tc^-$ is the standard tableau obtained from $\Tc$ by removing the boxes occupied by $t+1,\ldots,m$, denote by $\mu$ the shape of $\Tc^-$.

From relation \eqref{T0-relation}, we can rewrite the fusion procedure as
\beq
\bal
\Ec_{\Tc^{-}}^{\mu} =&\frac{1}{c_{\mu^T}} T_0^{-1} \phi'_{\Tc^{-}}(z_1,\ldots,z_{t-1})\\
&\times T_1(q^{2c_1}z_1,q^{2c_{t}}z_t) \ldots T_{t-1}(q^{2c_{t-1}}z_{t-1},q^{2c_{t}}z_t)|_{z_1=1}\cdots |_{z_t=1}\\
=&\frac{1}{c_{\mu^T}} T_0^{-1} \phi'_{\Tc^{-}}(z_1,\ldots,z_{t-1})T_1(q^{2c_1}z_1,q^{2c_{t}}z_t) \ldots\\ &\cdot T_{r-1}(q^{2c_{r-1}}z_{r-1},q^{2c_{t}}z_t)|_{z_1=1}\cdots |_{z_t=1}\\
&\times T_{r}(q^{2c_{r}},q^{2c_{t}}) \ldots T_{t-1}(q^{2c_{t-1}},q^{2c_{t}}),
\eal
\eeq
where $\phi'_{\Tc^{-}}(z_1,\ldots,z_{t-1})$ is some $\Hc_{t-1}$-valued rational function  determined by $\phi_{\Tc^{-}}$. Note that
\ben
T_{r}(q^{2c_{r}},q^{2c_{t}})=T_r+\frac{q-q^{-1}}{q^{2(c_t-c_r)}-1}=T_r-q,
\een
and
\ben
\bal
&T_{r}(q^{2c_{r}},q^{2c_{t}}) \ldots T_{t-1}(q^{2c_{t-1}},q^{2c_{t}}) \cdot e_{i_1}\ot \cdots \ot e_{i_m}\\
&=const\cdot  (\check{R}_r-q)\cdot e_{i_1}\ot \cdots \ot e_{i_m}\\
&=0.
\eal
\een
Hence, $\Ec_{\Tc}^{\lambda}\cdot e_{i_1}\ot \cdots \ot e_{i_m}=0$ if $\theta_{\mu}(\Tc) \notin \mathrm{SSYT}(\lambda, n)$.

It follows form Lemma \ref{lemma1} that for $m\leq n$  and $\mu=(1^m)$,
\beq
\sqrt{c_{\lambda}} \mathcal{E}_{\Tc}^{\lambda}\cdot e_1\ot \cdots \ot e_m=\xi_{\Tc}\ot v_{\Tc} .
\eeq
(1) For $m\leq n$ and any $I=(i_1 \leq \ldots \leq i_m)$ with $m_i(I)=\mu_i$,
\beq
\bal
&\left(\overset{\rightarrow}{\prod}_{i_r>r}F_{i_r-1}\cdots F_r \right) \left(\overset{\leftarrow}{\prod}_{i_t<t}E_{i_t}\cdots E_{t-1}\right) e_1\ot \cdots \ot e_m\\
&=c\cdot e_{i_1}\ot \cdots \ot e_{i_m},
\eal
\eeq
where $c$ is some nonzero constant.

Moreover assume $t_0$ is the minimal such that $i_{t_0}<t_0$, then $i_{t_0-1}\geq t_0-1$. Since
$$
\begin{matrix}
\cdots & t_0-1 & < & t_0 & \cdots &  \\
 & \rotatebox{90}{$\geq$} & & \rotatebox{90}{$<$} &  & \\
\cdots  &i_{t_0-1} &\leq& i_{t_0}& \cdots &
\end{matrix}$$
so $i_{t_0-1}= i_{t_0}=t_0-1$.
According to coresponding \eqref{pattern-tableau-corr} and Gelfand-Tsetlin formula \cite{GT,UTS1,UTS2}, $E_{t_0-1}\xi_{\Tc}=0$ if $t_0-1,t_0$ are in the same column of $\Tc$. Otherwise,
\beq
\bal
E_{t_0-1}\xi_{\Tc}=c'\xi_{\theta_{\mu_0}(\Tc)},
\eal
\eeq
where $\mu_0=(1^{t_0-2},2,0,1^{m-t_0},0^{n-m})$ and $\theta_{\mu_0}(\Tc)$ is the unique semi-standard tableau of shape $\lambda$ and weight $\mu_0$ by replacing $t_0$ with $t_0-1$ in $\Tc$. Then
\ben
\bal
&\sqrt{c_{\lambda}} E_{t_0-1}\cdot \mathcal{E}_{\Tc}^{\lambda} e_1\ot \cdots \ot e_m\\
&=const\cdot \mathcal{E}_{\Lambda}^{\lambda} e_1\ot \cdots \ot e_{t_0-1}\ot e_{t_0-1} \ot e_{t_0+1}\ot \cdots \ot e_k\\
\eal
\een
On the other hand,
\ben
\bal
&E_{t_0-1}\cdot \xi_{\Tc}\ot v_{\Tc}=const\cdot \xi_{\theta_{\mu_0}(\Tc)}\ot v_{\Tc} .
\eal
\een
Here the constants are nonzero.
Therefore,
\ben
\bal
&\sqrt{c_{\lambda}} E_{t_0-1}\cdot \mathcal{E}_{\Tc}^{\lambda} e_1\ot \cdots \ot e_m=const\cdot \xi_{\theta_{\mu_0}(\Tc)}\ot v_{\Tc}
\eal
\een
for nonzero constant $c$.
Generally, if $i_{t}<t$, considering the order of actions $E_{i_t}\cdots E_{t-1}$ and these actions is not trivial, we have that the unique Gelfand-Tsetlin vector $\xi_{\Tc^{\lambda}_{\mu'}}$ and $\Tc^{\lambda}_{\mu'}$ is obtained from the semi-standard tableau in previous step by replacing  $t$ with $i_t$.

Finally, as the two actions of $\overset{\rightarrow}{\prod}_{i_r>r}F_{i_r-1}\cdots F_r $ and  $\overset{\leftarrow}{\prod}_{i_t<t}E_{i_t}\cdots E_{t-1}$ are commutative, we can show the similar results when $i_r>r$. Hence
\beq
\bal
&\left(\overset{\rightarrow}{\prod}_{i_r>r}F_{i_r-1}\cdots F_r \right) \left(\overset{\leftarrow}{\prod}_{i_t<t}E_{i_t}\cdots E_{t-1}\right)  \mathcal{E}_{\Tc}^{\lambda} e_1\ot \cdots \ot e_m\\
&=const\cdot \mathcal{E}_{\Lambda}^{\lambda} e_{i_1}\ot \cdots \ot e_{i_m}\\
\eal
\eeq
\beq
\bal
&\left(\overset{\rightarrow}{\prod}_{i_r>r}F_{i_r-1}\cdots F_r \right) \left(\overset{\leftarrow}{\prod}_{i_t<t}E_{i_t}\cdots E_{t-1}\right)  \mathcal{E}_{\Tc}^{\lambda}\cdot\xi_{\Tc}\ot v_{\Tc}\\
&=const\cdot \xi_{\theta_{\mu}(\Tc)}\ot v_{\Tc}.
\eal
\eeq
Both constants are nonzero.

(2) For $n<m$, as $U_q(\mathfrak{gl}_n)$ is a subalgebra of $U_q(\mathfrak{gl}_m)$, then the actions of $z_1(u),\ldots,z_n(u)$ unchanged in restricting  $U_q(\mathfrak{gl}_m)$ to $U_q(\mathfrak{gl}_n)$. Thus
\beq
\mathcal{E}_{\Tc}^{\lambda}\cdot e_{i_1}\ot \cdots \ot e_{i_m}=\left\{\begin{array}{cc}
c\cdot\xi_{\theta_{\mu}(\Tc)}\ot v_{\Tc} &\text{ if } \theta_{\mu}(\Tc) \in \mathrm{SSYT}(\lambda, n),\\
0&\text{ if } \theta_{\mu}(\Tc) \notin \mathrm{SSYT}(\lambda, n),
\end{array}\right.
\eeq
where $c$ is some nonzero constant.
\end{proof}

\begin{proof}[\textbf{Proof of theorem \ref{q-Kostant-thm}}]
First we note that if $\Tc$ is the unique pre-image of $\theta_{\mu}(\Tc)$, ( i.e. if $i_s=i_t$ in $\theta_{\mu}(\Tc)$, then $s,t$ are in the same row of $\Tc$), then
\beq
\bal
&\langle  \mathcal{E}_{\Tc}\cdot e_{i_1}\ot \cdots \ot e_{i_m}, \mathcal{E}_{\Tc'}\cdot e_{i_1}\ot \cdots \ot e_{i_m}\rangle\\
&=\langle  \mathcal{E}_{\Tc'}\mathcal{E}_{\Tc}\cdot e_{i_1}\ot \cdots \ot e_{i_m},e_{i_1}\ot \cdots \ot e_{i_m}\rangle\\
&=\delta_{\Tc\Tc'}\langle  \mathcal{E}_{\Tc}\cdot e_{i_1}\ot \cdots \ot e_{i_m}, e_{i_1}\ot \cdots \ot e_{i_m} \rangle \\
&=\delta_{\Tc\Tc'}\frac{1}{c_{\lambda}}
\langle  \sum_{\si \in \Sym_m}\langle T_{\si^{-1}}v_{\Tc},v_{\Tc}\rangle \check{R}_{\si}\cdot e_{i_1}\ot \cdots \ot e_{i_m}, e_{i_1}\ot \cdots \ot e_{i_m} \rangle \\
&=\delta_{\Tc\Tc'}\frac{1}{c_{\lambda}}
\langle  \sum_{\si \in \Sym_I}\langle T_{\si^{-1}}v_{\Tc},v_{\Tc}\rangle \check{R}_{\si}\cdot e_{i_1}\ot \cdots \ot e_{i_m}, e_{i_1}\ot \cdots \ot e_{i_m} \rangle.
\eal
\eeq
Using formula \eqref{Young-orthogonal-form}, we have that
\beq
\langle T_{\si^{-1}}v_{\Tc},v_{\Tc}\rangle=q^{l(\si)}.
\eeq
Hence,
\beq
\bal
\langle  \mathcal{E}_{\Tc}\cdot e_{i_1}\ot \cdots \ot e_{i_m}, \mathcal{E}_{\Tc'}\cdot e_{i_1}\ot \cdots \ot e_{i_m}\rangle=\delta_{\Tc\Tc'}\frac{m_{q^2}(I)}{c_{\lambda}}.
\eal
\eeq
By lemma \ref{schur-weyl-corr},
\beq
\sqrt{\frac{c_{\lambda}}{m_{q^2}(I)}}\mathcal{E}_{\Tc}\cdot e_{i_1}\ot \cdots \ot e_{i_m}= \xi_{\theta_{\mu}(\Tc)}\ot v_{\Tc}.
\eeq

Moreover for any standard Young tableaux of shape $\lambda$,
 if $\theta_{\mu}(\Tc_1)=\cdots=\theta_{\mu}(\Tc_s)$ ($s>1$), then
\begin{align*}
&\langle  \sqrt{\frac{c_{\lambda}}{m_{q^2}(I)}}\sum_{k=1}^s\mathcal{E}_{\Tc_k}\cdot e_{i_1}\ot \cdots \ot e_{i_m}, \sqrt{\frac{c_{\lambda}}{m_{q^2}(I)}}\sum_{k=1}^s\mathcal{E}_{\Tc_k}\cdot e_{i_1}\ot \cdots \ot e_{i_m}\rangle\\
&=\frac{c_{\lambda}}{m_{q^2}(I)}\langle  \sum_{k=1}^s\mathcal{E}_{\Tc_k}\cdot e_{i_1}\ot \cdots \ot e_{i_m},e_{i_1}\ot \cdots \ot e_{i_m}\rangle\\
&=\frac{1}{m_{q^2}(I)}\langle \sum_{k=1}^s\sum_{\si \in \Sym_m}\langle T_{\si^{-1}} v_{\Tc_k},v_{\Tc_k}\rangle \check{R}_{\si}\cdot e_{i_1}\ot \cdots \ot e_{i_m}, e_{i_1}\ot \cdots \ot e_{i_m} \rangle \\
&=\frac{1}{m_{q^2}(I)}\langle \sum_{k=1}^s\sum_{\si \in \Sym_I}\langle T_{\si^{-1}} v_{\Tc_k},v_{\Tc_k}\rangle \check{R}_{\si}\cdot e_{i_1}\ot \cdots \ot e_{i_m}, e_{i_1}\ot \cdots \ot e_{i_m} \rangle \\
&=\frac{1}{m_{q^2}(I)} \sum_{\si \in \Sym_I} \sum_{k=1}^s\langle T_{\si^{-1}} v_{\Tc_k},v_{\Tc_k}\rangle q^{l(\si)}.
\end{align*}
Since the subspace $V=\text{span}_{\mathbb{C}(q)}\{v_{\Tc_k}, 1 \leq k \leq s\}$ is a finite-dimensional module of $\Hc_{\mu}=\Hc_{\mu_1}\times \cdots \times \Hc_{\mu_m}$ and it is isomorphic to the tensor product of skew representation of $\Hc_{\mu_i}$, $ 1 \leq i \leq m$. As $\theta_{\mu}(\Tc_k)\neq 0$, the skew partition is disconnected and then it is a permutation module. Thus it contains the trivial representation $1_{\Hc_{\mu}}$  with multiplicity one. So
\beq
\bal
&\frac{1}{m_{q^2}(I)} \sum_{\si \in \Sym_I} \sum_{k=1}^s\langle T_{\si^{-1}} v_{\Tc_k},v_{\Tc_k}\rangle q^{l(\si)}\\
&=\langle \chi_q^V, 1_{\Hc_{\mu}}  \rangle\\
&=1.
\eal
\eeq
By Lemma \ref{schur-weyl-corr} we can know that
\beq
\sqrt{\frac{c_{\lambda}}{m_{q^2}(I)}}\mathcal{E}_{\Tc_k}\cdot e_{i_1}\ot \cdots \ot e_{i_m}=a_k \xi_{\theta_{\mu}(\Tc_1)}\ot v_{\Tc_k}.
\eeq
where $\sum_{k=1}^s |a_k|^2=1$.

So the right side of equation \eqref{general-Kostant-iden} equals to
\begin{align*}
&tr(\mathcal P_{\mu}\ot 1)\circ\Delta\circ \mathcal P_{\mu}|_{U^{\lambda}}\\
&=\langle  \sqrt{\frac{c_{\lambda}}{m_{q^2}(I)}}(\mathcal P_{\mu}\ot 1) \circ \Delta \circ\sum_{\Tc}\mathcal{E}_{\Tc}\cdot e_{i_1}\ot \cdots \ot e_{i_m}, \sqrt{\frac{c_{\lambda}}{m_{q^2}(I)}}\sum_{\Tc}\mathcal{E}_{\Tc}\cdot e_{i_1}\ot \cdots \ot e_{i_m}\rangle\\
&=\frac{c_{\lambda}}{m_{q^2}(I)}\langle  (\mathcal P_{\mu}\ot 1) \circ \Delta \circ\sum_{\Tc}\mathcal{E}_{\Tc}\cdot e_{i_1}\ot \cdots \ot e_{i_m}, e_{i_1}\ot \cdots \ot e_{i_m}\rangle.
\end{align*}

Since by the Equations \eqref{PR} and  \eqref{e:char}, we have that
\begin{align*}
&\frac{c_{\lambda}}{m_{q^2}(I)}  (\mathcal P_{\mu}\ot 1) \circ \Delta \circ\sum_{\Tc}\mathcal{E}_{\Tc}\cdot e_{i_1}\ot \cdots \ot e_{i_m}\\
&=\frac{1}{m_{q^2}(I)} (\mathcal P_{\mu}\ot 1)\circ\Delta\circ\sum_{\tau\in \Sym_I,\atop \mu \in \mathcal{M}(\Sym_m/\Sym_I)} \chi_q^{\lambda}(T_{\mu^{-1}\tau}) \check{R}_{\mu^{-1}} \check{R}_{\tau} \cdot e_{i_1}\ot \cdots \ot e_{i_m}\\
&=\frac{1}{m_{q^2}(I)} (\mathcal P_{\mu}\ot 1)\circ\Delta\sum_{\tau\in \Sym_I,\atop \mu \in \mathcal{M}(\Sym_m/\Sym_I)} \chi_q^{\lambda}(T_{\mu^{-1}\tau})q^{l(\tau)} e_{i_{\mu_1}}\ot \cdots \ot e_{i_{\mu_m}}\\
&=\frac{1}{m_{q^2}(I)}  \sum_{(j_1,\ldots,j_m)}\sum_{\tau\in \Sym_I,\atop \mu \in \mathcal{M}(\Sym_m/\Sym_I)} \chi_q^{\lambda}(T_{\mu^{-1}\tau})q^{l(\tau)}
(\mathcal P_{\mu}\ot 1) e_{j_1}\ot \cdots \ot e_{j_m}\ot x_{j_1,i_{\mu_1}}\cdots x_{j_m,i_{\mu_m}}  \\
&=\frac{1}{m_{q^2}(I)}  \sum_{\si \in \Sym_m}\sum_{\tau\in \Sym_I,\atop \mu \in \mathcal{M}(\Sym_m/\Sym_I)} \chi_q^{\lambda}(T_{\mu^{-1}\tau})q^{l(\tau)}
 e_{i_{\si_1}}\ot \cdots \ot e_{i_{\si_m}}\ot x_{i_{\si_1},i_{\mu_1}}\cdots x_{i_{\si_m},i_{\mu_m}},
\end{align*}
where the first sum in the fourth line runs over all sequence $(j_1\ldots,j_m)$ of $[n]$.

As the two actions of $U_q(\mathfrak{gl}_n)$ and  $\Hc_m$ are commutative, so
\beq
\bal
&tr(\mathcal P_{\mu}\ot 1)\circ\Delta\circ \mathcal P_{\mu}|_{U^{\lambda}}\\
&=\frac{c_{\lambda}}{m_{q^2}(I)}\langle  (\mathcal P_{\mu}\ot 1) \circ \Delta \circ\sum_{\Tc}\mathcal{E}_{\Tc}\cdot e_{i_1}\ot \cdots \ot e_{i_m}, e_{i_1}\ot \cdots \ot e_{i_m}\rangle\\
&=\frac{1}{m_{q^2}(I)} \sum_{\tau\in \Sym_I,\atop \mu \in \mathcal{M}(\Sym_m/\Sym_I)} \chi_q^{\lambda}(T_{\mu^{-1}\tau})q^{l(\tau)}
 x_{i_1,i_{\mu_1}}\cdots x_{i_{m},i_{\mu_m}}\\
&=\frac{1}{m_{q^2}(I)} \langle i_{1},\ldots ,i_{m}\mid \chi_q^{\lambda}X_1\cdots X_m \mid i_{1},\ldots,i_m\rangle\\
&=\frac{\Imm_{\chi_q^{\lambda}}(X_I)}{m_{q^2}(I)}.
\eal
\eeq
which completes the proof.
\end{proof}

\section{Quantum Littlewood correspondence I and II}
We have the quantum analog of the Littlewood correspondence I \cite{Li}[p. 118].
\begin{theorem}\label{LittlewoodI}
   Corresponding to any relation between Schur functions of total order $n$, we may replace each Schur function by the corresponding quantum immanant of complementary principal minors of the generator matrix $X$ of $A_q(\Mat_n)$ provided that every product is summed for all sequences of pairwise disjoint increasing ordered subsets of $[n]$.
\end{theorem}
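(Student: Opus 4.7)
The plan is to reduce Theorem~\ref{LittlewoodI} to the following key multiplicativity identity: for any tuple of partitions $\boldsymbol{\lambda}=(\lambda^1,\ldots,\lambda^k)$ with $\sum_j |\lambda^j|=n$, one should establish
\begin{equation}\label{e:plan-key-I}
\sum_{(I_1,\ldots,I_k)}\Imm_{\chi_q^{\lambda^1}}(X_{I_1})\cdots \Imm_{\chi_q^{\lambda^k}}(X_{I_k}) = \Imm_{\chi_q^{\boldsymbol{\lambda}}}(X),
\end{equation}
where the sum runs over all tuples of pairwise disjoint increasing subsets $I_j\subset[n]$ with $|I_j|=|\lambda^j|$ (so $\bigsqcup_j I_j=[n]$), and $\chi_q^{\boldsymbol{\lambda}}$ is the induced character $\chi^{\mathrm{Ind}(V^{\lambda^1}\otimes\cdots\otimes V^{\lambda^k})}$ described in Proposition~\ref{induced-pro}.

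Granting \eqref{e:plan-key-I}, the theorem follows quickly. On the symmetric function side the Littlewood--Richardson rule gives $s_{\lambda^1}\cdots s_{\lambda^k}=\sum_\nu c^\nu_{\boldsymbol{\lambda}}\,s_\nu$, and the parallel branching of induced Hecke modules into irreducibles yields $\chi_q^{\boldsymbol{\lambda}}=\sum_\nu c^\nu_{\boldsymbol{\lambda}}\,\chi_q^\nu$. Combined with \eqref{e:plan-key-I} and the single-partition case ($k=1$, $I=[n]$), this says
\[
\sum_{(I_1,\ldots,I_k)}\prod_j \Imm_{\chi_q^{\lambda^j}}(X_{I_j}) = \sum_\nu c^\nu_{\boldsymbol{\lambda}}\,\Imm_{\chi_q^\nu}(X),
\]
so that the assignment $s_{\lambda^1}\cdots s_{\lambda^k}\mapsto\sum_{(I_j)}\prod_j \Imm_{\chi_q^{\lambda^j}}(X_{I_j})$ is compatible with the unique Schur expansion in the degree-$n$ component of the ring of symmetric functions. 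Therefore any polynomial relation among Schur functions of total order $n$ lifts to the claimed relation among sums of quantum immanants of complementary principal minors.

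For \eqref{e:plan-key-I} itself, I would expand via Proposition~\ref{induced-pro},
\[
\chi_q^{\boldsymbol{\lambda}}=\frac{1}{c_{\boldsymbol{\lambda}}}\sum_{\sigma\in\mathfrak S_n} T_\sigma\bigl(\chi_q^{\lambda^1}\otimes\cdots\otimes \chi_q^{\lambda^k}\bigr)T_{\sigma^{-1}},\qquad c_{\boldsymbol{\lambda}}=\prod_j c_{\lambda^j},
\]
with $\chi_q^{\lambda^j}$ acting on the $j$th consecutive block of positions, and substitute into $\Imm_{\chi_q^{\boldsymbol{\lambda}}}(X)=\langle 1,\ldots,n|\chi_q^{\boldsymbol{\lambda}}X_1\cdots X_n|1,\ldots,n\rangle$. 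Using the RTT relation $\check R X_1\cdots X_n=X_1\cdots X_n\check R$, one pushes $\check R_{\sigma^{-1}}$ past $X_1\cdots X_n$ to the right. Since the indices $1,\ldots,n$ are all distinct, one has $\check R_{\sigma^{-1}}|1,\ldots,n\rangle=|\sigma^{-1}(1),\ldots,\sigma^{-1}(n)\rangle$ with no extra scalar (cf.\ the $\check R_\sigma$-action used in Proposition~\ref{imm-char}). The summand then depends on $\sigma$ only through the induced partition of $[n]$ into consecutive blocks $(I_1,\ldots,I_k)$ of sizes $(|\lambda^1|,\ldots,|\lambda^k|)$ together with an internal ordering within each block.

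The final combinatorial step decomposes the sum over $\mathfrak S_n$ using the coset decomposition $\mathfrak S_n=\bigsqcup_{(I_j)}\tau_{(I_j)}\cdot(\mathfrak S_{|\lambda^1|}\times\cdots\times\mathfrak S_{|\lambda^k|})$, where $\tau_{(I_j)}$ is the minimal-length representative sending the consecutive blocks to $(I_1,\ldots,I_k)$. The inner sum over $\mathfrak S_{|\lambda^1|}\times\cdots\times\mathfrak S_{|\lambda^k|}$ (conjugating $\chi_q^{\lambda^1}\otimes\cdots\otimes\chi_q^{\lambda^k}$ and evaluating at the matched indices) block-wise reconstructs, via Proposition~\ref{imm-char}, the product $\prod_j c_{\lambda^j}\,\Imm_{\chi_q^{\lambda^j}}(X_{I_j})$; the $c_{\lambda^j}$ factors cancel the $c_{\boldsymbol{\lambda}}^{-1}$ prefactor exactly. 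The main obstacle is this final bookkeeping: since $\chi_q^{\lambda^j}$ already contains a full $\mathfrak S_{|\lambda^j|}$-symmetrization (see \eqref{e:char}), one must verify that the coset-internal symmetrization does not introduce double-counting, and that no parasitic $q$-factors arise when the increasing-order convention on each $I_j$ is imposed.
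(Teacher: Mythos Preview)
Your approach is correct and follows essentially the same route as the paper's proof: compute the immanant of the induced character, decompose the sum over $\mathfrak S_n$ via minimal-length coset representatives for the Young subgroup, use the RTT relation together with the clean $\check R$-action on strictly increasing index tuples, and match against the Littlewood--Richardson expansion. The one difference is that the paper conjugates a single primitive idempotent $\mathcal E^{\mu}\mathcal E^{\nu}$ (invoking Proposition~\ref{ch formula}) rather than the full block character $\chi_q^{\lambda^1}\otimes\cdots\otimes\chi_q^{\lambda^k}$; this sidesteps the double-counting and Schur-element bookkeeping you flag, since the inner Young-subgroup sum then \emph{produces} $\chi_q^{\mu}\otimes\chi_q^{\nu}$ directly with no $c_{\boldsymbol{\lambda}}^{-1}$ prefactor to cancel, and the product $\Imm_{\chi_q^{\mu}}(X_{I_1})\Imm_{\chi_q^{\nu}}(X_{I_2})$ drops out of the immanant definition~\eqref{imm-def} rather than Proposition~\ref{imm-char}.
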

\begin{proof}
It is sufficient to prove the theorem for the product of two Schur functions. Suppose that $|\mu|+|\nu|=n$  and $$\mathbb{S}_{\mu}\mathbb{S}_{\nu}=\sum_{\lambda \vdash n}c_{\mu\nu}^{\lambda}\mathbb{S}_{\lambda}.$$
It follows from Proposition \ref{induced-pro} that
\begin{align*}
\chi_q^{\mathrm{Ind}(V^{\mu}\ot V^{\nu})}=\sum_{\si \in \mathfrak{S}_n}T_{\si}\mathcal{E}^{\mu}\mathcal{E}^{\nu}T_{\si^{-1}}=\sum_{\lambda}c_{\mu\nu}^{\lambda}\chi_q^{\lambda}.
\end{align*}
Then we have that
\begin{equation}
\begin{aligned}
&\Imm_{\chi_q^{\mathrm{Ind}(V^{\mu}\ot V^{\nu})}}(X) \\
   &=\langle 1,\ldots ,n
\mid \chi_q^{\mathrm{Ind}(V^{\mu}\ot V^{\nu})} X_1\cdots X_{n} \mid 1,\ldots,n\rangle\\
   &=\sum_{\sigma\in
\mathfrak{S}_n}\langle 1,\ldots ,n \mid \check{R}_{\si}\mathcal{E}^{\mu}\mathcal{E}^{\nu} \check{R}_{\si^{-1}} X_1\cdots X_{n} \mid 1,\ldots,n\rangle\\
  \end{aligned}
\end{equation}
Let $\mathcal M(\mathfrak{S}_n/\Sym_{|\mu|}\times \Sym_{|\nu|})$ be the minimal length coset representative of $\mathfrak{S}_n/\Sym_{|\mu|}\times \Sym_{|\nu|}$. Then
\begin{equation}
\begin{aligned}
\mathcal M(\mathfrak{S}_n/\Sym_{|\mu|}\times \Sym_{|\nu|})=\{\rho \in \Sym_n | \rho_1 <\cdots<\rho_{|\mu|},\rho_{|\mu|+1} <\cdots<\rho_{n}\}.
  \end{aligned}
\end{equation}
Since any element $\sigma \in
\mathfrak{S}_n $ can be written as $ \rho \tau$, where $\rho \in \mathcal M(\mathfrak{S}_n/\Sym_{|\mu|}\times \Sym_{|\nu|})$, $\Imm_{\chi_q^{\mathrm{Ind}(V^{\mu}\ot V^{\nu})}}(X) $ is equal to
\begin{equation}
\begin{aligned}
&\sum_{ \tau \in \Sym_{|\mu|}
\times \Sym_{|\nu|},\atop \rho \in \mathcal M(\mathfrak{S}_n/\Sym_{|\mu|}\times \Sym_{|\nu|})}\langle  \rho_1,\ldots,\rho_n \mid \check{R}_{\tau}\mathcal{E}^{\mu}\mathcal{E}^{\nu} \check{R}_{\tau^{-1}} X_1\cdots X_n\mid \rho_1, \ldots, \rho_n \rangle.\\
   &=\sum_{(I_1,I_2)}{\Imm}_{\chi_q^{\mu}}(X_{I_1}) {\Imm}_{\chi_q^{\nu}}(X_{I_2}),
\end{aligned}
\end{equation}
the sum on $(I_1,I_2)$ is taken over all sequences $(I_1,I_2)$ of disjoint increasing ordered subsets of $[n]$ satisfying $|I_1|=|\mu|$ and $|I_2|=|\nu|$.

On the other hand,
\begin{equation}
\begin{aligned}
\Imm_{\chi_q^{\mathrm{Ind}(V^{\mu}\ot V^{\nu})}}(X)
   &=\langle 1,\ldots ,n
\mid \chi_q^{\mathrm{Ind}(V^{\mu}\ot V^{\nu})} X_1\cdots X_{n} \mid 1,\ldots,n\rangle\\
   &=\sum_{\lambda}c_{\mu\nu}^{\lambda}\langle 1,\ldots ,n \mid \chi_q^{\lambda} X_1\cdots X_{n} \mid 1,\ldots,n\rangle\\
   &=\sum_{\lambda}c_{\mu\nu}^{\lambda}\Imm_{\chi_q^{\lambda}}(X).
\end{aligned}
\end{equation}
Hence,
\[
\sum_{(I_1,I_2)}{\Imm}_{\chi_q^{\mu}}(X_{I_1}) {\Imm}_{\chi_q^{\nu}}(X_{I_2})=\sum_{\lambda}c_{\mu\nu}^{\lambda} \Imm_{\chi_q^{\lambda}}(X).
\]
\end{proof}

Let  $\psi_q^{\mu }$ (resp. $\phi_q^{\mu}$) be the induced characters of the sign character (resp. trivial character) of the parabolic subalgebra $\mathcal{H}_{\mu}$ of $\mathcal{H}_n$.
They can be decomposeed into  the irreducible $\mathcal{H}_n$-characters:
\begin{align*}
 \psi_q^{\mu}=\sum_{\lambda}K_{\lambda^{T},\mu}\chi_q^{\lambda}, \ \ \
\phi_q^{\mu}=\sum_{\lambda}K_{\lambda,\mu}\chi_q^{\lambda},
\end{align*}
where $K_{\lambda,\mu}$ are the Kostka numbers.

As a special case of Littlewood correspondence I, we obtain the identities in \cite{KS}[Theorem 5.4] which can be viewed as the quantum analog of Littlewood-Merris-Watkins identities \cite{Li,MW}.
\begin{corollary}\label{LMW-iden1}
  Fix a partition $\lambda=(\lambda_1,\dots,\lambda_l)$ of $n$, then
\begin{equation}
\begin{aligned}
   &\Imm_{\psi_q^{\lambda}}(X)=\sum_{(I_1,\dots,I_l)}{\det}_q(X_{I_1})\cdots {\det}_q(X_{I_l}),\\
   &\Imm_{\phi_q^{\lambda}}(X)=\sum_{(I_1,\dots,I_l)}\per_q(X_{I_1})\cdots \per_q(X_{I_l}),
\end{aligned}
\end{equation}
where the sums are over all sequences $(I_1,\dots,I_l)$ of pairwise disjoint increasing ordered subsets of $[n]$ satisfying $|I_j|=\lambda_j$.
\end{corollary}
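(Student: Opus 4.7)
The plan is to derive both identities by specializing Theorem \ref{LittlewoodI} to an $l$-fold product rather than a product of two characters. The sign character of the parabolic subalgebra $\mathcal H_\lambda \cong \mathcal H_{\lambda_1}\otimes\cdots\otimes\mathcal H_{\lambda_l}$ of $\mathcal H_n$ is the outer tensor product $V^{(1^{\lambda_1})}\otimes\cdots\otimes V^{(1^{\lambda_l})}$, whose induction to $\mathcal H_n$ has character $\psi_q^\lambda$; analogously the trivial character of $\mathcal H_\lambda$ is $V^{(\lambda_1)}\otimes\cdots\otimes V^{(\lambda_l)}$, whose induction is $\phi_q^\lambda$. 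The first identity is obtained by identifying $\Imm_{\psi_q^\lambda}(X)$ with an $l$-factor version of the sum in Theorem \ref{LittlewoodI}, and the second by running the same argument with row shapes.

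First I would apply Proposition \ref{induced-pro} to write
\[
\psi_q^\lambda \;=\; \sum_{\rho\in \mathcal M(\mathfrak S_n/\mathfrak S_\lambda)} T_\rho\,\chi^V\,T_{\rho^{-1}}, \qquad \chi^V = \chi_q^{(1^{\lambda_1})}\cdots\chi_q^{(1^{\lambda_l})},
\]
where the factors of $\chi^V$ commute because they lie in disjoint parabolic subalgebras. Substituting into the definition \eqref{imm-def} of $\Imm_{\psi_q^\lambda}(X)$ with $I = J = (1,\ldots,n)$ and repeating the $\sigma = \rho\tau$ decomposition from the proof of Theorem \ref{LittlewoodI} (now with $\tau\in\mathfrak S_\lambda$), the action $\check R_\rho\mid 1,\ldots,n\rangle = \mid\rho_1,\ldots,\rho_n\rangle$ breaks the index tuple into consecutive sub-blocks of lengths $\lambda_1,\ldots,\lambda_l$, and these blocks run over exactly all ordered tuples $(I_1,\ldots,I_l)$ of pairwise disjoint increasing subsets of $[n]$ with $|I_j| = \lambda_j$.

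Since each factor of $\chi^V$ acts on its own block of tensor slots, the matrix element factorizes as $\prod_{i=1}^l \Imm_{\chi_q^{(1^{\lambda_i})}}(X_{I_i})$; the final step is to recognize that $\chi_q^{(1^k)}$ is the sign character of $\mathcal H_k$, so $\Imm_{\chi_q^{(1^k)}}(X_{I_i}) = \det_q(X_{I_i})$ by the standard $q$-antisymmetrizer construction of the quantum determinant. Running the same argument with trivial characters in place of sign characters, one replaces $(1^{\lambda_i})$ by $(\lambda_i)$ and obtains $\Imm_{\chi_q^{(k)}}(X_{I_i}) = \per_q(X_{I_i})$. The one piece of bookkeeping that requires care is the coset-representative bijection between $\mathcal M(\mathfrak S_n/\mathfrak S_\lambda)$ and ordered sequences $(I_1,\ldots,I_l)$ of disjoint increasing subsets of sizes $\lambda_1,\ldots,\lambda_l$ — this is the direct generalization of the $l=2$ case handled in Theorem \ref{LittlewoodI}, and is the main (though modest) technical point of the argument.
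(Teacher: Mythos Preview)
Your proposal is correct and follows essentially the same route as the paper: both arguments identify $\psi_q^{\lambda}$ (resp.\ $\phi_q^{\lambda}$) with the induced character of the sign (resp.\ trivial) representation of $\mathcal H_{\lambda}$ via Proposition~\ref{induced-pro}, and then use the coset decomposition $\mathfrak S_n = \mathcal M(\mathfrak S_n/\mathfrak S_{\lambda})\cdot\mathfrak S_{\lambda}$ from the proof of Theorem~\ref{LittlewoodI} to factor the immanant into the product of block determinants (resp.\ permanents). The only cosmetic difference is that the paper packages this as a direct application of Theorem~\ref{LittlewoodI} to the Schur-function relation $\mathbb S_{(1^{\lambda_1})}\cdots\mathbb S_{(1^{\lambda_l})}=\sum_{\mu}K_{\mu^T,\lambda}\mathbb S_{\mu}$, whereas you spell out the $l$-factor coset computation explicitly; since Theorem~\ref{LittlewoodI} is itself only written out for two factors, your more explicit treatment is arguably cleaner.
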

\begin{proof}
By proposition \ref{induced-pro}, we can obtain
\begin{equation}
\begin{aligned}
  \psi_q^{\lambda}&=\sum_{\si \in \mathfrak{S}_n}T_{\si}\mathcal{E}^{(1^{\lambda_1})}\cdots \mathcal{E}^{(1^{\lambda_l})}T_{\si^{-1}},\\
  \phi_q^{\lambda}&=\sum_{\si \in \mathfrak{S}_n}T_{\si}\mathcal{E}^{(\lambda_1)}\cdots \mathcal{E}^{(\lambda_l)}T_{\si^{-1}}.
\end{aligned}
\end{equation}
By the isomorphism between the algebra of symmetric
functions and the Grothendieck ring of representations of Hecke algebras, it follows the identity in symmetric functions:
$$\sum_{\mu}K_{{\mu}^{T},\lambda}\mathbb{S}_{\mu}=\mathbb{S}_{(1^{\lambda_1})}\cdots \mathbb{S}_{(1^{\lambda_l})}.$$
By the quantum Littlewood correspondence I, we obtain the first identity.
The second identity can be proved by the same argument.
\end{proof}

Considering submatrix $X_I$ with multiset $I$, we obtain the Littlewood correspondence II for generic matrices \cite{Li}[p. 120].
\begin{theorem}\label{LittlewoodII}
   Corresponding to any relation between Schur functions, we may replace each Schur function by the corresponding (normalized) quantum immanant of a principal minor of generator matrices $X$ of $A_q(\Mat_n)$ provided that we sum for all principal minors of the appropriate order with non-decreasing ordered multisets of $[n]$.
\end{theorem}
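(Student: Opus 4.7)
The plan is to deduce the theorem from Theorem \ref{q-Kostant-thm} by exploiting the bialgebra structure of $A_q(\Mat_n)$. For any finite-dimensional $U_q(\mathfrak{gl}_n)$-module $U$ with weight decomposition $U=\bigoplus_{\tilde\mu}U_{\tilde\mu}$, I would introduce
\begin{equation*}
\Phi^U := \sum_{\tilde\mu}\mathrm{tr}\bigl((\mathcal{P}_{\tilde\mu}\otimes 1)\circ\Delta\circ\mathcal{P}_{\tilde\mu}\bigr)\bigr|_U\in A_q(\Mat_n).
\end{equation*}
By Theorem \ref{q-Kostant-thm}, summed over $\tilde\mu$ (equivalently, over all non-decreasing multisets $I$ of $[n]$ of size $|\lambda|$ via $\tilde\mu_i=m_i(I)$), one has $\Phi^{U^\lambda}=\sum_I \Imm_{\chi_q^\lambda}(X_I)/m_{q^2}(I)$ for each irreducible module $U^\lambda$. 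The key observation is that $U\mapsto \Phi^U$ is additive on direct sums and multiplicative on tensor products, and hence induces a ring homomorphism from the representation ring of $U_q(\mathfrak{gl}_n)$ (identified with the ring of symmetric functions via $[U^\lambda]\mapsto \mathbb{S}_\lambda$) into $A_q(\Mat_n)$ that sends each Schur function to its associated sum of normalized quantum immanants. Any relation among Schur functions then transports directly to the corresponding relation among normalized quantum immanants.

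It suffices to verify this for a product of two Schur functions, $\mathbb{S}_\mu\mathbb{S}_\nu=\sum_\lambda c^\lambda_{\mu\nu}\mathbb{S}_\lambda$, since every Schur identity is generated by products. Additivity of $\Phi$ is immediate from linearity of trace; combined with the Clebsch--Gordan decomposition $U^\mu\otimes U^\nu\cong\bigoplus_\lambda(U^\lambda)^{\oplus c^\lambda_{\mu\nu}}$ (which transfers to $A_q(\Mat_n)$-comodules by the bialgebra pairing) this yields $\Phi^{U^\mu\otimes U^\nu}=\sum_\lambda c^\lambda_{\mu\nu}\,\Phi^{U^\lambda}$. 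For the multiplicativity $\Phi^{U^\mu\otimes U^\nu}=\Phi^{U^\mu}\cdot\Phi^{U^\nu}$, I would expand the trace in a weight basis $\{\xi^\mu_\Lambda\otimes\xi^\nu_{\Lambda'}\}$; since the $A_q(\Mat_n)$-coaction on a tensor product is multiplicative in the algebra slot, namely
\begin{equation*}
\Delta(\xi^\mu_\Lambda\otimes\xi^\nu_{\Lambda'})=\sum_{M,M'}(\xi^\mu_M\otimes\xi^\nu_{M'})\otimes a^{(\mu)}_{\Lambda M}\,a^{(\nu)}_{\Lambda' M'},
\end{equation*}
the diagonal entries $(M,M')=(\Lambda,\Lambda')$ automatically preserve total weight and contribute $\sum_{\Lambda,\Lambda'}a^{(\mu)}_{\Lambda\Lambda}\,a^{(\nu)}_{\Lambda'\Lambda'}=\Phi^{U^\mu}\cdot\Phi^{U^\nu}$, matching the left-hand side of the sought identity.

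The main obstacle will be the careful handling of the non-commutativity of $A_q(\Mat_n)$: the multiplicativity of the coaction fixes the product $a^{(\mu)}\,a^{(\nu)}$ in a specific order, and one must verify that this matches the order appearing in $\Phi^{U^\mu}\cdot\Phi^{U^\nu}$, a compatibility that follows from the bialgebra axioms but requires attention. An alternative route, closer in spirit to the proof of Theorem \ref{LittlewoodI}, is to start from $\chi_q^{\mathrm{Ind}(V^\mu\otimes V^\nu)}=\sum_\lambda c^\lambda_{\mu\nu}\chi_q^\lambda$ (Proposition \ref{induced-pro}), apply Proposition \ref{imm-char}, and decompose each element of $\mathfrak{S}_{|\mu|+|\nu|}$ as a coset representative times an element of $\mathfrak{S}_{|\mu|}\times\mathfrak{S}_{|\nu|}$; here, unlike in Theorem \ref{LittlewoodI}, one sums over all non-decreasing multisets rather than disjoint increasing subsets, and the normalization factor $m_{q^2}(I)$ is precisely what compensates for the Young subgroup $\mathfrak{S}_I$ stabilizing a multiset, allowing the combinatorial identities of Proposition \ref{imm-char} to convert the product of sums into the single induced-character sum.
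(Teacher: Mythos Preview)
Your primary approach via comodule characters is correct and genuinely different from the paper's proof. You are essentially observing that $\Phi^U$ is the character of $U$ as an $A_q(\Mat_n)$-comodule (the sum over weights $\tilde\mu$ simply reassembles the full trace), and that comodule characters are additive on direct sums and multiplicative on tensor products; together with Theorem~\ref{q-Kostant-thm} this yields the desired ring homomorphism $\mathbb{S}_\lambda\mapsto\sum_I\Imm_{\chi_q^\lambda}(X_I)/m_{q^2}(I)$. Your concern about the order of the product $a^{(\mu)}a^{(\nu)}$ is well-placed but, as you note, is resolved by the bialgebra axioms (the coaction on $(\mathbb{C}^n)^{\otimes(|\mu|+|\nu|)}$ is literally the tensor comodule coaction on $(\mathbb{C}^n)^{\otimes|\mu|}\otimes(\mathbb{C}^n)^{\otimes|\nu|}$).

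The paper instead takes exactly the ``alternative route'' you sketch at the end: it applies Proposition~\ref{induced-pro} to get $\chi_q^{\mathrm{Ind}(V^\mu\otimes V^\nu)}=\sum_\lambda c^\lambda_{\mu\nu}\chi_q^\lambda$, evaluates the immanant $\Imm_{\chi_q^{\mathrm{Ind}(V^\mu\otimes V^\nu)}}(X_I)$ for a \emph{fixed} non-decreasing multiset $I$, and uses Proposition~\ref{imm-char} plus the coset decomposition $\mathfrak{S}_n=\mathcal{M}(\mathfrak{S}_n/\mathfrak{S}_{|\mu|}\times\mathfrak{S}_{|\nu|})\cdot(\mathfrak{S}_{|\mu|}\times\mathfrak{S}_{|\nu|})$ to split it into a sum over decompositions $I=I_1\sqcup I_2$. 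This buys something your primary approach does not: the paper obtains, for each individual $I$, the finer identity
\[
\sum_{(I_1,I_2)}\frac{m_{q^2}(I)}{m_{q^2}(I_1)m_{q^2}(I_2)}\,\Imm_{\chi_q^{\mu}}(X_{I_1})\,\Imm_{\chi_q^{\nu}}(X_{I_2})=\sum_{\lambda}c_{\mu\nu}^{\lambda}\,\Imm_{\chi_q^{\lambda}}(X_I),
\]
whereas your character argument only yields this after summing over all $I$. Conversely, your approach is conceptually cleaner, avoids the combinatorics of coset representatives entirely, and as a by-product immediately shows that the elements $\sum_I\Imm_{\chi_q^\lambda}(X_I)/m_{q^2}(I)$ commute with one another (since $U^\mu\otimes U^\nu$ and $U^\nu\otimes U^\mu$ are isomorphic comodules via the braiding). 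The trade-off is that your route leans on the comparatively heavy Theorem~\ref{q-Kostant-thm}, while the paper's needs only the elementary Propositions~\ref{imm-char} and~\ref{induced-pro}.
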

\begin{proof}
  we only need to prove that the product of two Schur functions case, suppose that $|\mu|+|\nu|=n$  and $$\mathbb{S}_{\mu}\mathbb{S}_{\nu}=\sum_{\lambda \vdash n}c_{\mu\nu}^{\lambda}\mathbb{S}_{\lambda}.$$
Then from proposition \ref{induced-pro}, we have that
\begin{align*}
\chi_q^{\mathrm{Ind}(V^{\mu}\ot V^{\nu})}=\sum_{\si \in \mathfrak{S}_n}T_{\si}\mathcal{E}^{\mu}\mathcal{E}^{\nu}T_{\si^{-1}}=\sum_{\lambda}c_{\mu\nu}^{\lambda}\chi_q^{\lambda}.
\end{align*}
Hence,  for any non-decreasing ordered multisets $I=(1\leq i_1\leq \ldots \leq i_n \leq n)$ of $[n]$, we have that
\begin{equation}
\begin{aligned}
&\Imm_{\chi_q^{\mathrm{Ind}(V^{\mu}\ot V^{\nu})}}(X_I) \\
   &=\langle i_1,\ldots ,i_n
\mid \chi_q^{\mathrm{Ind} (V^{\mu}\ot V^{\nu})} X_1\cdots X_{n} \mid i_1,\ldots ,i_n\rangle\\
   &=\sum_{\sigma\in
\mathfrak{S}_n}\langle i_1,\ldots ,i_n \mid \check{R}_{\si}\mathcal{E}^{\mu}\mathcal{E}^{\nu} \check{R}_{\si^{-1}} X_1\cdots X_{n} \mid i_1,\ldots ,i_n\rangle\\
   &=\frac{m_{q^2}(I)}{m(I)}\sum_{\sigma\in\mathfrak{S}_n}\langle i_{\sigma_1},\dots ,i_{\sigma_n}\mid \mathcal{E}^{\mu} \mathcal{E}^{\nu}X_1\cdots X_n \mid i_{\sigma_1},\dots,i_{\sigma_n}\rangle.
\end{aligned}
\end{equation}

For each sequence $(I_1,I_2)$ of  non-decreasing ordered multisets of $[n]$ satisfying $|I_1|=|\mu|, \ |I_2|=|\nu|$ and the disjoint union of $I_1$ and $I_2$ is $I$,  there exists the minimal elements $\mu \in \mathcal M(\mathfrak{S}_n/\Sym_{|\mu|}\times \Sym_{|\nu|})$ such that $\mu^{-1} \cdot I= (I_1,I_2)$.
Hence, by Proposition  \ref{X_I}, we have that
\begin{equation*}
\begin{aligned}
&\Imm_{\chi_q^{\mathrm{Ind}(V^{\mu}\ot V^{\nu})}}(X_I) \\
&=\frac{m_{q^2}(I)}{m(I)}\sum_{ \tau \in \Sym_{|\mu|}\times \Sym_{|\nu|}, \atop \mu \in \mathcal M(\mathfrak{S}_n/\Sym_{|\mu|}\times \Sym_{|\nu|}) }\langle i_{\tau_{\mu_1}},\dots ,i_{\tau_{\mu_n}}\mid \mathcal{E}^{\mu} \mathcal{E}^{\nu} X_1 \cdots X_n \mid i_{\tau_{\mu_1}},\dots ,i_{\tau_{\mu_n}}\rangle\\
&=\frac{m_{q^2}(I)}{m(I)}\sum_{(I_1,I_2)}\frac{m(I)}{m_{q^2}(I_1)m_{q^2}(I_2)}{\Imm}_{\chi_q^{\mu}}(X_{I_1}) {\Imm}_{\chi_q^{\nu}}(X_{I_2})\\
&=\sum_{(I_1,I_2)}\frac{m_{q^2}(I)}{m_{q^2}(I_1)m_{q^2}(I_2)} \Imm_{\chi_q^{\mu}}(X_{I_1})
\Imm_{\chi_q^{\nu}}(X_{I_2}),
\end{aligned}
\end{equation*}
where the sums are over all sequences $(I_1,I_2)$ of  non-decreasing ordered multisets of $[n]$  and the disjoint union of $I_j, \ j=1,2$ is $I$.

On the other hand,
\begin{equation}
\begin{aligned}
\Imm_{\chi_q^{\mathrm{Ind}(V^{\mu}\ot V^{\nu})}}(X_I)
   &=\langle i_1,\ldots ,i_n
\mid \chi_q^{\mathrm{Ind}(V^{\mu}\ot V^{\nu})} X_1\cdots X_{n} \mid i_1,\ldots ,i_n \rangle\\
   &=\sum_{\lambda}c_{\mu\nu}^{\lambda}\langle i_1,\ldots ,i_n  \mid \chi_q^{\lambda} X_1\cdots X_{n} \mid i_1,\ldots ,i_n \rangle\\
   &=\sum_{\lambda}c_{\mu\nu}^{\lambda}\Imm_{\chi_q^{\lambda}}(X_I).
\end{aligned}
\end{equation}
Hence,
\[
\sum_{(I_1,I_2)}\frac{m_{q^2}(I)}{m_{q^2}(I_1)m_{q^2}(I_2)} \Imm_{\chi_q^{\mu}}(X_{I_1}) \Imm_{\chi_q^{\nu}}(X_{I_2}) =\sum_{\lambda}c_{\mu\nu}^{\lambda} \Imm_{\chi_q^{\lambda}}(X_I).
\]
\end{proof}

In particular, we have the quantum analogue of general Littlewood-Merris-Watkins identities \cite{KS,MW}.
\begin{corollary}
   Let $I=(1\leq i_1\leq \ldots \leq i_r \leq n)$ be an ordered multiset of $[n]$. Fix a partition $\lambda=(\lambda_1,\dots,\lambda_l)$ of $r$. Then
\begin{equation}\label{gene-qLMW}
\begin{aligned}
   &\Imm_{\psi_q^{\lambda}}(X_I)= \sum_{(I_1,\dots,I_l)}{\det}_q(X_{I_1})\cdots {\det}_q(X_{I_l}),\\
   &\Imm_{\phi_q^{\lambda}}(X_I)=\sum_{(I_1,\dots,I_l)}\frac{m_{q^2}(I)}{m_{q^2}(I_1)\cdots m_{q^2}(I_l)}\per_q(X_{I_1})\cdots \per_q(X_{I_l}),
\end{aligned}
\end{equation}
where the sums are taken over all sequences $(I_1,\dots,I_l)$ of non-decreasing ordered multisets of $[n]$ satisfying $|I_j|=\lambda_j$ and the disjoint union of $I_j, 1 \leq j \leq l$ is $I$.
\end{corollary}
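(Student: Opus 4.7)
The plan is to imitate the proof of Corollary~\ref{LMW-iden1}, using the multiset-valid quantum Littlewood correspondence (Theorem~\ref{LittlewoodII}) in place of Theorem~\ref{LittlewoodI}. Since $\psi_q^\lambda$ and $\phi_q^\lambda$ are by definition the characters of the $\mathcal{H}_n$-modules induced from the sign and trivial characters of the parabolic subalgebra $\mathcal{H}_\lambda=\mathcal{H}_{\lambda_1}\otimes\cdots\otimes\mathcal{H}_{\lambda_l}$, the classical Pieri identities
\begin{equation*}
\mathbb{S}_{(1^{\lambda_1})}\cdots \mathbb{S}_{(1^{\lambda_l})}=\sum_{\mu}K_{\mu^{T},\lambda}\,\mathbb{S}_\mu,\qquad \mathbb{S}_{(\lambda_1)}\cdots \mathbb{S}_{(\lambda_l)}=\sum_{\mu}K_{\mu,\lambda}\,\mathbb{S}_\mu
\end{equation*}
translate, under the isomorphism between the Grothendieck ring of Hecke algebras and the ring of symmetric functions, into the character decompositions $\psi_q^\lambda=\sum_{\mu}K_{\mu^{T},\lambda}\chi_q^\mu$ and $\phi_q^\lambda=\sum_{\mu}K_{\mu,\lambda}\chi_q^\mu$ already recorded above the statement.

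Next I would apply Theorem~\ref{LittlewoodII} iteratively on the number of factors. The theorem is stated for two-factor Schur products $\mathbb{S}_\mu\mathbb{S}_\nu$, but a straightforward induction on $l$ extends it: bracket $\mathbb{S}_{(1^{\lambda_1})}\cdots\mathbb{S}_{(1^{\lambda_{l-1}})}$ as a Schur-linear combination via the inductive hypothesis, apply Theorem~\ref{LittlewoodII} once more to multiply by $\mathbb{S}_{(1^{\lambda_l})}$, and compose the two-step splittings $I=I'\sqcup I_l$ and $I'=I_1\sqcup\cdots\sqcup I_{l-1}$ into an $l$-fold splitting $I=I_1\sqcup\cdots\sqcup I_l$. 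Each single application of Theorem~\ref{LittlewoodII} contributes a factor $m_{q^2}(I)/(m_{q^2}(I')\,m_{q^2}(I_l))$; when the inductive hypothesis is then substituted, its $m_{q^2}(I')$ cancels against the one produced at the current step, so the intermediate multiplicities telescope and leave precisely the combined weight $m_{q^2}(I)/\bigl(m_{q^2}(I_1)\cdots m_{q^2}(I_l)\bigr)$ appearing in~\eqref{gene-qLMW}. The corresponding combinatorial identity on Kostka numbers, $\sum_\nu K_{\nu^{T},\lambda'}\,c^{\mu}_{(1^{\lambda_1}),\nu}=K_{\mu^{T},\lambda}$, is what makes the character side collapse correctly through the induction.

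Finally, the single-part quantum immanants specialize as $\Imm_{\chi_q^{(1^k)}}(X_J)={\det}_q(X_J)$ and $\Imm_{\chi_q^{(k)}}(X_J)=\per_q(X_J)$, which is immediate from the definition~\eqref{imm-def} once one recognizes $\chi_q^{(1^k)}$ and $\chi_q^{(k)}$ as the $q$-antisymmetrizer and $q$-symmetrizer on $(\mathbb{C}^n)^{\otimes k}$, respectively. These substitutions turn the iterated correspondence directly into the two identities in~\eqref{gene-qLMW}. I expect the main delicacy to be the telescoping of the multiplicity weights across the $l-1$ inductive applications; once this cancellation of the intermediate $m_{q^2}(I')$ is verified, the rest of the argument follows formally, and the bookkeeping mirrors that of the proof of Corollary~\ref{LMW-iden1}. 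For the determinant identity one additionally notes that ${\det}_q(X_{I_j})$ vanishes whenever $I_j$ contains a repeated entry, so the sum is effectively restricted to splittings in which each $I_j$ is strictly increasing, which is where the simpler form of the first identity in~\eqref{gene-qLMW} comes from.
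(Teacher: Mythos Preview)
Your overall strategy coincides with the paper's: both invoke the quantum Littlewood correspondence II (Theorem~\ref{LittlewoodII}) in precisely the way Corollary~\ref{LMW-iden1} invoked Theorem~\ref{LittlewoodI}, and the paper's proof is literally the single sentence ``similar to the proof of Corollary~\ref{LMW-iden1}, we can prove these identities by the quantum Littlewood correspondence II.'' Your iteration with telescoping of the $m_{q^2}$ weights is a correct way to pass from the two-factor form of Theorem~\ref{LittlewoodII} to $l$ factors, so the second identity in~\eqref{gene-qLMW} follows exactly as you describe.

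There is, however, a genuine gap in your final paragraph about the first identity. Observing that $\det_q(X_{I_j})$ vanishes whenever $I_j$ has a repeated entry only forces $\prod_j m_{q^2}(I_j)=1$ on the surviving terms; it does \emph{not} eliminate the numerator $m_{q^2}(I)$ that Theorem~\ref{LittlewoodII} produces. A small check shows the factor is really there: for $I=(1,1)$ and $\lambda=(1,1)$ one has $\psi_q^{(1,1)}=\chi_q^{(2)}+\chi_q^{(1,1)}=2+(q-q^{-1})T_1$, and since $\check R\,|1,1\rangle=q\,|1,1\rangle$ this gives $\Imm_{\psi_q^{(1,1)}}(X_{(1,1)})=(1+q^2)\,x_{11}^2$, whereas the unique splitting $I_1=I_2=(1)$ contributes only $x_{11}^2$ on the right as printed. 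In other words, your method (and the paper's sketch) correctly delivers the first identity \emph{with} the factor $m_{q^2}(I)/\prod_j m_{q^2}(I_j)$, matching the form of the second identity; the ``simpler form'' you are trying to justify is not actually what the argument gives, and the discrepancy appears to lie in the statement rather than in your reasoning.
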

\begin{proof}
Similar to the proof of corollary \ref{LMW-iden1}, we can prove these identities by the quantum Littlewood correspondence II.
\end{proof}

\section{$q$-Goulden-Jackson identities and quantum Littlewood correspondence III}
\subsection{The Bethe subalgebra of $A_q(\Mat_n)$}
Define $\alpha_0=\beta_0=1$, for $k<0$, $\alpha_k=\beta_k=0$ and for $k>0$,
\begin{equation}\label{genefcn1}
\begin{aligned}
  &\alpha_k=tr\Ec^{(1^{k})}X_1\cdots X_k=\sum_{I \subseteq [n] \atop |I|=k} {\det}_q(X_I), \\ &\beta_k=tr\Ec^{(k)}X_1\cdots X_k=\sum_{|I|=k}\frac{\per_q(X_I)}{m_{q^2}(I)},
\end{aligned}
\end{equation}
where  the ordered multisets $I$ with $k$ elements be non-decreasing.
Note that $\alpha_k=0$, if $k>n$. The Bethe subalgebra $\mathfrak{R}_n$ of $A_q(\Mat_n)$ is generated by $\alpha_k,\ 0 \leq k \leq n$.
Here we give a simple proof for the following result first obtained in \cite{DL}.

\begin{theorem}\label{t:commutativity-alpha}
$\{\alpha_k \mid k \in \mathbb{Z}\}$ in $A_q(\Mat_n)$ pairwise commute.
\end{theorem}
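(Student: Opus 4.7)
The plan is to prove $\alpha_k\alpha_l=\alpha_l\alpha_k$ (for $1\le k,l\le n$; the boundary cases give $\alpha_k\in\{0,1\}$ and commutation is trivial) by realizing both products as a single trace on the combined auxiliary space $(\CC^n)^{\ot(k+l)}$ and then interchanging the two antisymmetrizers via a block-swap element of $\Hc_{k+l}$. Because the endomorphism part of $\Ec^{(1^l)}_{[k+1,k+l]}$ acts on tensor factors disjoint from those touched by $X_j$ ($j\le k$) and by $\Ec^{(1^k)}_{[1,k]}$ (and symmetrically), every relevant pair of operators commutes in $A_q(\Mat_n)\ot\mathrm{End}((\CC^n)^{\ot(k+l)})$, so I would first rewrite
\begin{equation*}
\alpha_k\alpha_l=\mathrm{tr}_{1,\dots,k+l}\bigl(\Ec^{(1^k)}_{[1,k]}\,\Ec^{(1^l)}_{[k+1,k+l]}\,X_1X_2\cdots X_{k+l}\bigr),
\end{equation*}
together with the analogous expression for $\alpha_l\alpha_k$ in which the two blocks are interchanged.

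The decisive input from the RTT relation is the auxiliary identity $\check{R}_\si\,X_1X_2\cdots X_{k+l}=X_1X_2\cdots X_{k+l}\,\check{R}_\si$ for every $\si\in\Sym_{k+l}$. On simple generators this is exactly $\check{R}_iX_iX_{i+1}=X_iX_{i+1}\check{R}_i$ together with the trivial commutation $\check{R}_iX_j=X_j\check{R}_i$ for $j\notin\{i,i+1\}$, and the general case follows by iterating along a reduced word. I would then introduce the block-swap $w\in\Sym_{k+l}$ defined by $w(i)=i+l$ for $1\le i\le k$ and $w(i)=i-k$ for $k+1\le i\le k+l$. A direct inversion count yields $\ell(w)=kl$ and the length-additivity $\ell(ws_i)=\ell(w)+1=\ell(s_{i+l}w)$ for $1\le i<k$, with the symmetric statement for $1\le j<l$, which upgrades the symmetric-group conjugation $ws_iw^{-1}=s_{i+l}$ to the Hecke identity $T_wT_iT_w^{-1}=T_{i+l}$ (and analogously $T_wT_{k+j}T_w^{-1}=T_j$).

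Since $\Ec^{(1^k)}_{[1,k]}$ is by construction a polynomial in $T_1,\dots,T_{k-1}$ and $\Ec^{(1^l)}_{[k+1,k+l]}$ in $T_{k+1},\dots,T_{k+l-1}$, applying $\mathrm{Ad}(\check{R}_w)$ termwise yields
\begin{equation*}
\check{R}_w\,\Ec^{(1^k)}_{[1,k]}\,\Ec^{(1^l)}_{[k+1,k+l]}\,\check{R}_w^{-1}=\Ec^{(1^l)}_{[1,l]}\,\Ec^{(1^k)}_{[l+1,l+k]},
\end{equation*}
with $\check{R}_w$ invertible for generic $q$ by the quadratic Hecke relation. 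Combining this conjugation identity with the cyclic invariance of the trace and with the commutation $\check{R}_w^{\pm1}X_1\cdots X_{k+l}=X_1\cdots X_{k+l}\check{R}_w^{\pm1}$ from the previous step immediately delivers $\alpha_k\alpha_l=\alpha_l\alpha_k$.

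The main technical obstacle is verifying the length-additivity $\ell(ws_i)=\ell(w)+\ell(s_i)$ and $\ell(s_{i+l}w)=\ell(s_{i+l})+\ell(w)$, which is what licenses passing from the group-theoretic conjugation $ws_iw^{-1}=s_{i+l}$ to the Hecke-algebra conjugation $T_wT_iT_w^{-1}=T_{i+l}$; once that bookkeeping is settled, the remainder is a formal manipulation of the RTT relation and cyclicity of the trace, and the same scheme will in fact produce the stronger transfer-matrix-type identity $[\alpha_k,\alpha_l]=0$ uniformly in $k,l$.
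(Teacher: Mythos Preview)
Your proof is correct, but it follows a genuinely different route from the paper's. The paper moves the $X$'s past one another: it writes $\alpha_r\alpha_s=\mathrm{tr}\,\Ec^{(1^r)}\otimes\Ec^{(1^s)}X_1\cdots X_{r+s}$, then repeatedly applies the unbraided relation $R_{ij}X_iX_j=X_jX_iR_{ij}$ to reorder the product as $X_{r+1}\cdots X_{r+s}X_1\cdots X_r$ sandwiched between $\prod R_{ij}^{-1}$ and $\prod R_{ij}$, and finally uses the Yang--Baxter identities $\check{R}_{ij}R_{ik}R_{jk}=R_{ik}R_{jk}\check{R}_{ij}$ (and the mirror) to push the $R$-products through each antisymmetrizer and cancel them. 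By contrast, you keep $X_1\cdots X_{k+l}$ fixed and instead swap the two idempotents by conjugating with the Hecke element $\check{R}_w$ for the block permutation $w$, using only the single commutation $\check{R}_\sigma X_1\cdots X_{k+l}=X_1\cdots X_{k+l}\check{R}_\sigma$ together with the length-additivity that upgrades $ws_iw^{-1}=s_{i+l}$ to $T_wT_iT_w^{-1}=T_{i+l}$.

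What each approach buys: the paper's $R$-matrix calculus is more hands-on and makes the Yang--Baxter input completely explicit; your argument is more structural, since it uses nothing beyond the Hecke-module property of $(\CC^n)^{\otimes m}$ and the fact that the antisymmetrizer is a polynomial in the $T_i$'s, and it would work verbatim with any two Hecke idempotents in place of $\Ec^{(1^k)}$ and $\Ec^{(1^l)}$. The one point where you should be explicit is that cyclicity of the trace is being invoked only to cycle the scalar operator $\check{R}_w^{\pm1}\in\mathrm{End}((\CC^n)^{\otimes(k+l)})$, which is legitimate even though the remaining factor takes values in the noncommutative algebra $A_q(\Mat_n)$.
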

\begin{proof}
It is sufficient to prove that for all $r,s \in \mathbb{Z}$,
\beq\label{alpha-rela}
  \alpha_r \alpha_s=\alpha_s \alpha_r.
\eeq
Indeed, by the RTT relation \eqref{RT},
\ben
\bal
\alpha_r \alpha_s &=tr\Ec^{(1^{r})}\ot \Ec^{(1^{s})}X_1\cdots X_{r+s}\\
&=tr\Ec^{(1^{r})}\ot \Ec^{(1^{s})}\left(\prod^{\rightarrow}_{1 \leq i \leq r < j \leq r+s}R_{ij}^{-1} \right) X_{r+1}\cdots \\
& \quad \cdots X_{r+s}X_1\cdots X_{r}\left(\prod^{\leftarrow}_{1 \leq i \leq r < j \leq r+s}R_{ij}\right),
\eal
\een
where the idempotent $\Ec^{(1^{s})}$ is over the copies of $\mathrm{End}(\mathbb{C}^n)$ labeled by $\{r+1,\ldots,r+s\}$ and the ordered product be taken by $(i,j)$ precedes $(i',j')$ if $j<j'$, or if $j=j'$ but $i>i'$.

Hence by the property of trace,
we have that
\ben
\bal
\alpha_r \alpha_s
&=tr\left(\prod^{\leftarrow}_{1 \leq i \leq r < j \leq r+s}R_{ij}\right)\Ec^{(1^{r})}\ot \Ec^{(1^{s})}\left(\prod^{\rightarrow}_{1 \leq i \leq r < j \leq r+s}R_{ij}^{-1} \right) X_{r+1}\cdots \\
& \quad \cdots X_{r+s}X_1\cdots X_{r}.
\eal
\een
The equation \eqref{alpha-rela} can follow from the variant form of Yang-Baxter equation (YBE) \eqref{YBeq}:
\beq\label{var-YBE}
\bal
\check{R}_{ij}R_{ik}R_{jk}=R_{ik}R_{jk}\check{R}_{ij},\\
\check{R}_{jk}R_{ik}R_{ij}=R_{ik}R_{ij}\check{R}_{jk}.
\eal
\eeq
Indeed, since the idempotent can be written as
\[
\Ec^{(1^{r})}\ot \Ec^{(1^{s})}=\frac{1}{c_{(1^r)}c_{(1^s)}}\sum_{1\leq i<j\leq r \atop r+1 \leq k <l \leq r+s }\check{R}_{ij} \check{R}_{kl}.
\]
So by \eqref{var-YBE}, we have that
\ben
\bal
&\left(\prod^{\leftarrow}_{1 \leq i \leq r < j \leq r+s}R_{ij}\right)\Ec^{(1^{r})}\ot \Ec^{(1^{s})}\left(\prod^{\rightarrow}_{1 \leq i \leq r < j \leq r+s}R_{ij}^{-1} \right)\\
&=\Ec^{(1^{r})}\ot \Ec^{(1^{s})}\left(\prod^{\leftarrow}_{1 \leq i \leq r < j \leq r+s}R_{ij}\right)\left(\prod^{\rightarrow}_{1 \leq i \leq r < j \leq r+s}R_{ij}^{-1} \right)\\
&=\Ec^{(1^{r})}\ot \Ec^{(1^{s})}.
\eal
\een
Hence,
\[
\alpha_r \alpha_s =tr\Ec^{(1^{r})}\ot \Ec^{(1^{s})} X_{r+1}\cdots  X_{r+s}X_1\cdots X_{r}=\alpha_s\alpha_r.
\]
\end{proof}

We set
\begin{align}
\lambda(t)=\sum_{k=0}^{\infty}t^k \alpha_k,\qquad
\si(t)=\sum_{k=0}^{\infty}t^k \beta_k.
\end{align}

By the  Littlewood-Richardson rule \cite{Sa}, we have a useful lemma in the following.
\begin{lemma}
For any positive integer $1 \leq z \leq n-1$,
\begin{equation}\label{ind-decom}
  \mathrm{Ind}_{\mathcal{H}_{n-z} \times \mathcal{H}_{z}}^{\mathcal{H}_n}(\chi_q^{(n-z)}\ot \chi_q^{(1^{z})})=\chi_q^{(n-z+1,1^{z-1})}+\chi_q^{(n-z,1^z)}.
\end{equation}
\end{lemma}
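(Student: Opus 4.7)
The plan is to reduce the identity to a Pieri-rule computation via the standard characteristic map. Recall that the direct sum of the Grothendieck groups of finite-dimensional $\mathcal{H}_n$-modules, taken over all $n \geq 0$, carries a graded ring structure whose product is given by the parabolic induction functor $\mathrm{Ind}_{\mathcal{H}_r \times \mathcal{H}_s}^{\mathcal{H}_{r+s}}(- \boxtimes -)$. There is a well-known isomorphism of graded rings between this Grothendieck ring and the ring of symmetric functions $\Lambda$ sending the class of the irreducible module $V^{\lambda}$ (equivalently, the character $\chi_q^{\lambda}$) to the Schur function $\mathbb{S}_{\lambda}$. Under this isomorphism the identity to be proved transforms into
\begin{equation*}
\mathbb{S}_{(n-z)} \cdot \mathbb{S}_{(1^{z})} = \mathbb{S}_{(n-z+1,\, 1^{z-1})} + \mathbb{S}_{(n-z,\, 1^{z})}.
\end{equation*}

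Next, I would invoke the Pieri rule (the special case of the Littlewood--Richardson rule cited just before the lemma), which asserts that for any partition $\mu$,
\begin{equation*}
\mathbb{S}_{\mu} \cdot \mathbb{S}_{(1^{z})} = \sum_{\lambda} \mathbb{S}_{\lambda},
\end{equation*}
where the sum is over all partitions $\lambda \supset \mu$ such that $\lambda/\mu$ is a vertical strip of size $z$. Specializing to $\mu = (n-z)$, a vertical strip of size $z$ added to a single row of length $n-z$ can place at most one new box in the existing row and must otherwise place the remaining boxes one per new row in column one. This leaves exactly two admissible skew shapes: either one appends zero boxes to row one and creates $z$ new singleton rows, producing $\lambda = (n-z,\, 1^{z})$, or one appends one box to row one and creates $z-1$ new singleton rows, producing $\lambda = (n-z+1,\, 1^{z-1})$.

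Combining the two steps gives the stated character identity. The argument involves no genuine obstacle, since both the characteristic-map isomorphism and the Pieri rule are classical; the only subtlety is ensuring that induction on the Hecke algebra side really corresponds to multiplication of Schur functions, which is the content of the standard theory of symmetric functions for type $A$ Hecke algebras (and can alternatively be verified directly from Proposition~\ref{induced-pro} by expanding the induced character on idempotents). I would therefore write the proof as one or two sentences pointing to the characteristic map followed by an explicit enumeration of the two vertical strips above.
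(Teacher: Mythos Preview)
Your proposal is correct and matches the paper's own argument: the paper simply invokes the Littlewood--Richardson rule (citing \cite{Sa}) without further elaboration, and your reduction via the characteristic map to the Pieri rule $\mathbb{S}_{(n-z)}\cdot \mathbb{S}_{(1^z)}=\mathbb{S}_{(n-z+1,1^{z-1})}+\mathbb{S}_{(n-z,1^z)}$ is exactly the special case intended. Your explicit enumeration of the two vertical strips is a welcome addition of detail over the paper's one-line citation.
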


We have the generalized MacMahon Master Theorem \cite{GLZ,JLZ,Mac}:
\begin{theorem}\label{Mac}
$\lambda(-t)\times \si(t) =1.$
\end{theorem}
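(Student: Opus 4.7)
The identity $\lambda(-t)\sigma(t)=1$ is equivalent to
$$
\sum_{k=0}^{n}(-1)^{k}\alpha_{k}\beta_{n-k}=0\qquad(n\ge 1).
$$
My plan is to assemble each product $\alpha_{k}\beta_{n-k}$ into a single trace over $(\mathbb{C}^{n})^{\otimes n}$, then exploit the Pieri-type decomposition \eqref{ind-decom} and telescope. The first step is the observation that, since $\mathcal{E}^{(1^{k})}_{[1,k]}$ and $\mathcal{E}^{(n-k)}_{[k+1,n]}$ act on disjoint tensor factors through the parabolic embedding $\mathcal{H}_{k}\otimes\mathcal{H}_{n-k}\hookrightarrow\mathcal{H}_{n}$, the two separate traces defining $\alpha_{k}$ and $\beta_{n-k}$ merge into
$$
\alpha_{k}\beta_{n-k}=\mathrm{tr}\bigl(\mathcal{E}^{(1^{k})}_{[1,k]}\otimes\mathcal{E}^{(n-k)}_{[k+1,n]}\bigr)X_{1}X_{2}\cdots X_{n}.
$$

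Next, Proposition \ref{induced-pro} lets me average the parabolic idempotent over $S_{n}$ to obtain an induced character, and Lemma \eqref{ind-decom} gives
$$
\sum_{\sigma\in S_{n}}T_{\sigma}\bigl(\mathcal{E}^{(1^{k})}\otimes\mathcal{E}^{(n-k)}\bigr)T_{\sigma^{-1}}=\chi_{q}^{\mathrm{Ind}(V^{(1^{k})}\otimes V^{(n-k)})}=\chi_{q}^{(n-k+1,1^{k-1})}+\chi_{q}^{(n-k,1^{k})}
$$
for $1\le k\le n-1$, with the boundary values $\chi_{q}^{(n)}$ at $k=0$ and $\chi_{q}^{(1^{n})}$ at $k=n$. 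A direct reindexing shows that
$$
\sum_{k=0}^{n}(-1)^{k}\bigl[\chi_{q}^{(n-k+1,1^{k-1})}+\chi_{q}^{(n-k,1^{k})}\bigr]=0
$$
in $\mathcal{H}_{n}$: the contribution $(-1)^{k}\chi_{q}^{(n-k,1^{k})}$ cancels against $(-1)^{k+1}\chi_{q}^{(n-k,1^{k})}$ coming from the first summand at index $k+1$, while the two unpaired boundary terms $\chi_{q}^{(n)}$ and $(-1)^{n}\chi_{q}^{(1^{n})}$ cancel their solitary partners generated by the reindexing.

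Finally I transfer this $\mathcal{H}_{n}$-vanishing to $A_{q}(\Mat_{n})$ via the functional $\mathrm{tr}\,(\,\cdot\,)X_{1}\cdots X_{n}$. The RTT relation iterated gives $T_{\sigma}X_{1}\cdots X_{n}=X_{1}\cdots X_{n}T_{\sigma}$, so the matrix-trace cyclicity reduces $\mathrm{tr}\,T_{\sigma}AT_{\sigma^{-1}}X_{1}\cdots X_{n}$ to a manipulation of $A$ alone, and summing over $\sigma$ produces a uniform normalization constant (independent of $k$) that identifies $\mathrm{tr}\,\chi_{q}^{\mathrm{Ind}(V^{(1^{k})}\otimes V^{(n-k)})}X_{1}\cdots X_{n}$ with $\alpha_{k}\beta_{n-k}$ up to this common scalar. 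Combining with the telescoping yields $\sum_{k}(-1)^{k}\alpha_{k}\beta_{n-k}=0$.

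The main obstacle is precisely the transfer step: in the Hecke algebra the inverse $T_{\sigma}^{-1}$ differs from the reverse-word $T_{\sigma^{-1}}$, so the naive trace-conjugation identity fails term by term. The cleanest reconciliation uses the Hermitian adjoint structure on $(\mathbb{C}^{n})^{\otimes n}$ established in Lemma \ref{inva lemma}, which identifies $T_{\sigma^{-1}}$ with the adjoint of $T_{\sigma}$ in the Young orthogonal basis; after rearranging the sum over $\sigma$, only the $k$-independent normalization survives. An alternative route, once the commutativity of all $\beta_{j}$'s with the $\alpha_{k}$'s is verified by the same $R$-matrix manipulation as in Theorem \ref{t:commutativity-alpha} (replacing the antisymmetrizer by the symmetrizer throughout the variant YBE \eqref{var-YBE}), is to invoke the isomorphism of the Bethe subalgebra with the ring of classical symmetric functions; under this isomorphism $\alpha_{k}\leftrightarrow e_{k}$ and $\beta_{k}\leftrightarrow h_{k}$, and $\lambda(-t)\sigma(t)=1$ becomes the classical Newton relation $E(-t)H(t)=1$.
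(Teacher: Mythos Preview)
Your overall strategy coincides with the paper's: write each $\alpha_k\beta_{n-k}$ as a single trace $\mathrm{tr}\bigl((\mathcal{E}^{(1^k)}\otimes\mathcal{E}^{(n-k)})X_1\cdots X_n\bigr)$, apply the Pieri decomposition \eqref{ind-decom} of the induced character, and telescope. The difficulty is exactly where you locate it, in the transfer step, but your resolution is not quite right. The claim that summing over $\sigma$ yields a ``uniform normalization constant (independent of $k$)'' relating $\mathrm{tr}\,\chi_q^{\mathrm{Ind}}X_1\cdots X_n$ to $\alpha_k\beta_{n-k}$ is false: no single scalar does this, since on the block $V^\lambda$ the weight is $c_\lambda f^\lambda$, which varies with the hook $\lambda$. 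What is true, and suffices, is that $A\mapsto\mathrm{tr}(AX_1\cdots X_n)$ factors through the map $A\mapsto\sum_\sigma T_\sigma A T_{\sigma^{-1}}$ as a \emph{linear functional}, not a scalar multiple. The paper achieves this concretely via Proposition~\ref{imm-char}: it rewrites
\[
\mathrm{tr}(AX_1\cdots X_k)=\sum_I\frac{1}{m_{q^2}(I)}\bigl\langle I\bigm|\Bigl(\textstyle\sum_\sigma \check R_\sigma A\check R_{\sigma^{-1}}\Bigr)X_1\cdots X_k\bigm|I\bigr\rangle,
\]
so that the telescoping of the induced characters in $\mathcal H_k$ immediately gives the vanishing of the alternating sum. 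Your Fix~1 via the Hermitian adjoint does not, as stated, produce this; the adjoint identity $\check R_\sigma^*=\check R_{\sigma^{-1}}$ is an ingredient in Proposition~\ref{imm-char} but by itself does not collapse the conjugation.

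Your Fix~2 (pass to symmetric functions via $\Phi$) is appealing but has a circularity problem: one checks directly that $\Phi(\alpha_k)=e_k$ and, by a character computation, $\Phi(\beta_k)=h_k$, but to conclude $\lambda(-t)\sigma(t)=1$ in $A_q(\Mat_n)$ from $E(-t)H(t)=1$ in $\mathrm{Sym}^{(n)}$ you need $\Phi$ to be injective on the subalgebra generated by \emph{both} the $\alpha_k$'s and the $\beta_k$'s. Injectivity on $\mathbb{C}[\alpha_1,\ldots,\alpha_n]$ is clear, but knowing $\beta_k$ lies there is essentially the MacMahon identity you are trying to prove.
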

\begin{proof}
It is sufficient to show that
\begin{equation}\label{sum-term}
\sum_{r=0}^k (-1)^r tr\Ec^{(r)}\ot \Ec^{(1^{k-r})}X_1\cdots X_k=0,
\end{equation}
where the idempotent $\Ec^{(1^{k-r})}$ is over the copies of $\mathrm{End}(\mathbb{C}^n)$ labeled by $\{r+1,\ldots,k\}$.
By the proof of Proposition \ref{imm-char}, we have that
\ben
\bal
&tr\Ec^{(r)}\ot \Ec^{(1^{k-r})}X_1\cdots X_k\\
=&\sum_{I}\frac{1}{m(I)}\sum_{\si \in \mathfrak{S}_k}\langle i_{\sigma_1},\dots ,i_{\sigma_k}\mid \Ec^{(r)}\ot \Ec^{(1^{k-r})}X_1\cdots X_k \mid i_{\sigma_1},\dots ,i_{\sigma_k}\rangle\\
=&\sum_{I}\frac{1}{m_{q^2}(I)}\sum_{\si \in \mathfrak{S}_k}\langle i_1,\dots ,i_k\mid \check{R}_{\si}\Ec^{(r)}\ot \Ec^{(1^{k-r})}\check{R}_{\si^{-1}}X_1\cdots X_k \mid i_1,\dots ,i_k\rangle.
\eal
\een
Since proposition \ref{induced-pro} and equation \eqref{ind-decom}, for $1 \leq r \leq k-1$,
\[
\check{R}_{\si}\mathcal{E}^{(r)}\mathcal{E}^{(1^{\{r+1,\ldots,k\}})}\check{R}_{\si^{-1}}=\chi_q^{(r+1,1^{k-r-1})}+\chi_q^{(r,1^{k-r})}.
\]
Therefore the telescoping sum \eqref{sum-term} equals zero.
\end{proof}
In particular, all the elements $\alpha_k, \beta_k$ commute with each other.
Consider the $q$-permutation operator $P^q \in \mathrm{End}(\mathbb{C}^n\ot \mathbb{C}^n)$ defined by
\[
P^q=\sum_{i}e_{ii}\ot e_{ii}+q\sum_{i>j}e_{ij}\ot e_{ji}+q^{-1}\sum_{i<j}e_{ij}\ot e_{ji}.
\]

Let $Y, Z$ be $n\times n$ matrices over  $A_{q}(\Mat_{n})$. Denote $Y*Z=tr_1 P^q Y_1Z_2$.
Let $X^{[k]}$ be the $k$th power of $X$ under the multiplication $*$, i.e.
\begin{align}\label{e:power-m}
X^{[0]}=1,\   X^{[1]}=X, \ X^{[k]}=X^{[k-1]}*X,\ k>1.
\end{align}
We denote
\begin{align}
\psi(t)=\sum_{k=0}^{\infty}t^k\gamma_{k+1},
\end{align}
where $\gamma_k=trX^{[k]}$ can be viewed as $q$-analog of power sums polynomials.

The following Newton identities follow from the quantum
MacMahon Master Theorem \cite{JLZ}.
\begin{theorem}[Newton's identities]\label{Newton-iden}
Let $X$ be the generator matrix of $A_q(\Mat(n))$.  Then
\begin{align}
&\partial_t \lambda(-t)=-\lambda(-t) \psi(t),\\
&\partial_t \si(t)= \psi(t)\si(t).
 \end{align}
\end{theorem}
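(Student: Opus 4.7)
The plan is first to observe that the two Newton identities are essentially equivalent given the quantum MacMahon Master Theorem (Theorem \ref{Mac}). Since $\lambda(-t)$ has constant term $1$, it is invertible as a formal power series in $t$ with coefficients in the Bethe subalgebra $\mathfrak{R}_n$, which is commutative by Theorem \ref{t:commutativity-alpha}; by an analogous RTT argument one also verifies that each $\gamma_k$ commutes with all $\alpha_j$ and $\beta_j$. Differentiating the identity $\lambda(-t)\si(t)=1$ yields $(\partial_t\lambda(-t))\si(t) = -\lambda(-t)(\partial_t\si(t))$, so the two assertions become equivalent after multiplying through on one side by $\si(t)$ or $\lambda(-t)$ (using the commutativity of all coefficients involved). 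Thus it suffices to prove the first identity $\partial_t\lambda(-t) = -\lambda(-t)\psi(t)$.

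Extracting the coefficient of $t^{k-1}$, this identity reduces to the recursion
\[
k\,\alpha_k \;=\; \sum_{j=1}^{k}(-1)^{j-1}\alpha_{k-j}\,\gamma_{j},\qquad k\ge 1.
\]
I would prove this by rewriting both sides as traces of operators on $(\CC^{n})^{\ot k}$. Using $\alpha_{k-j} = \mathrm{tr}\,\Ec^{(1^{k-j})}X_1\cdots X_{k-j}$ and the $*$-product realization $\gamma_{j} = \mathrm{tr}_{1\ldots j}\,P^{q}_{1\ldots j}\,X_1\cdots X_j$, where $P^{q}_{1\ldots j}$ denotes the $q$-cyclic operator on $j$ consecutive tensor slots, the right-hand side can be reorganised via the RTT relation and the Yang--Baxter equation \eqref{var-YBE} into a single trace on $(\CC^{n})^{\ot k}$ of the form $\mathrm{tr}\,\mathcal{Y}\,X_1\cdots X_k$, where $\mathcal{Y}\in \Hc_k$ is an alternating sum $\sum_{j=1}^{k}(-1)^{j-1}\Ec^{(1^{k-j})}\ot\check{R}_{[j]}$, the $\check{R}_{[j]}$ implementing the $q$-cycle on the last $j$ strands.

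The main obstacle is therefore a Hecke-algebra identity $k\,\Ec^{(1^k)} = \mathcal{Y}$, a $q$-deformation of the classical expansion of the antisymmetriser as an alternating sum of partial antisymmetrisers times cycles. I would establish it by induction on $k$, using the fusion procedure \eqref{fusion-procedure} specialised to the column shape $(1^k)$ together with the standard column-antisymmetriser recurrence
\[
[k]_q\,\Ec^{(1^k)} \;=\; \Ec^{(1^{k-1})}\bigl(1-q^{-1}T_{k-1}\bigr)\bigl(1-q^{-2}T_{k-2}T_{k-1}\bigr)\cdots\bigl(1-q^{-(k-1)}T_1\cdots T_{k-1}\bigr),
\]
expanded and re-collected according to the position of the first constant ``$1$'' factor in the product. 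Substitution into the trace formula then produces, slot by slot, the products $\alpha_{k-j}\gamma_j$ with the correct alternating signs, yielding the claimed recursion. This proves the first Newton identity; the second then follows immediately from the MacMahon Master Theorem as above.
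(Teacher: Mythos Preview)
The paper does not actually give a proof of Theorem~\ref{Newton-iden}: the sentence introducing it reads ``The following Newton identities follow from the quantum MacMahon Master Theorem \cite{JLZ}'', so the result is simply imported from the authors' earlier paper and no argument appears in the present text. There is therefore nothing in the paper to compare your proposal against beyond that citation.

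Your reduction of the two identities to one another via $\lambda(-t)\si(t)=1$ is correct (and in fact does not need the separate commutativity of the $\gamma_k$ with the $\alpha_j$: differentiating $\lambda(-t)\si(t)=1$ and left-cancelling the invertible series $\lambda(-t)$ already does the job). The extraction of the coefficient recursion $k\,\alpha_k=\sum_{j=1}^{k}(-1)^{j-1}\alpha_{k-j}\gamma_j$ is also correct.

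The genuine gap is your claimed Hecke-algebra identity $k\,\Ec^{(1^k)}=\mathcal{Y}$ with $\mathcal{Y}=\sum_{j=1}^{k}(-1)^{j-1}\Ec^{(1^{k-j})}\ot\check{R}_{[j]}$. This is \emph{false} as an identity in $\Hc_k$, already at $q=1$: for $k=3$ the difference $3\Ec^{(1^3)}-\mathcal{Y}$ is a nonzero element of $\CC[\Sym_3]$ (it has, for instance, a nonzero coefficient on the transposition $(13)$). What \emph{is} true is the equality of traces $\mathrm{tr}\bigl(k\,\Ec^{(1^k)}X_1\cdots X_k\bigr)=\mathrm{tr}\bigl(\mathcal{Y}\,X_1\cdots X_k\bigr)$, because in the classical case the trace only sees the cycle type, and in the quantum case the analogous collapse is engineered by the $q$-permutation $P^q$ in the definition of the $*$-product and of $\gamma_j$. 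Consequently the argument cannot be finished by a purely Hecke-algebraic induction as you outline; one must work with partial traces on $(\CC^n)^{\ot k}$ and use the RTT relation together with the specific form of $P^q$. This is precisely how the proof in \cite{JLZ} proceeds. A symptom of the same issue is that the antisymmetriser recurrence you invoke carries $[k]_q$ on the left, whereas the target recursion has the classical integer $k$; the conversion from $[k]_q$ to $k$ happens only after inserting $P^q$ and tracing, not inside $\Hc_k$.
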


The $q$-characteristic polynomial of generator matrix $X$ of $A_q(\Mat(n))$ is defined as
\begin{equation}
  \text{char}_q(X,t)=\sum_{k=0}^{n}(-1)^k \alpha_{k}t^{n-k}.
\end{equation}
We have the quantum Cayley-Hamilton Theorem in \cite{ZJJ,JLZ}.
\begin{theorem}
Let $X$ be the generator matrix of $A_q(\Mat(n))$. Then
\begin{equation}
\sum_{k=0}^n(-1)^k \alpha_k X^{[n-k]}=0.
\end{equation}
\end{theorem}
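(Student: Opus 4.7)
The plan is to establish the quantum Cayley--Hamilton identity via a matrix-valued generating function argument that mirrors the classical proof through the resolvent $(I-tX)^{-1}$. Introduce the formal matrix-valued series $L(t):=\sum_{k=0}^{\infty}t^{k}X^{[k]}$; then the coefficient of $t^n$ in $\lambda(-t)\cdot L(t)$ is precisely $\sum_{k=0}^{n}(-1)^k\alpha_k X^{[n-k]}$, so the theorem is equivalent to showing that $\lambda(-t)L(t)$ is a matrix-valued polynomial in $t$ of degree at most $n-1$. By the quantum MacMahon Master Theorem (Theorem \ref{Mac}) this in turn amounts to exhibiting a ``quantum adjugate'' $Q(t)$ of degree $\le n-1$ with $L(t)=\sigma(t)Q(t)$.

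The next step would be to lift Newton's identity to the matrix level. Writing $L(t)=I+t\Psi(t)$ with $\Psi(t):=\sum_{k\ge 0}t^k X^{[k+1]}$ (so that $\mathrm{tr}\,\Psi(t)=\psi(t)$), Theorem \ref{Newton-iden} gives the trace-level equality $\lambda(-t)\psi(t)=-\partial_t\lambda(-t)$, a polynomial of degree $\le n-1$. One then seeks a matrix analog $\lambda(-t)\Psi(t)=M(t)$ with $\deg M(t)\le n-1$, together with a matching of leading coefficients that cancels the $t^n$ contribution coming from $\lambda(-t)$ in $\lambda(-t)L(t)=\lambda(-t)+t\lambda(-t)\Psi(t)$. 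The commutativity relations $\alpha_r\alpha_s=\alpha_s\alpha_r$ of Theorem \ref{t:commutativity-alpha}, together with the commutativity of each $\alpha_k$ with the $X^{[j]}$'s (which itself requires verification via the RTT relations and the definition of $*$), make both sides of this matrix Newton identity well-posed.

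The cleanest route to the matrix Newton identity is through the vanishing of the quantum antisymmetrizer $A_{n+1}=\Ec^{(1^{n+1})}$ on $(\mathbb{C}^n)^{\otimes(n+1)}$. Using the fusion procedure recalled in Section \ref{S:HeckeChar}, one decomposes $A_{n+1}$ as $A_n^{(1,\dots,n)}$ multiplied by a Hecke-algebra element assembled from the transpositions involving position $n+1$, and applies the identity to $X_1X_2\cdots X_{n+1}$. Invoking the quantum determinant identity $A_n^{(1,\dots,n)}X_1\cdots X_n=\det_q(X)\,A_n^{(1,\dots,n)}$ and then taking a suitable partial trace (with an auxiliary spectral parameter) over the positions $1,\dots,n$ produces a polynomial identity which, after collecting powers of $t$, assembles exactly into the claim that $\lambda(-t)L(t)$ has degree $\le n-1$. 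Extracting the coefficient of $t^n$ completes the proof.

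The main obstacle will be the faithful $R$-matrix bookkeeping in this last step: the $*$-product is defined via the $q$-permutation operator $P^q$, which is distinct from the Hecke $R$-matrix $\check R$ underlying the antisymmetrizer, so one must carefully bridge the two and track how auxiliary tensor factors interact with $*$. If the fusion argument proves unwieldy, an alternative would be a direct induction on $n$ combining the commutativity of the $\alpha_k$ (Theorem \ref{t:commutativity-alpha}) with Newton's identities to propagate the polynomial degree bound from trace to matrix level, but this seems less conceptually natural than the antisymmetrizer approach.
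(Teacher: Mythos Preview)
The paper does not give its own proof of this theorem: it is simply quoted from \cite{ZJJ,JLZ}. So there is nothing to compare your argument against in the paper itself; the relevant benchmark is the cited literature, where the result is established by an $R$-matrix/antisymmetrizer computation in the spirit you outline.

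Your overall strategy---reduce to showing $\lambda(-t)L(t)$ is a polynomial of degree $\le n-1$, and obtain this from the vanishing of $\Ec^{(1^{n+1})}$ on $(\mathbb{C}^n)^{\otimes(n+1)}$ via a partial trace---is the standard one and is sound in outline. But as written it is a plan, not a proof, and the gap you yourself flag is exactly where all the content lies. The $*$-product is built from $P^q$, whereas the antisymmetrizer is built from $\check R$; these are genuinely different operators, and the passage from ``$A_{n+1}X_1\cdots X_{n+1}=0$'' to an identity involving $X^{[k]}$ requires an explicit computation relating $P^q$ to the Hecke generators (in \cite{JLZ} this is the heart of the argument). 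Until that computation is carried out, nothing has been proved.

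There is also a second subtlety you gloss over. You write that the commutativity of each $\alpha_k$ with the $X^{[j]}$ ``requires verification''; in fact it is generally \emph{false} that $\alpha_k$ is central in $A_q(\Mat_n)$ for $k<n$ (only $\alpha_n=\det_q$ is central). So one cannot freely move $\alpha_k$ past matrix entries, and in particular the step ``propagate the degree bound from the $t^n$ coefficient to all higher coefficients by applying $*X$'' does not follow from a naive commutativity argument. The correct statement $\sum_k(-1)^k\alpha_k X^{[n-k]}=0$ has a fixed ordering (scalars on the left), and the antisymmetrizer computation produces exactly this ordering; your generating-function reformulation must respect it throughout. In short: the architecture is right, but the two load-bearing steps (the $P^q$/$\check R$ bridge, and the ordering of $\alpha_k$ against matrix entries) are not filled in, and both are nontrivial.
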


\subsection{$q$-Goulden-Jackson identities and Quantum Littlewood correspondence III }

Fix a partition $\lambda \vdash r$. On the Bethe algebra $\mathfrak{R}_n$ generated by $\alpha_i$ or $\beta_i$ \eqref{genefcn1}, we denote
\[
A=(\alpha_{\lambda_i^{T}-i+j})_{\lambda_1\times \lambda_1},\ \ \ \ \  B=(\beta_{\lambda_i-i+j})_{\lambda_1^T\times \lambda_1^T}.
\]
Then we have the following two special elements in $\mathfrak{R}_n$:
\begin{align*}
  &\det(A)=\sum_{\mu}K_{\lambda^{T},\mu}^{-1}\alpha_{\mu_1}\cdots \alpha_{\mu_{\lambda_1}}, \\
  &\det(B)=\sum_{\mu}K_{\lambda,\mu}^{-1}\beta_{\mu_1}\cdots \beta_{\mu_{\lambda_1}}.
\end{align*}

The following is the quantization of the  Goulden-Jackson identities that appeared as Theorem 2.1 in \cite{GJ} and
Theorem 3.2 in \cite{KS}.  It can be viewed as the generalization of the Jacobi-Trudi identity of Schur polynomials.
\begin{theorem}\label{quantum-JT} We have that
\begin{equation}
\det(A)=\det(B)=tr(\mathcal{E}_{\Tc}^{\lambda}X_1\cdots X_r) =\sum_{I}\frac{\Imm_{\chi_q^{\lambda}}(X_I)}{m_{q^2}(I)},
\end{equation}
where the sum is over all non-decreasing ordered multisets $I$ of $[n]$ satisfying $|I|=r$.
\end{theorem}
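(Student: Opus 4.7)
The plan is to split the chain of equalities into three pieces and handle each in turn. First I would derive the rightmost identity $tr(\mathcal{E}_{\Tc}^{\lambda}X_1\cdots X_r)=\sum_I\Imm_{\chi_q^{\lambda}}(X_I)/m_{q^2}(I)$ directly from Proposition \ref{imm-char}. Then I would derive $\det(A)=\det(B)=tr(\mathcal{E}_{\Tc}^{\lambda}X_1\cdots X_r)$ by transporting the two classical Jacobi-Trudi identities for $s_\lambda$ to the Grothendieck ring of $\mathcal{H}_r$-modules and then applying the trace functional $tr(\cdot\,X_1\cdots X_r)$.

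For the rightmost equality, sum the formula of Proposition \ref{imm-char} over all non-decreasing $r$-multisets $I=(i_1\leq\cdots\leq i_r)$ of $[n]$. Any tuple $(j_1,\ldots,j_r)\in[n]^r$ with sorted multiset $I$ appears as $(i_{\sigma_1},\ldots,i_{\sigma_r})$ for exactly $m(I)$ permutations $\sigma\in\mathfrak{S}_r$, so the $1/m(I)$ normalization exactly collapses the double sum to $\sum_{(j_1,\ldots,j_r)}\langle j_1,\ldots,j_r\mid \mathcal{E}_{\Tc}^{\lambda}X_1\cdots X_r\mid j_1,\ldots,j_r\rangle=tr(\mathcal{E}_{\Tc}^{\lambda}X_1\cdots X_r)$.

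For the two determinantal expressions, use classical Jacobi-Trudi $s_\lambda=\det(e_{\lambda_i^T-i+j})=\det(h_{\lambda_i-i+j})$ transported to the Grothendieck ring of $\mathcal{H}_r$-modules under the correspondence $\chi_q^{\lambda}\leftrightarrow s_\lambda$, $\chi_q^{(1^k)}\leftrightarrow e_k$, $\chi_q^{(k)}\leftrightarrow h_k$, where the ring product is realized by induction. This yields virtual-character identities
\ben
\chi_q^{\lambda}=\sum_{\sigma\in\mathfrak{S}_{\lambda_1}}\mathrm{sgn}(\sigma)\,\psi_q^{\mu^{\sigma}}=\sum_{\sigma\in\mathfrak{S}_{\lambda_1^T}}\mathrm{sgn}(\sigma)\,\phi_q^{\nu^{\sigma}},
\een
where $\mu^{\sigma}$ has parts $\lambda_i^T-i+\sigma(i)$ and $\nu^{\sigma}$ has parts $\lambda_j-j+\sigma(j)$. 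Now apply $tr(\cdot\,X_1\cdots X_r)$ to both sides. On the left, Proposition \ref{ch formula} writes $\chi_q^{\lambda}=\sum_{\sigma\in\mathfrak{S}_r}T_\sigma\mathcal{E}_{\Tc}^{\lambda}T_{\sigma^{-1}}$, and the RTT consequence $\check{R}_k X_k X_{k+1}=X_k X_{k+1}\check{R}_k$ lets each $T_\sigma$ commute past $X_1\cdots X_r$; cyclicity of trace then makes all $r!$ summands equal, giving $tr(\chi_q^{\lambda}X_1\cdots X_r)=r!\,tr(\mathcal{E}_{\Tc}^{\lambda}X_1\cdots X_r)$. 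On the right, Proposition \ref{induced-pro} writes $\psi_q^{\mu}=\sum_\sigma T_\sigma(\mathcal{E}^{(1^{\mu_1})}\otimes\cdots\otimes\mathcal{E}^{(1^{\mu_l})})T_{\sigma^{-1}}$, and the same RTT/cyclicity argument reduces this to $r!\,tr((\mathcal{E}^{(1^{\mu_1})}\otimes\cdots\otimes\mathcal{E}^{(1^{\mu_l})})X_1\cdots X_r)$; the $R$-matrix/YBE manipulation from the proof of Theorem \ref{t:commutativity-alpha} identifies the latter with the commutative product $\alpha_{\mu_1}\cdots\alpha_{\mu_l}$. The analogous computation with $\phi_q^{\mu}$ yields $\beta_{\mu_1}\cdots\beta_{\mu_l}$. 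Summing with signs over $\sigma$ gives $r!\,tr(\mathcal{E}_{\Tc}^{\lambda}X_1\cdots X_r)=r!\,\det(A)=r!\,\det(B)$, and dividing by $r!$ completes the proof.

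The main technical obstacle is the faithful transfer of classical Jacobi-Trudi to a virtual-character identity inside the Hecke Grothendieck ring at generic $q$: one must verify that the induced characters of Proposition \ref{induced-pro} realize the classical products $e_{\mu_1}\cdots e_{\mu_l}$ and $h_{\mu_1}\cdots h_{\mu_l}$ under the Tits deformation isomorphism. Once this character-level identity is pinned down, the remainder is a routine combination of Propositions \ref{ch formula} and \ref{induced-pro} with the YBE manipulation already established in Theorem \ref{t:commutativity-alpha}.
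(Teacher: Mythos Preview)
Your argument for the rightmost equality $tr(\mathcal{E}_{\Tc}^{\lambda}X_1\cdots X_r)=\sum_I\Imm_{\chi_q^{\lambda}}(X_I)/m_{q^2}(I)$ is correct and is exactly the paper's computation, read in reverse.

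The genuine gap is in your reduction of $\det(A)$ and $\det(B)$ to $tr(\mathcal{E}_{\Tc}^{\lambda}X_1\cdots X_r)$. The step ``cyclicity of trace then makes all $r!$ summands equal'' fails for generic $q$. After commuting $\check{R}_{\sigma^{-1}}$ past $X_1\cdots X_r$ via RTT and cycling, one gets $tr(\check{R}_{\sigma^{-1}}\check{R}_{\sigma}\,\mathcal{E}_{\Tc}^{\lambda}X_1\cdots X_r)$, and in $\mathcal{H}_r$ one has $T_{\sigma^{-1}}T_{\sigma}\neq 1$ (already $T_1^2=1+(q-q^{-1})T_1$). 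For $r=2$ one finds $\sum_{\sigma}T_{\sigma^{-1}}T_{\sigma}=2+(q-q^{-1})T_1$, so that $tr(\chi_q^{(2)}X_1X_2)=(1+q^2)\,tr(\mathcal{E}^{(2)}X_1X_2)$ rather than $2\,tr(\mathcal{E}^{(2)}X_1X_2)$, and likewise $tr(\psi_q^{(1,1)}X_1X_2)=(1+q^2)\beta_2+(1+q^{-2})\alpha_2\neq 2\alpha_1^2$. The spurious factors on the two sides do \emph{not} cancel. Your argument is only valid at $q=1$, where $T_{\sigma^{-1}}=T_{\sigma}^{-1}$.

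The paper sidesteps this entirely. It never applies cyclicity; instead it expands $\det(A)=\sum_\mu K^{-1}_{\lambda^T,\mu}\,tr\bigl((\mathcal{E}^{(1^{\mu_1})}\otimes\cdots)X_1\cdots X_r\bigr)$ as a sum over all sequences $J\in[n]^r$ and regroups by the sorted multiset $J'$, exactly as in the proof of Proposition~\ref{imm-char}. That regrouping produces the weight $1/m_{q^2}(J')$ \emph{and} converts $\mathcal{E}^{(1^{\mu_1})}\otimes\cdots$ into $\sum_{\sigma}\check{R}_{\sigma}(\mathcal{E}^{(1^{\mu_1})}\otimes\cdots)\check{R}_{\sigma^{-1}}=\psi_q^{\mu}$ inside the matrix element. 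Only then is the character identity $\sum_\mu K^{-1}_{\lambda^T,\mu}\psi_q^{\mu}=\chi_q^{\lambda}$ applied, yielding $\sum_{J'}\Imm_{\chi_q^{\lambda}}(X_{J'})/m_{q^2}(J')$ directly. In short, the linear functional that correctly intertwines with the character identity is not the bare trace $tr(\,\cdot\,X_1\cdots X_r)$ but the weighted diagonal sum $\sum_{J'}m_{q^2}(J')^{-1}\langle J'|\,\cdot\,X_1\cdots X_r|J'\rangle$; Proposition~\ref{imm-char} is precisely the statement that these two functionals agree on idempotents, with the conjugation sum absorbed into the character.
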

\begin{proof}
  By definition,
  \begin{equation*}
  \begin{aligned}
     &\det(A)\\
     &=\sum_{\mu}K_{\lambda^{T},\mu}^{-1}tr\mathcal{E}^{(1^{\mu_1})}\cdots \mathcal{E}^{(1^{\mu_{\lambda_1}})}X_1\cdots X_r\\
     &=\sum_{\mu}\sum_{J}\langle j_1,\dots ,j_r \mid K_{\lambda^{T},\mu}^{-1}\mathcal{E}^{(1^{\mu_1})}\cdots \mathcal{E}^{(1^{\mu_{\lambda_1}})}X_1\cdots X_r\mid  j_1,\dots ,j_r  \rangle,
  \end{aligned}
  \end{equation*}
and
\ben
\bal
&tr(\mathcal{E}_{\Tc}^{\lambda}X_1\cdots X_r)\\
 &= \sum_{J} \langle  j_1,\dots ,j_r  \mid \mathcal{E}_{\Tc}^{\lambda}X_1\cdots X_r \mid  j_1,\dots ,j_r  \rangle\\
&= \sum_{I}\sum_{\si \in \Sym_r} \frac{1}{m(I)}\langle  i_{\si_1},\dots ,i_{\si_r} \mid \mathcal{E}_{\Tc}^{\lambda}X_1\cdots X_r \mid  i_{\si_1},\dots ,i_{\si_r} \rangle\\
&=\sum_{I}\frac{\Imm_{\chi_q^{\lambda}}(X_I)}{m_{q^2}(I)},
\eal
\een
  where the sum in the first line is over all sequences $J$ with $r$ elements and the sum in the second line runs over all non-decreasing multisets $I$ of $[n]$ satisfying $|I|=r$.

  By the  Proposition  \ref{X_I}, we have that
   \begin{equation*}
  \begin{aligned}
     \det(A)&=\sum_{\mu}\sum_{J}\langle  j_1,\dots ,j_r  \mid K_{\lambda^{T},\mu}^{-1}\mathcal{E}^{(1^{\mu_1})}\cdots \mathcal{E}^{(1^{\mu_{\lambda_1}})}X_1\cdots X_r\mid  j_1,\dots ,j_r  \rangle\\
     &=\sum_{\mu}\sum_{J'}\frac{1}{m_{q^2}(J')}\langle  j'_1,\dots ,j'_r  \mid K_{\lambda^{T},\mu}^{-1} \sum_{\si\in \mathfrak{S}_r}\check{R}_{\si}\mathcal{E}^{(1^{\mu_1})}\cdots \\
     &\quad \cdots \mathcal{E}^{(1^{\mu_{\lambda_1}})}\check{R}_{\si^{-1}} X_1\cdots X_r\mid j'_1,\dots ,j'_r \rangle\\
     &=\sum_{\mu}\sum_{J'}\frac{1}{m_{q^2}(J')}\langle j'_1,\dots ,j'_r \mid K_{\lambda^{T},\mu}^{-1} \psi_q^{\mu} X_1\cdots X_r\mid  j'_1,\dots ,j'_r \rangle\\
     &=\sum_{J'}\frac{1}{m_{q^2}(J')}\langle  j'_1,\dots ,j'_r \mid\chi_q^{\lambda} X_1\cdots X_r\mid  j'_1,\dots ,j'_r\rangle\\
     &=\sum_{J'}\frac{\Imm_{\chi_q^{\lambda}}(X_{J'})}{m_{q^2}(J')},
  \end{aligned}
  \end{equation*}
  where the second sum of the second line runs over all ordered multisets $J'=(j'_1\leq \dots \leq j'_r)$ of $[n]$, and the fourth line is because
   $\sum_{\mu}K_{\lambda^{T},\mu}^{-1}\psi_q^{\mu}=\chi_q^{\lambda}$. We can follow $\det(B)=tr\mathcal{E}^{\lambda} X_1\cdots X_r$ by the same method.
\end{proof}


The quantum immanant can be also expressed as a Schur polynomial at particular arguments. More accurately, we have the quantum analog of Littlewood correspondence III \cite{Li}[p. 121].
\begin{theorem}\label{t:Litt3}
   Let $\lambda \vdash r\leq n$ and $\omega_1,\dots,\omega_n$ be the solutions of $\text{char}_q(X,t)=0$ over the algebraic closure field of fraction $\mathbb{C}(q, \alpha_1,\ldots,\alpha_n)$, then
\begin{equation}\label{Littlewood3}
\mathbb{S}_{\lambda}(\omega_1,\dots,\omega_n)=\sum_{I}\frac{\Imm_{\chi_q^{\lambda}}(X_I)}{m_{q^2}(I)},
\end{equation}
where the sum is over all non-decreasing ordered multisets $I$ of $[n]$  satisfying $|I|=r$.
In particular,
\beq
\bal
e_k(\omega_1,\dots,\omega_n)&=\sum_{I}{\det}_q(X_{I}),\\
h_k(\omega_1,\dots,\omega_n)&=\sum_{I}\frac{\per(X_I)}{m_{q^2}(I)},\\
p_k(\omega_1,\dots,\omega_n)&=trX^{[k]}.
\eal
\eeq
\end{theorem}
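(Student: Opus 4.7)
The plan is to reduce the statement to the classical Jacobi--Trudi identity by identifying the Bethe subalgebra $\mathfrak{R}_n$ with the ring of symmetric polynomials in the roots $\omega_1, \dots, \omega_n$ of $\operatorname{char}_q(X,t)$, and then invoking the $q$-Goulden--Jackson identity of Theorem \ref{quantum-JT}.

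First, I would establish the identifications $\alpha_k = e_k(\omega_1,\dots,\omega_n)$, $\beta_k = h_k(\omega_1,\dots,\omega_n)$ and $\gamma_k = p_k(\omega_1,\dots,\omega_n)$ over the field $\mathbb{C}(q,\alpha_1,\dots,\alpha_n)$. The first is immediate from Vieta: since $\operatorname{char}_q(X,t) = \sum_{k=0}^n (-1)^k \alpha_k t^{n-k} = \prod_{i=1}^n (t - \omega_i)$, we get $\alpha_k = e_k(\omega)$. The second follows by comparing generating series: MacMahon's Master Theorem (Theorem \ref{Mac}) gives $\lambda(-t)\sigma(t) = 1$, so $\sigma(t) = \prod_{i=1}^n (1-\omega_i t)^{-1}$, which is exactly the generating function of $h_k(\omega)$. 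The third follows from Newton's identities (Theorem \ref{Newton-iden}): the same differential relation $\partial_t \sigma(t) = \psi(t)\sigma(t)$ holds for $h_k$ and $p_k$ in commutative variables, and crucially $\{\alpha_k,\beta_k,\gamma_k\}$ pairwise commute by Theorem \ref{t:commutativity-alpha} and its analogues, so the classical combinatorial identities transfer verbatim to $\mathfrak{R}_n$.

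Next, because $\mathfrak{R}_n$ is commutative and generated by the $\alpha_k$ (equivalently the $\beta_k$), there is a well-defined algebra homomorphism from the ring of symmetric functions $\Lambda$ to $\mathfrak{R}_n$ sending $e_k \mapsto \alpha_k$, which after base change to $\mathbb{C}(q,\alpha_1,\dots,\alpha_n)$ factors through the specialization $e_k \mapsto e_k(\omega)$. Applying this homomorphism to the classical Jacobi--Trudi identity
\begin{equation*}
\mathbb{S}_\lambda = \det\bigl(e_{\lambda^T_i - i + j}\bigr)_{1 \leq i,j \leq \lambda_1}
\end{equation*}
yields
\begin{equation*}
\mathbb{S}_\lambda(\omega_1,\dots,\omega_n) = \det\bigl(\alpha_{\lambda^T_i - i + j}\bigr) = \det(A).
\end{equation*}

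Finally, by Theorem \ref{quantum-JT} we have
\begin{equation*}
\det(A) = \sum_{I} \frac{\operatorname{Imm}_{\chi_q^\lambda}(X_I)}{m_{q^2}(I)},
\end{equation*}
which is exactly \eqref{Littlewood3}. The special cases for $e_k$, $h_k$, $p_k$ correspond to $\lambda = (1^k)$, $\lambda = (k)$, and the power-sum specialization via Newton's identity. The main conceptual obstacle is not the computation itself but justifying the transfer of the classical Jacobi--Trudi identity to $\mathfrak{R}_n$: one needs the commutativity package (Theorem \ref{t:commutativity-alpha} together with the stronger MacMahon--Newton framework) to guarantee that all polynomial identities among $e_k, h_k, p_k$ in $\Lambda$ survive in the noncommutative ambient algebra $A_q(\operatorname{Mat}_n)$. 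Once this is in place, the proof is a short chain of substitutions.
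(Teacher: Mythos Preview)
Your proposal is correct and follows essentially the same route as the paper: identify $\alpha_k=e_k(\omega)$ via Vieta, transfer the dual Jacobi--Trudi identity to the commutative Bethe subalgebra $\mathfrak{R}_n$ so that $\mathbb{S}_\lambda(\omega)=\det(A)$, and then invoke Theorem~\ref{quantum-JT}; the paper treats the $p_k$ case exactly as you do, via the Newton identities of Theorem~\ref{Newton-iden}. Your explicit discussion of the commutativity justification (why classical identities among $e_k,h_k,p_k$ survive inside $A_q(\Mat_n)$) makes precise a point the paper leaves implicit, but there is no substantive difference in the argument.
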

\begin{proof}
By  Vieta's formulas,  for $1 \leq k \leq n$, we have that $\alpha_k=e_k(\omega_1,\dots,\omega_n)$, where $e_k$ are the elementary symmetric functions. So
$$e_k(\omega_1,\dots,\omega_n)=\sum_{I \subseteq [n]}{\det}_q(X_{I}),$$
where the sum runs over all increasing ordered multisets $I$ with $k$ elements.
Hence, we can reduce \eqref{Littlewood3} by the $q$-Goulden-Jackson identities  and the dual Jacobi-Trudi identity:
$$\mathbb{S}_{\lambda}(\omega_1,\dots,\omega_n)=\sum_{\mu}K_{\lambda^{T},\mu}^{-1}e_{\mu_1}\cdots e_{\mu_{\lambda_1}}=\sum_{I}\frac{\Imm_{\chi_q^{\lambda}}(X_I)}{m_{q^2}(I)}.$$
Finally, by the Newton's identities in theorem \ref{Newton-iden}, $\gamma_k=trX^{[k]}$ can be expressed as the same linear combination of $e_{\mu}(\omega_1,\dots,\omega_n), \mu \vdash k$ as $p_k(\omega_1,\dots,\omega_n)$.
\end{proof}

Note that $\alpha_k=0$, if $k>n$. The Bethe subalgebra $\mathfrak{R}_n$ of $A_q(\Mat_n)$ generated by $\alpha_k,\ 0 \leq k \leq n$ is a commutative algebra.  And $\mathfrak{R}_n$ is the algebra of coinvariants  for the conjugation coaction of  $\GL_q(n)$ 
on $A_q(\Mat_n)$, see \cite{DL}.

Consider the algebra homomorphism $\Phi: A_q(\Mat_n) \rightarrow \mathbb{C}[x_1,\ldots,x_n]$ defined as
\[
x_{ij}\mapsto 0, \quad i\neq j, \qquad
x_{ii}\mapsto x_i, \quad i=1,\ldots,n.
\]
Denote $\textbf{Sym}^{(n)}=\mathbb{C}[x_1,\ldots,x_n]^{\mathfrak{S}_n}$ the algebra of symmetric polynomials in $n$ variables.
\begin{theorem}\label{iso-sym} The restriction of $\Phi$:
  $$\Phi |_{\mathfrak{R}_n} :\mathfrak{R}_n\rightarrow \textbf{Sym}^{(n)}$$
  is an algebra isomorphism. And $\{ \sum_{I} \frac{\Imm_{\chi_q^{\lambda}}(X_I)}{m_{q^2}(I)}\mid \lambda\vdash n\}$ is a basis of $\mathfrak{R}_n$.
\end{theorem}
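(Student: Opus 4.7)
The plan is to reduce the theorem to the classical isomorphism $\mathbb{C}[y_1,\ldots,y_n] \cong \textbf{Sym}^{(n)}$ given by $y_k \mapsto e_k$. First I would check that $\Phi$ is a well-defined algebra homomorphism: after killing the off-diagonal generators, every RTT relation \eqref{e:reln} degenerates either to $0=0$ or to $x_i x_j = x_j x_i$. Under $\Phi$ the matrix $X$ becomes $\mathrm{diag}(x_1,\ldots,x_n)$, and for any increasing subset $I \subseteq [n]$ only the identity permutation survives in $\det_q(X_I)$, so $\Phi(\det_q(X_I)) = \prod_{i \in I} x_i$, whence
$$\Phi(\alpha_k) = \sum_{|I|=k}\prod_{i \in I} x_i = e_k(x_1,\ldots,x_n), \qquad 1 \le k \le n.$$
In particular $\Phi(\mathfrak{R}_n) \subseteq \textbf{Sym}^{(n)}$ and the restriction is surjective.

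For injectivity I would use the commutativity from Theorem \ref{t:commutativity-alpha}, which gives a surjective algebra homomorphism $\pi: \mathbb{C}[y_1,\ldots,y_n] \twoheadrightarrow \mathfrak{R}_n$, $y_k \mapsto \alpha_k$. The composite $\Phi \circ \pi$ is the classical isomorphism $y_k \mapsto e_k$, so both $\pi$ and $\Phi|_{\mathfrak{R}_n}$ are isomorphisms; in particular $\mathfrak{R}_n$ is a polynomial algebra freely generated by $\alpha_1,\ldots,\alpha_n$.

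For the basis assertion, I would invoke Theorem \ref{t:Litt3}, which identifies
$$\sum_{I}\frac{\Imm_{\chi_q^{\lambda}}(X_I)}{m_{q^2}(I)} = \mathbb{S}_{\lambda}(\omega_1,\ldots,\omega_n),$$
where $\omega_1,\ldots,\omega_n$ are the roots of $\mathrm{char}_q(X,t)$. Extending $\Phi$ coefficient-wise to $\mathfrak{R}_n[t]$ sends $\mathrm{char}_q(X,t) \mapsto \prod_{i=1}^n(t-x_i)$, so the root multisets match and $\Phi\bigl(\mathbb{S}_{\lambda}(\omega_1,\ldots,\omega_n)\bigr) = \mathbb{S}_{\lambda}(x_1,\ldots,x_n)$. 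Since the Schur polynomials in $n$ variables, indexed by partitions with at most $n$ parts, form a $\mathbb{C}$-basis of $\textbf{Sym}^{(n)}$, and $\Phi|_{\mathfrak{R}_n}$ is a linear isomorphism, the corresponding preimages form a basis of $\mathfrak{R}_n$.

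The step I expect to require the most care is the substitution $\omega_i \mapsto x_i$ under $\Phi$: the $\omega_i$ live only in an algebraic closure of $\mathbb{C}(q,\alpha_1,\ldots,\alpha_n)$, so the rigorous meaning is that $\mathbb{S}_{\lambda}(\omega_1,\ldots,\omega_n)$ is first re-expressed as a polynomial in the elementary symmetric functions of the roots (that is, in the $\alpha_k$) via the Jacobi--Trudi identity, after which $\Phi(\alpha_k)=e_k$ produces $\mathbb{S}_{\lambda}(x_1,\ldots,x_n)$ directly. Everything else is formal once Theorems \ref{t:commutativity-alpha}, \ref{quantum-JT}, and \ref{t:Litt3} are in hand.
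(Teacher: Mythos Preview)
Your proposal is correct and follows essentially the same route as the paper: compute $\Phi(\alpha_k)=e_k$, deduce surjectivity from the fundamental theorem of symmetric polynomials, and then use the quantum Littlewood correspondence III to transport the Schur basis of $\textbf{Sym}^{(n)}$ back to $\mathfrak{R}_n$. The one place where you add substance is the injectivity step via the factorization $\mathbb{C}[y_1,\ldots,y_n]\twoheadrightarrow\mathfrak{R}_n\to\textbf{Sym}^{(n)}$, which the paper's proof leaves implicit; your argument here is the standard and correct way to close that gap.
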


\begin{proof}
First it is direct that $\Phi(\alpha_k)=e_k(x_1,\ldots,x_n)$. So the map $\Phi |_{\mathfrak{R}_n}$ is an epimorphism by the fundamental theorem of symmetric polynomials.
By  the quantum Littlewood correspondence III and $\Phi|_{\mathfrak{R}_n}$ is an isomorphism, $\{ \sum_{I}\frac{\Imm_{\chi_q^{\lambda}}(X_I)}{m_{q^2}(I)}\}$ is a basis of $\mathfrak{R}_n$.
\end{proof}

The following theorem can be viewed as a generalization of the relations between  Schur functions and power sums functions in \cite{Li}.
\begin{theorem}\label{rela-SchurPower}
Let $\lambda \vdash r\leq n$ and $\omega_1,\dots,\omega_n$ be the solutions of $\text{char}_q(X,t)=0$ over the algebraic closure field of fraction $\mathbb{C}(q, \alpha_1,\ldots,\alpha_n)$, then
  $$\sum_{I}\frac{\Imm_{\chi_q^{\lambda}}(X_I)}{m_{q^2}(I)}=\frac{\Imm_{\chi^{\lambda}}(\Gamma_r)}{r!},$$
where the lower Hessenberg matrix  $\Gamma_r$ is
$$
\Gamma_r=\begin{pmatrix}
\gamma_1 & -1 &  & &  \\
\gamma_2 & \gamma_1 & -2 &  & \\
\gamma_3 & \gamma_2 & \gamma_1 & -3 & \\
\vdots & \vdots & \ddots & \ddots &  \ddots\\
\gamma_r & \gamma_{r-1} & \cdots & \cdots & \gamma_1
\end{pmatrix}.$$
\end{theorem}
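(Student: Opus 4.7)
The plan is to reduce the identity to a classical symmetric-function identity via the Bethe-algebra isomorphism of Theorem \ref{iso-sym}. Both sides of the statement lie in the commutative subalgebra $\mathfrak{R}_n \subset A_q(\Mat_n)$: the left-hand side by the quantum Jacobi--Trudi identity (Theorem \ref{quantum-JT}), and the right-hand side because the Newton identities (Theorem \ref{Newton-iden}) express each $\gamma_k = tr\,X^{[k]}$ as a polynomial in $\alpha_1,\ldots,\alpha_k$, so $\gamma_k \in \mathfrak{R}_n$. Applying $\Phi|_{\mathfrak{R}_n}\colon \mathfrak{R}_n \xrightarrow{\sim} \textbf{Sym}^{(n)}$, a direct computation with the diagonal matrix $\Phi(X)=\mathrm{diag}(x_1,\ldots,x_n)$ yields $\Phi(\gamma_k)=p_k(x_1,\ldots,x_n)$, while by Theorem \ref{t:Litt3} the left-hand side maps to $\mathbb{S}_\lambda(x_1,\ldots,x_n)$. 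Since $\Phi|_{\mathfrak{R}_n}$ is injective, the theorem is equivalent to the classical identity
$$r!\,\mathbb{S}_\lambda(x_1,\ldots,x_n)=\Imm_{\chi^\lambda}\bigl(\Gamma_r(p_1,\ldots,p_r)\bigr)$$
in $\textbf{Sym}^{(n)}$, which expresses a Schur polynomial as a normalized immanant of the Hessenberg matrix of power sums and appears classically in \cite{Li}.

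To establish this classical identity, I would combine the Frobenius character formula $r!\,\mathbb{S}_\lambda=\sum_{\sigma\in\Sym_r}\chi^\lambda(\sigma)\,p_{\mathrm{type}(\sigma)}$ with the expansion $\Imm_{\chi^\lambda}(\Gamma_r(\mathbf p))=\sum_\sigma\chi^\lambda(\sigma)\prod_i(\Gamma_r)_{\sigma(i),i}$. Only the Hessenberg permutations (those with $\sigma(i)\geq i-1$ for every $i$) contribute, and each such $\sigma$ factors uniquely into cycles supported on consecutive intervals of $[r]$. A Hessenberg permutation whose interval cycles have lengths $\mu_1,\ldots,\mu_\ell$ contributes $p_{\mu_1}\cdots p_{\mu_\ell}$ times a product of superdiagonal weights encoding the cycle boundaries; summing these weights over all interval placements of a fixed cycle type $\mu$ reproduces the conjugacy-class size $|C_\mu|=r!/z_\mu$, so the Frobenius expansion and the immanant expansion match term by term.

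The main obstacle is precisely this combinatorial bookkeeping: one has to verify that the signed superdiagonal entries $-1,-2,\ldots$ combine with the cycle signs to yield exactly the class sizes $r!/z_\mu$, with no residual sign or multiplicity error. A cleaner alternative is induction on $r$: expand $\Imm_{\chi^\lambda}(\Gamma_r)$ along the last column of $\Gamma_r$, whose only nonvanishing entries are $\gamma_1$ at $(r,r)$ and $-(r-1)$ at $(r-1,r)$, to obtain a Newton-type recursion, and then match this recursion against the restriction of $\chi^\lambda$ from $\Sym_r$ to $\Sym_{r-1}$ applied to the Frobenius formula for $\mathbb{S}_\lambda$. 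Either route isolates the entire $q$-content of the theorem in the isomorphism $\Phi|_{\mathfrak{R}_n}$, leaving behind a purely symmetric-function statement.
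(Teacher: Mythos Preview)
Your proposal is correct and follows essentially the same route as the paper: both sides lie in the commutative Bethe subalgebra $\mathfrak{R}_n$, and transporting along the isomorphism $\Phi|_{\mathfrak{R}_n}$ of Theorem~\ref{iso-sym} reduces the statement to Littlewood's classical identity $r!\,\mathbb{S}_\lambda=\Imm_{\chi^\lambda}(\mathfrak{P}_r)$ for the Hessenberg matrix of power sums, which the paper simply cites from \cite{Li}. The only minor differences are that the paper obtains $\Phi(\gamma_k)=p_k$ from the Newton identities (matching the recursion for $\alpha_k$ with that for $e_k$) rather than by your direct diagonal computation, and that the paper does not re-prove the classical identity, whereas you sketch a combinatorial argument for it; that extra sketch is not needed for the theorem.
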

\begin{proof}
By the Newton identities in Theorem \ref{Newton-iden} and Cramer's rule, we have that
\[
\alpha_{r}=\frac{\det(\Gamma_r)}{r!}.
\]
Moreover, the Schur function in \cite{Li} can be defined by
\[
r!\ \mathbb{S}_{\lambda}=\Imm_{\chi^{\lambda}}(\mathfrak{P}_r),
\]
where the matrix $\mathfrak{P}_r$ is  obtained by replacing  $\gamma_i$ with the power sums polynomial $p_i$ in matrix $\Gamma_r$. According to the isomorphism $\Phi|_{\mathfrak{R}_n}$ and  the quantum Goulden-Jackson identities, we have that
\ben
\bal
\sum_{I}\frac{\Imm_{\chi_q^{\lambda}}(X_I)}{m_{q^2}(I)}&=
\sum_{\mu}K_{\lambda^{T},\mu}^{-1}\alpha_{\mu_1}\cdots \alpha_{\mu_{\lambda_1}}
=\frac{\Imm_{\chi^{\lambda}}(\Gamma_r)}{r!}.
\eal
\een
\end{proof}

\bigskip
\centerline{\bf Acknowledgments}
\medskip
The work is supported in part by the National Natural Science Foundation of China (grant nos.
12001218 and 12171303), the Simons Foundation (grant no. MP-TSM-00002518),
and the Fundamental Research Funds for
 the Central Universities (grant nos. CCNU22QN002 and
CCNU24JC001).

\bibliographystyle{amsalpha}

\end{document}